\title[Optimal H\"older regularity of SLE]{Optimal H\"older exponent
  for the SLE path}
\author[Johansson]{Fredrik Johansson}
\address{Fredrik Johansson\\
Department of Mathematics\\
The Royal Institute of Technology\\
S -- 100 44 Stockholm\\
SWEDEN}
\thanks{Johansson is supported by grant KAW 2005.0098 from the Knut and Alice Wallenberg
Foundation.}
\email{frejo@math.kth.se}
\author[Lawler]{Gregory F. Lawler}
\address{Gregory F. Lawler\\ 
Department of Mathematics and Department of Statistics\\
University of Chicago\\
5734 S. University Avenue\\
Chicago, IL 60637, USA}
\thanks{Lawler is supported by National Science
Foundation  grant DMS-0734151.}
\email{lawler@math.uchicago.edu}
\newcommand{\imag}{\operatorname{Im} \,}
\newcommand{\real}{\operatorname{Re} \,}
\newcommand{\iy}{\infty}
\newcommand{\fr}{\frac}
\newcommand{\HH}{\mathbb{H}}
\newcommand{\DD}{\mathbb{D}}
\newcommand{\CC}{\mathbb{C}}
\newcommand{\NN}{\mathbb{N}}
\newcommand{\EE}{\mathbb{E}}
\newcommand{\E}{\mathbb{E}}
\newcommand{\PP}{\mathbb{P}}
\newcommand{\Prob}{\mathbb{P}}
\newcommand{\vp}{\varphi}
\newcommand{\hcap}{\operatorname{hcap}}
\newcommand{\diam}{\operatorname{diam}}
\newcommand{\height}{\operatorname{height}}
\newcommand{\dist}{\operatorname{dist}}
\newcommand{\SLE}{\operatorname{SLE}}
\def \p {\partial}
\def \phi {{\varphi}}
\newtheorem{Theorem}{Theorem}[section]
\newtheorem{Proposition}[Theorem]{Proposition}
\newtheorem{Corollary}[Theorem]{Corollary}
\newtheorem{Lemma}[Theorem]{Lemma}
\theoremstyle{definition}
\newtheorem{Definition}[Theorem]{Definition}
\theoremstyle{remark}
\newtheorem*{Remark}{Remark}
\begin{document}
\bibliographystyle{alpha}

\begin{abstract}
We prove an upper bound on the optimal H\"older exponent for
the chordal $\SLE$ path parameterized by capacity and thereby establish
the optimal exponent as conjectured by J. Lind. We also give
a new proof of the lower bound. Our proofs are based on the
sharp estimates of 
moments of the derivative of the inverse map.  In particular,
we improve an estimate of the second author.
\end{abstract}
\maketitle
\section{Introduction}
The Schramm-Loewner evolution, or $\SLE(\kappa)$, is a one-parameter
family of random fractal curves that was introduced by O. Schramm in 
\cite{Schramm_LERW} as a candidate for the scaling limit of the
loop-erased random walk. Since then, $\SLE$ has been shown to
describe the scaling limits of a number of discrete models from
statistical physics and to provide tools for their
rigorous understanding. The properties of $\SLE$ curves have been
studied by a number of authors. For instance, S. Rohde
and Schramm proved in \cite{Rohde_Schramm} the existence and
continuity of the path and gave an upper bound on the Hausdorff
dimension. V. Beffara \cite{Beffara_dimension} proved a lower bound on
Hausdorff dimension and thus showed that the dimension is almost
surely the minimum of $1+\kappa/8$ and $2$. J. Lind \cite{Lind}
improved the estimates by Rohde and Schramm and proved that the $\SLE(\kappa)$ path is almost surely
H\"older continuous. She conjectured that the H\"older exponent she obtained
is the optimal one, that is, that the exponent is the largest possible. In this paper we prove this conjecture. More precisely, we
prove the following theorem. Let $\kappa \ge 0$
and set
\[ 
\alpha_*=\alpha_*(\kappa) =    1 - \frac{\kappa} 
{24 + 2\kappa - 8 \sqrt{8 + \kappa}}, \quad  \alpha _0 =
\min\left\{\frac{1}{2},\, \alpha_* \right\}.
\] 
\begin{Theorem}  \label{main.thm} 
Let $\gamma(t)$ be the chordal $\SLE (\kappa)$ path parameterized by
half-plane capacity. With probability one the following
holds.
\begin{itemize}
\item
 $\gamma(t), \, t \in [0,1],$ 
is H\"older continuous of order $\alpha$ for 
$\alpha < \alpha_0$ and is not H\"older continuous 
of order $\alpha$ for $\alpha > \alpha_0.$ 
\item For every $0 < \epsilon < 1$,  $\gamma(t), \, t \in [\epsilon,1],$ 
is H\"older continuous of order $\alpha$ for 
$\alpha < \alpha_*$ and is not H\"older continuous 
of order $\alpha$ for $\alpha > \alpha_*.$ 
\end{itemize}
\end{Theorem}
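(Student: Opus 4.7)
The plan is to reduce both directions of the theorem to sharp two-sided moment estimates for $|f_t'(W_t + iy)|$, where $f_t = g_t^{-1}$ is the inverse Loewner map and $W_t$ is the driving Brownian motion. The key geometric input is the relation $|\gamma(t+\dt) - \gamma(t)| \asymp |f_t'(W_t + iy)| \cdot y$ with $y = \sqrt{\dt}$: the upper direction is essentially Koebe distortion applied to the image of a disk of radius $\sqrt{\dt}$ at $W_t$, and the matching lower direction uses that $|W_{t+\dt} - W_t| \ge c y$ with positive conditional probability. By scaling and the Markov property applied to the chordal Loewner chain, $\EE[|f_t'(W_t+iy)|^q] \asymp y^{-\zeta(q)}$ uniformly for $t$ in a compact subset of $(0,1]$, where $\zeta(q)$ is an explicit exponent computable via a standard one-point martingale for the reverse flow. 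The critical exponent $\alpha_*$ arises as the sharp cutoff in a first-moment optimization, namely the largest $\alpha$ for which $q(1-2\alpha) > \zeta(q) + 2$ admits a positive solution $q$.

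For the H\"older continuity direction on $[\epsilon,1]$ with $\alpha < \alpha_*$, I would apply the sharp upper moment bound at the optimal $q$, together with Markov's inequality, to control each of the $\sim y^{-2}$ adjacent dyadic increments at scale $y = 2^{-N}$. A union bound in $j$ and Borel--Cantelli in $N$ show that almost surely, for every $\alpha < \alpha_*$, every adjacent dyadic increment of $\gamma$ is at most $(y^2)^\alpha$; a standard chaining argument over dyadic scales then upgrades this to $|\gamma(s)-\gamma(t)| \le C|s-t|^\alpha$. On the full interval $[0,1]$ one inherits the additional cap $\alpha \le 1/2$ because near $t=0$ the map $f_0$ is the identity, so $\gamma(t)$ essentially tracks the Brownian driving term and cannot be H\"older of order strictly greater than $1/2$.

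The main new contribution is the upper bound on the optimal exponent: for $\alpha > \alpha_*$, the path $\gamma$ is almost surely not $\alpha$-H\"older. Fix such an $\alpha$, a small scale $y = 2^{-N}$, and for each dyadic time $t_j = j y^2 \in [\epsilon,1]$ define $A_j$ to be the event $|f_{t_j}'(W_{t_j}+iy)| \ge y^{2\alpha-1}$; on $A_j$ the increment $|\gamma(t_j+y^2) - \gamma(t_j)|$ exceeds $(y^2)^\alpha$ with positive conditional probability. A sharp \emph{lower} moment bound $\EE[|f_t'(W_t+iy)|^q] \ge c y^{-\zeta(q)}$ of matching strength, combined with one-point Paley--Zygmund, gives $\PP(A_j)$ large enough that $\EE[\#\{j : A_j\}] \to \infty$ precisely when $\alpha > \alpha_*$. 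A two-point moment estimate on $\EE[|f_s'(W_s+iy)| \cdot |f_t'(W_t+iy)|]$ then controls the second moment, and Paley--Zygmund applied to $\#\{j:A_j\}$ yields $\PP(\bigcup_j A_j) \ge c > 0$ uniformly in $N$. A Blumenthal-type zero-one argument at $t=\epsilon$, together with a countable intersection over a sequence $\alpha_n \downarrow \alpha_*$, upgrades this to the almost sure statement. The main obstacle is the sharp two-point moment estimate: the maps $f_s$ and $f_t$ are strongly correlated when $|s-t|$ is comparable to $y^2$, and a naive Cauchy--Schwarz would lose a factor of size $y^{-\zeta(q)}$; controlling the correlated regime using the Bessel-like structure of the reverse flow is exactly where the improved derivative moment estimates advertised in the abstract become indispensable.
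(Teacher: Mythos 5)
Your overall strategy matches the paper closely: reduce both directions to sharp moment estimates for $|\hat f_t'(iy)|$; prove H\"older continuity via Chebyshev, a union bound over dyadic times and scales, and Borel--Cantelli; prove failure of H\"older continuity via a Paley--Zygmund second-moment argument with a two-point correlation estimate for the derivative, upgraded by a zero-one law; and identify the critical exponent through the threshold condition $\lambda\beta + \zeta > 2$ (your sign convention for $\zeta$ is the opposite of the paper's, but the condition is the same). You also correctly flag the two-point moment estimate across nearby times as the crux, handled via the reverse flow. That is precisely how the paper organizes the proof.

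The one place where your route departs from the paper in a way that matters is the geometric lower bound on increments. You posit $|\gamma(t+\dt)-\gamma(t)| \gtrsim |\hat f_t'(iy)|\,y$ ``with positive conditional probability'' and then run Paley--Zygmund on the count of dyadic times where the \emph{derivative} is large, intending to convert afterward. As stated this is shaky: the endpoint increment $|\gamma(t+\dt)-\gamma(t)|$ is not controlled from below by the derivative alone, since the curve over $[t,t+\dt]$ may double back toward $\gamma(t)$, and tracking a conditional probability of a large endpoint increment through the second-moment computation adds real friction. The paper sidesteps both issues with a fully deterministic device (Proposition~\ref{lower_holder_bound}): because $\hcap\bigl(g_t(\gamma[t,t+y^2])\bigr)=ay^2$ while the diameter is $\lesssim y\,\vp(1/y)$ (Lemma~\ref{2.1} plus the weak H\"older-$1/2$ modulus), Lemma~\ref{hcap_upper_bound} forces the height to be $\gtrsim y/\vp(1/y)$; applying Koebe one-quarter at a high point of $g_t(\gamma[t,t+y^2])$ and transporting the derivative via Lemma~\ref{whitney} produces \emph{some} $t_1,t_2\in[t,t+y^2]$, not the endpoints, with $|\gamma(t_1)-\gamma(t_2)|\gtrsim |\hat f_t'(iy)|\,y\,\vp(1/y)^{-\delta}$. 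Since this conclusion is deterministic given the driving function, Paley--Zygmund can be run directly on the weighted sums $Y_n=\sum_j F(j,n)$ of Lemma~\ref{lowerderivative}, with no extra conditional randomness to manage. Your version might be salvageable by stopping at the first $j$ with $A_j$ and invoking the strong Markov property, but the deterministic intermediate-times trick is the structural idea you did not anticipate. Two smaller points: on the upper-bound side the increment is controlled by $v(t,y)=\int_0^y|\hat f_t'(ir)|\,dr$ (Proposition~\ref{UB}), not by the single-scale quantity $|\hat f_t'(iy)|\,y$, so one first needs the uniform-in-scale derivative bound (Proposition~\ref{uniform_upper}); and the cap $\alpha\le 1/2$ at $t=0$ is a capacity-parameterization effect rather than Brownian irregularity of the driver --- it follows from Proposition~\ref{lower_holder_bound} with $\hat f_0'\equiv 1$.
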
 
\begin{figure}[t]
\centering
\includegraphics[width=85 mm]{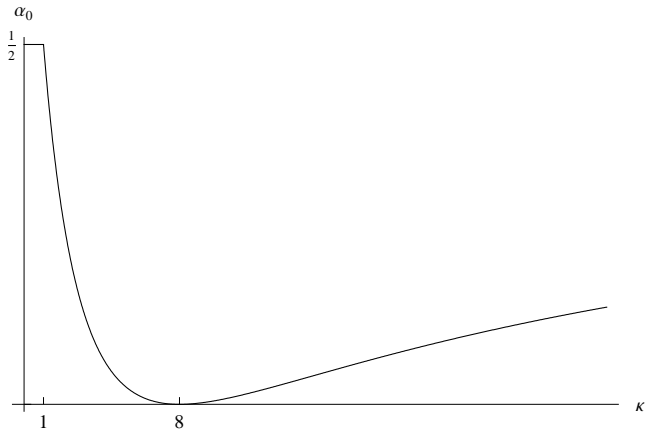}
\caption{Optimal H\"older exponent as a function of $\kappa$.}
\end{figure}
As mentioned, the lower bound on the optimal exponent was proved in \cite{Lind}. The upper
bound is new and we also give a new proof of the lower bound. The
phase transition of $\alpha_0(\kappa)$ at $\kappa=1$ is due to the
geometry of the path at the base; roughly, in the capacity
 parameterization the path is
H\"older-$1/2$ at $t=0$ for all $\kappa \ge 0$. 

The main tool needed is sharp estimates for the moments
of the derivative close to the preimage of the tip of the growing curve.
We will use the work in \cite{Lawler_multifractal} and also \cite{Johansson_Lawler_tip},
but we will have to extend one of these results in this paper.
  We
will use these results as a ``black box'', more or less, in the
main part of the paper and then give a derivation in the final section,
building on the argument in \cite{Lawler_multifractal}. To be
more specific, 
  we need to control the growth of the derivative as the
preimage of the tip is approached radially. To get the
lower bound on the exponent estimates on the derivative from above are
needed, and to get the
upper bound on the optimal exponent, one has to control second moments and time
correlations.

Our paper is organized as follows. In Section \ref{Preliminaries} we
give definitions and discuss some well-known results. In
Section \ref{Deterministic} we prove a number of results about
Loewner chains that are not particular to $\SLE$, but rather to
Loewner chains driven by functions which are weakly H\"older-1/2,
see \eqref{mod_of_cont} for a definition. Notably, we give
estimates on the modulus of continuity of the curve in terms of the
radial growth of the derivative. In the Section \ref{upsec}
we state the basic moment estimates and use these
  together with the results from Section
\ref{Deterministic} to prove Theorem \ref{main.thm}.  Finally,
Section \ref{momsec} proves the estimates on the derivative.
Here we review without proof the main results from  \cite{Lawler_multifractal} 
that we need, and then establish the new estimates.

\section{Preliminaries}\label{Preliminaries}
Let $U_t, \, t \ge 0$ be a continuous real-valued function. We
shall consider chordal Loewner chains $(g_t)$, that is, solutions to the chordal Loewner equation
\begin{equation}
\label{LODE}
\partial_t g_t=\fr{a}{g_t-U_t},\quad t>0, \quad g_0(z)=z,
\end{equation}
for $a >0$ fixed. We define the associated continuously growing hull
\[
K_t=\{z \in \HH: \tau(z) \le t\},
\]
where $\tau(z)$ is the blow-up time of \eqref{LODE}. For each $t >0$
the function $z \mapsto g_t(z)$ maps $H_t:=\HH \setminus K_t$
conformally onto $\HH$ and the inverse mapping $f_t:=g_t^{-1}$
satisfies the partial differential equation
\begin{equation}
\label{LPDE}
\partial_t f_t=-f'_t\fr{a}{z-U_t},\quad f_0(z)=z.
\end{equation}
Throughout the paper we will use the notation 
\begin{equation*}
\hat{f}_t(z):=f_t(U_t+z),
\end{equation*}
for $z \in \HH$. 

The time-reversed Loewner equation
\begin{equation}
\label{BLODE}
\partial_t F_t=-\fr{a}{F_t-U_{T-t}},\quad t \in (0,T],\quad F_0(z)=z
\end{equation}
is often useful to avoid dealing directly with (\ref{LPDE}): it is
easy to see that if $F$ is a solution \eqref{BLODE} and $f$ a solution to \eqref{LPDE} then
\[
F_T(z)=f_T(z).
\]
If there is a curve $\gamma(t)$ such that $H_t$ is the unbounded
connected component of $\HH \setminus \gamma[0,t]$ we say that the Loewner
chain $(g_t)$ is generated by a curve. This property is known \cite[Theorem
4.1]{Rohde_Schramm} to be equivalent to the existence of the radial limit
\begin{equation}\label{feb10}
\lim_{y \to 0}\hat f_t(iy)=:\gamma(t)
\end{equation}
for each $t >0$ together with continuity of $t \mapsto
\gamma(t)$. Loewner chains corresponding to driving functions with
strictly higher regularity than H\"older-1/2 are always generated by
a simple
curve; in fact H\"older-1/2 continuity with a sufficiently
small norm is sufficient to guarantee a simple
curve. Conversely, there are examples of Loewner chains corresponding to
H\"older-1/2 functions (with large norm) that are not generated by
a curve, see \cite{Marshall_Rohde} for these results.

In particular we will be interested in Schramm-Loewner evolution, 
$\SLE (\kappa)$, defined as the Loewner chain
corresponding to $a=2$ and $U_t=\sqrt{\kappa}B_t$, where $B$ is standard
Brownian motion and $\kappa \ge 0$. $\SLE$ is known to be generated by
a curve. We note that the $\SLE
(\kappa)$ path is simple if and only if $0 \le \kappa \le
4$ and space filling for $\kappa \ge 8$, see \cite{Rohde_Schramm} for
proofs of these facts. It may be noted that there is presently no known direct
proof of that $\SLE (8)$ is generated by a curve, see \cite{LSW1} for
an indrect proof.  

\begin{Definition}
A {\em (positive)
subpower function} is a continuous,
 non-decreasing function $\varphi:[0, \infty)
\to (0, \infty)$ 
that satisfies
\[
\lim_{x \to \infty} x^{-\nu} \varphi(x) = 0
\]  
for all $\nu >0$. 
\end{Definition}
\section{Deterministic results}\label{Deterministic}
In this section we prove a number of results about Loewner chains and
associated curves that are not special to $\SLE$. In particular we
consider Loewner chains corresponding to driving functions which are
weakly H\"older-1/2, see \eqref{mod_of_cont}.
 
For convenience we state the following well-known
result, see \cite{Pommerenke_bb} for a proof.
\begin{Lemma}[The Koebe distortion and one-quarter theorems]
Suppose $f: D \to \CC$ is a conformal map and set
$d=\dist(z, \partial D)$ for $z \in D$. Then
\begin{equation}
\label{distortion}
\fr{1-r}{(1+r)^3}|f'(z)| \le |f'(w)| \le \fr{1+r}{(1-r)^3}|f'(z)|, \quad |z-w| \le rd,
\end{equation}
and 
\begin{equation}
\label{quarter}
B(f(z), d|f'(z)|/4) \subset f(D),
\end{equation}
where $B(w,\rho)$ denotes the open disk of radius $\rho$ around $w$.
\end{Lemma}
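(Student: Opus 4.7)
The plan is to reduce both inequalities to the classical Koebe theorems for the Schlicht class $\mathcal{S}$ of univalent maps $h : \DD \to \CC$ with $h(0) = 0$ and $h'(0) = 1$, and then prove those via the area theorem and Bieberbach's coefficient bound $|a_2| \le 2$. First I would perform the standard affine normalization: since $B(z,d) \subset D$, the map $\zeta \mapsto f(z + d\zeta)$ is univalent on $\DD$, and subtracting $f(z)$ and dividing by $d f'(z)$ produces
\[
h(\zeta) := \frac{f(z + d\zeta) - f(z)}{d\, f'(z)} \in \mathcal{S}.
\]
Writing $w = z + d\zeta$, the bounds in \eqref{distortion} translate to $(1-r)/(1+r)^3 \le |h'(\zeta)| \le (1+r)/(1-r)^3$ for $|\zeta| \le r$, while \eqref{quarter} becomes $B(0,1/4) \subset h(\DD)$. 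So the lemma reduces to these two statements on $\mathcal{S}$.

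Second I would establish Bieberbach's inequality for the coefficient $a_2$ in $h(\zeta) = \zeta + a_2 \zeta^2 + \cdots$. Choosing a single-valued branch of the square root, $\phi(\zeta) = \sqrt{h(\zeta^2)}$ is an odd univalent map on $\DD$, so $\psi(\zeta) = 1/\phi(1/\zeta)$ is univalent on the exterior $\{|\zeta| > 1\}$ with Laurent expansion $\zeta - (a_2/2)\zeta^{-1} + \cdots$. Applying Green's formula to the image of $\{|\zeta| = \rho\}$ and letting $\rho \to 1^+$ gives the area theorem $\sum_{n \ge 1} n |b_n|^2 \le 1$, hence $|a_2| \le 2$.

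Third I would deduce both conclusions. For \eqref{quarter}: if $w_0 \notin h(\DD)$, then $h_{w_0}(\zeta) := w_0 h(\zeta)/(w_0 - h(\zeta))$ lies in $\mathcal{S}$ with second coefficient $a_2 + 1/w_0$, and applying $|a_2| \le 2$ to both $h$ and $h_{w_0}$ forces $|w_0| \ge 1/4$. For \eqref{distortion}: fix $\zeta_0 \in \DD$, precompose $h$ with the disk automorphism $T(\zeta) = (\zeta + \zeta_0)/(1 + \bar\zeta_0 \zeta)$, and renormalize so that the result lies in $\mathcal{S}$; reading off the second coefficient and applying $|a_2| \le 2$ yields the differential inequality
\[
\left| \zeta \frac{h''(\zeta)}{h'(\zeta)} - \frac{2 |\zeta|^2}{1 - |\zeta|^2} \right| \le \frac{4|\zeta|}{1 - |\zeta|^2},
\]
whose real part, integrated along the radius from $0$ to $\zeta$, sandwiches $\log |h'(\zeta)|$ between $\log((1-r)/(1+r)^3)$ and $\log((1+r)/(1-r)^3)$ at $|\zeta| = r$. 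The main obstacle is really only bookkeeping: all ingredients are classical, but the affine reduction and the ODE integration must be set up carefully to keep signs and constants straight.
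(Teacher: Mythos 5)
The paper does not prove this lemma at all; it simply records the classical Koebe distortion and one-quarter theorems ``for convenience'' and refers to Pommerenke's book for a proof. So there is no paper argument to compare against. That said, your proof sketch is correct and is the standard textbook derivation: normalize to the Schlicht class $\mathcal{S}$ via the affine change $\zeta \mapsto (f(z+d\zeta)-f(z))/(d f'(z))$; obtain $|a_2|\le 2$ from the area theorem applied to the odd square-root transform; deduce the quarter theorem by comparing $a_2$-coefficients of $h$ and $w_0 h/(w_0 - h)$; and deduce the distortion bounds by precomposing with a disk automorphism, reading off the resulting $a_2$-bound as a differential inequality for $\log|h'|$, and integrating along radii. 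All the reductions you describe (the translation of the disk-radius condition $|z-w|\le rd$ into $|\zeta|\le r$, and of the image-ball statement into $B(0,1/4)\subset h(\DD)$) are handled correctly, and the constants $(1\mp r)/(1\pm r)^3$ come out right from the integration. The only thing worth noting is that since the paper intends this as a black-box citation, supplying the full classical proof is more than the paper asks for, but it is a complete and accurate argument.
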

\begin{Lemma}
\label{whitney}
Let $S$ be the rectangle $S = \{x+iy: -1 < x < 1, 0 < y < 1\}$.
There exist $c,\alpha < \infty$ such that if 
$f$ is a conformal map defined on $2S$, 
$z,w \in S$ and $\imag z, \, \imag w
\ge 1/r$, then 
\begin{equation}   
|f'(z)| \le c r^{\alpha} |f'(w)|.
\end{equation}
\end{Lemma}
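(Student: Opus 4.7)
\emph{Proof plan.} I would prove the lemma by a Whitney-type chaining argument based on the Koebe distortion theorem \eqref{distortion}. The rough idea is that at any point which lies a fixed fraction of the way into $2S$ from its boundary, a single Koebe step changes $|f'|$ by only a bounded factor, and we can reach the interior from $z$ (or $w$) in $O(\log r)$ such steps.

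First I would connect $z$ and $w$ inside $S$ by a polygonal path $\Gamma$ consisting of three segments: a vertical segment from $z$ up to $z^{*} := \real z + i/2$, a horizontal segment across to $w^{*} := \real w + i/2$, and a vertical segment down to $w$. (If $\imag z \ge 1/2$ the first piece is omitted; similarly for the last.) The point of this choice is that every $\zeta \in \Gamma$ satisfies $|\real \zeta| < 1$ and $\imag \zeta \le 1/2$, so that $\dist(\zeta, \partial(2S)) \ge \imag \zeta$.

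The horizontal part is easy: on it, $\dist(\zeta,\partial(2S))\ge 1/2$ and the length is at most $2$, so covering the segment by a bounded number of overlapping disks on which \eqref{distortion} applies yields $|f'(z^{*})| \le C_1 |f'(w^{*})|$ for an absolute constant $C_1$. The main step is the vertical chaining from $z$ to $z^{*}$. Fix some $\epsilon \in (0,1)$ and set $z_k := \real z + i(1+\epsilon)^{k}\imag z$ for $k=0,1,\dots,K$, where $K$ is the smallest index with $(1+\epsilon)^{K}\imag z \ge 1/2$. Since $\imag z \ge 1/r$, we have $K \le \log(r/2)/\log(1+\epsilon) + 1 = O(\log r)$. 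At each step $|z_{k+1}-z_k| = \epsilon\, \imag z_k \le \epsilon\,\dist(z_k,\partial(2S))$, so \eqref{distortion} applied on the disk of radius $\epsilon\,\imag z_k$ about $z_k$ gives
\[
|f'(z_k)| \le \frac{(1+\epsilon)^{3}}{1-\epsilon}\,|f'(z_{k+1})| =: C(\epsilon)\,|f'(z_{k+1})|.
\]
Iterating through the $K$ steps yields $|f'(z)| \le C(\epsilon)^{K} |f'(z^{*})| \le c_2\, r^{\alpha_0}\,|f'(z^{*})|$ with $\alpha_0 = \log C(\epsilon)/\log(1+\epsilon) < \infty$. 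Applying the same argument to the segment $[w^{*},w]$ bounds $|f'(w^{*})|$ in terms of $|f'(w)|$. Combining the three pieces gives the lemma with $\alpha = 2\alpha_0$ and $c = c_2^{\,2}\,C_1$.

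I do not anticipate a real obstacle here. The content is simply the observation that although $|f'|$ can change rapidly as one approaches $\partial(2S)$, each Koebe distortion step in the chain contributes only a bounded multiplicative factor, while only logarithmically many steps in $r$ are needed to move $z$ (and $w$) into the region $\{\imag\zeta \ge 1/2\}$ where $|f'|$ is controlled uniformly. Taking the product of these bounded factors over $O(\log r)$ steps converts the logarithm into the polynomial factor $r^{\alpha}$ claimed.
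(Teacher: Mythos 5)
Your proposal is correct and is essentially the same argument as the paper's: both chain the Koebe distortion theorem \eqref{distortion} $O(\log r)$ times along a path from $z$ to $w$ that moves vertically into the interior, using the fact that each step multiplies $|f'|$ by a bounded factor so that the product is $r^{O(1)}$. The paper phrases the chaining via a Whitney decomposition of $S$ into dyadic rectangles (with comparability of $|f'|$ on each rectangle and a path crossing $O(\log r)$ of them), while you use geometrically spaced points along an explicit polygonal path, but this is a cosmetic difference in bookkeeping rather than a different route.
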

\begin{proof}  One way to prove this is to
take a conformal transformation of $2S$ onto the unit disk with
$f(z) = 0$ and using the distortion theorem.
A more direct approach is as follows.
We may assume that $\imag z = 1/r$. Take a Whitney decomposition of
$S$, that is, a partition of $S$ into dyadic rectangles $\{S_{j,k}\}$
where
\[S_{j,k}=\{x+iy:j2^{-k} \le x \le
(j+1)2^{-k}, \, 2^{-(k+1)} \le y \le 2^{-k}\},\]
for $j=0, \ldots, 2^k-1,$ and $k \in \NN$. Let $u,v$ be in the same
rectangle. Then by iterating
(\ref{distortion}) it follows that there exists
a constant $c_1$ (uniform for all rectangles; $12^5$ would work for instance) such that
\begin{equation}\label{comparable}
c_1^{-1} \le |f'(u)|/|f'(v)| \le c_1.
\end{equation}
Suppose that $z \in S_{j,k}$. Then $k \le \log r \le k+1$. 
It follows that there exists a path in $S$ (the hyperbolic geodesic,
for instance)
that connects $z$ to $w$ and
intersects at most $2 \lceil \log r \rceil+2$ rectangles. Hence, there is a constant $c_2$ such that by
iterating (\ref{comparable}) along the path at most $c_2(\log r+1)$ times
\begin{align*}
|f'(z)| & \le c_1^{c_2 (\log r+1)}|f'(w)| \\
&=c r^{\alpha}|f'(w)|,
\end{align*}
for $\alpha = c_2 \log c_1$ and $c=c_1^{c_2}$.
\end{proof}

A proof of the next lemma may be found in \cite{Lawler_cip}. For a set
$K$, we let
$\diam(K)$ denotes the diameter of $K$ and $\height(K):=\sup\{\imag z:
z \in K\}$. 
\begin{Lemma}
\label{2.1}
Suppose $K_t$ is the hull obtained by solving (\ref{LODE}) with
$U_t$ as driving function. Let $R(t)=\sqrt{t} + \sup_{0 \le s \le t}\{|U_s-U_0|\}$.
Then there exists a constant $c < \iy$ such that
\begin{equation}
c^{-1}R(t) \le \diam(K_t) \le c R(t).
\end{equation}
\end{Lemma}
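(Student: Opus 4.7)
My plan is to prove the two inequalities separately; by translation-covariance of \eqref{LODE} I may assume $U_0 = 0$, so $R(t) = \sqrt{t} + M$ with $M := \sup_{0 \le s \le t}|U_s|$.

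For the upper bound $\diam(K_t) \le c\,R(t)$ I would run a bootstrap on the Loewner ODE. Fix $z \in \HH$ with $|z| \ge CR(t)$ for a large absolute constant $C$, and suppose $|g_r(z) - U_r| \ge R(t)/2$ on some initial interval $[0,s]$. The integrated form of \eqref{LODE} then gives
\[
|g_s(z) - z| \le a\int_0^s \frac{dr}{|g_r(z) - U_r|} \le \frac{2as}{R(t)} \le 2a\,R(t),
\]
using $t \le R(t)^2$. Combined with $|z - U_s| \ge (C-1)R(t)$, this yields $|g_s(z) - U_s| \ge (C-1-2a)R(t) > R(t)/2$ provided $C$ is chosen large enough, closing the bootstrap. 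Hence $\tau(z) > t$, so $K_t \subset B(0, CR(t))$ and the upper bound follows.

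For the lower bound $\diam(K_t) \ge c^{-1}R(t)$ there are two contributions. The $\sqrt{t}$ piece is immediate from $\hcap(K_t) = at$ together with the general hull inequality $\hcap(K) \le c_1\,\diam(K)^2$, obtained by monotonicity of $\hcap$ applied to a half-disk enclosing $K$. The $M$ piece reduces to showing $|U_s| \le c_2\,\diam(K_s)$ for every $s \le t$. Here $U_s$ lies in the interval $[a_s, b_s] \subset \RR$ to which $g_s$ sends the two boundary arcs $\partial K_s \cap \HH$; taking the supremum over $s$ then gives $M \le c_2\,\diam(K_t)$, and combining with the $\sqrt{t}$ piece yields the claim.

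I expect the main obstacle to be the bound $|U_s| \le c_2\,\diam(K_s)$. The natural approach is to use the expansion $g_s(z) = z + as/z + O(1/z^2)$ at infinity to show that on a circle of radius comparable to $\diam(K_s)$ centred at $0$, $g_s$ is uniformly close to the identity; its image is then enclosed in a disk of comparable radius, and so in turn is the interval $[a_s, b_s]$. Making the $O(1/z^2)$ error quantitative and uniform in $s$ is the technical heart of the estimate, and is achieved by applying the Koebe distortion theorem to $g_s$ on a suitable annular region surrounding the hull.
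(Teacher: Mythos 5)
The paper does not actually prove this lemma; it cites \cite{Lawler_cip} for a proof, so there is no in-paper argument to compare against. Your proposal follows the standard textbook route and is correct in structure. The upper bound via the bootstrap on the ODE is clean and complete: with $|z| \ge CR(t)$ and the hypothesis $|g_r(z)-U_r| \ge R(t)/2$ on $[0,s]$, the integrated ODE gives $|g_s(z)-z| \le 2aR(t)$, and the triangle inequality then closes the bootstrap for $C > 3/2 + 2a$, so $K_t \subset B(0,CR(t))$. The $\sqrt{t}$ part of the lower bound via $\hcap(K_t) = at \le \diam(K_t)^2$ (enclosing in a half-disk centred at a point of $\overline{K_t}\cap\RR$) is also complete.

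The only place you hand-wave is the reduction of the $M$-piece: you correctly identify that what is needed is $|U_s - U_0| \le c_2\,\diam(K_s)$ and correctly locate $U_s$ in the base interval $[a_s,b_s]$, but the crucial quantitative input --- that $|g_s(z)-z|\le c\,\diam(K_s)$ for $z$ at distance comparable to $\diam(K_s)$ from the hull --- is only sketched. Two remarks on that step. First, a small simplification: after centring so $U_0 = 0$, one has $0 \in \overline{K_s}$ automatically (the hull grows from $U_0$), so $K_s \subset \overline{B(0,\diam K_s)}$ with no further work, and you only need the estimate at the two \emph{real} points $z = \pm 2\diam(K_s)$ rather than on a whole circle, since $[a_s,b_s] \subset [g_s(-2\diam K_s),\,g_s(2\diam K_s)]$. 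Second, the cleanest way to get the estimate is not really Koebe distortion but the Nevanlinna/Herglotz representation: $w - f_s(w)$ is analytic in $\HH$ with non-positive imaginary part, extends by reflection across $\RR\setminus[a_s,b_s]$, and vanishes at $\infty$, so $f_s(w) - w = \int_{a_s}^{b_s}(x-w)^{-1}\,d\mu_s(x)$ with $\mu_s \ge 0$ and $\mu_s(\RR) = \hcap(K_s) = as$; the bound $|g_s(z)-z| \le 3\,\diam(K_s)$ then follows by a short argument (this is the content of the lemma in \cite{Lawler_cip} that the paper is invoking). Either route works; your sketch is valid but you should be aware you are re-deriving a standard hull estimate and would need to execute it to have a full proof.
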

\begin{Lemma}
\label{hcap_upper_bound}
Let $K$ be a hull. There exists a constant $c < \iy$ such that
\begin{equation}
\hcap(K) \le c \, \diam(K) \, \height(K).
\end{equation}
\end{Lemma}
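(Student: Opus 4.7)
The plan is to use the probabilistic representation of the half-plane capacity,
\[
\hcap(K) \;=\; \lim_{y \to \infty} y\, \EE^{iy}\!\left[\imag B_\tau\right],
\]
where $B$ is a complex Brownian motion and $\tau := \inf\{t \ge 0 : B_t \in K \cup \RR\}$, and then to estimate the right-hand side by splitting according to whether the Brownian motion exits through $K$ or through $\RR$. Since $\imag B_\tau = 0$ on $\{B_\tau \in \RR\}$ and $\imag B_\tau \le \height(K)$ on $\{B_\tau \in K\}$, one obtains at once
\[
\EE^{iy}\!\left[\imag B_\tau\right] \;\le\; \height(K)\cdot \PP^{iy}\!\left[B_\tau \in K\right].
\]

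The main step is to bound $\PP^{iy}\!\left[B_\tau \in K\right] \le c\, \diam(K)/y$ for all $y$ sufficiently large. Since $\hcap$, $\diam$ and $\height$ are each invariant under horizontal translation, we may assume that $K \subset \overline{B(0, d)} \cap \overline{\HH}$, where $d = \diam(K)$. For $y > 2d$, on the event $\{B_\tau \in K\}$ the Brownian path must reach the half-circle $\partial B(0, 2d) \cap \HH$ before touching $\RR$, so it is enough to bound the probability of this larger event. Applying the Joukowski-type map $z \mapsto z + 4d^2/z$, which sends $\HH \setminus \overline{B(0, 2d)}$ conformally onto $\HH$ and carries the half-circle to the interval $[-4d, 4d] \subset \RR$, the probability becomes the harmonic measure of $[-4d, 4d]$ as seen from a point on the imaginary axis at height comparable to $y$, and this is $O(d/y)$ by a direct Poisson-kernel computation.

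Combining the two displays gives $y\, \EE^{iy}\!\left[\imag B_\tau\right] \le c\, \height(K)\, \diam(K)$ uniformly in $y$ large, and the claimed inequality follows on letting $y \to \infty$. The argument is a straightforward execution of standard harmonic-measure estimates in $\HH$, so I do not expect any serious obstacle; the only matter requiring minor care is the choice of the containing half-disk and the justification that the various reductions are lossless, both of which are routine.
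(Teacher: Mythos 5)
Your proof is correct, and while it starts from the same probabilistic representation of $\hcap$ as the paper, the key harmonic-measure estimate is done by a genuinely different route. The paper first uses monotonicity of $\hcap$ to replace $K$ by its bounding rectangle $R$ of width $d$ and height $h$ (after normalizing $d=1$), then covers $R$ by $O(h^{-1})$ discs of radius $2h$ centered on $\RR$, each of harmonic measure $O(h/y)$ from $iy$, so that the total is $O(1/y)$; you instead enclose $K$ in a single half-disc of radius $2d$ and compute the harmonic measure of the bounding half-circle directly via the Joukowski map, getting $O(d/y)$ in one step. Both yield the same bound; your version is shorter and avoids the paper's reduction to a rectangle and its case split on $h$ vs.\ $d$. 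One small point worth making explicit: the containment $K \subset \overline{B(0,d)}$ after a horizontal translation uses that $K$ meets $\RR$ (so that translating a boundary point to the origin puts all of $K$ within distance $d$ of $0$). For a hull floating strictly inside $\HH$ this fails --- and indeed the stated inequality itself can fail there --- but the paper's proof carries the same implicit assumption, and in the Loewner setting every hull $K_t$ contains the starting point on $\RR$, so this is the case that matters.
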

\begin{proof}
Let $d = \diam(K)$ and $h=\height(K)$.
We shall only consider the case when $h < d$ (the other case is easily
verified by considering the map $z \mapsto z + d^2/z$). By scaling,
translation invariance, and
monotonicity of $\hcap$ we
may assume that $d=1$ and that $K$ is contained in the rectangle
$R=\{z: |\real z| \le 1/2, \, 0 \le \imag z \le h\}$, so that $\hcap(K)
\le \hcap(R)$. It is known that  
\begin{equation*}
\hcap(R) = \lim_{y \to \iy} y\, \EE^{iy}(\imag \,B(T)),
\end{equation*}
where $B$ denotes a complex Brownian motion and $T$ denotes the
hitting time of $\partial (\HH \setminus R)$, see \cite{Lawler_cip}. Hence  
\begin{equation*}
\hcap(R) \le \lim_{y\to \iy} y \, \omega(iy, \partial R, \HH \setminus
R) \cdot h,
\end{equation*}
where $\omega$ denotes harmonic measure. Note that $R$ can be covered by
$O(h^{-1})$ discs of radius $2h$ centered on the real line. The
harmonic measure from $iy$ of any such disc is bounded from above by
the harmonic measure of the disc centered at the origin. Since
\begin{align*}
\omega \left(iy, \partial (2h \DD), \HH \setminus (2h\DD) \right) &=(4/\pi)\arctan
\left(2h/y \right)\\
&\le c h/y
\end{align*} 
for large $y$, the lemma follows from the maximum priciple.
\end{proof}

\begin{Lemma}  \label{lemma34} 
 Suppose $f_t$ satisfies \eqref{LPDE} and  $z = x+iy\in \HH$, then 
for  $s \geq 0$ 
\begin{equation}  \label{nov19.9} 
           e^{-5as/y^2}\, |f_t'(z)|  
 \leq  |f_{t+s}'(z)|\leq 
       e^{5as/y^2}\, |f_t'(z)|  .  
\end{equation} 
In particular, if $s \leq y^2$, 
\[            e^{-5a}\, |f_t'(z)|  
 \leq  |f_{t+s}'(z)|\leq 
       e^{5a}\, |f_t'(z)|  . \] 
Also, if $0 \leq s \leq y^2$,
\begin{equation}  \label{mar14.7}
 |f_{t+s}(z) - f_t(z)| \leq \frac{y}{5}
  \, [e^{5a} - 1] \, |f_t'(z)|.
\end{equation}
\end{Lemma}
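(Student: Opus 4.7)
The plan is to treat $f_t'(z)$ (really its logarithm) as the solution of an ODE in $t$ obtained by differentiating \eqref{LPDE} in $z$, and to use Koebe-type distortion bounds to control the bad term that appears.

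First I would differentiate \eqref{LPDE} in $z$ to obtain
\[
\partial_t f_t'(z) \;=\; -\frac{a\,f_t''(z)}{z-U_t} \;+\; \frac{a\,f_t'(z)}{(z-U_t)^2},
\]
and hence
\[
\partial_t \log f_t'(z) \;=\; -\frac{a}{z-U_t}\cdot\frac{f_t''(z)}{f_t'(z)} \;+\; \frac{a}{(z-U_t)^2}.
\]
Since $U_t$ is real and $\imag z = y$, the denominators satisfy $|z-U_t| \ge y$. The only nontrivial point is to bound $|f_t''(z)/f_t'(z)|$. Because $f_t$ is univalent on $\HH$ and $B(z,y) \subset \HH$, the rescaled map $w \mapsto f_t(z+yw)$ is univalent on the unit disk, so by Bieberbach's inequality (equivalently, by \eqref{distortion} applied on $B(z,y)$)
\[
\left|\frac{f_t''(z)}{f_t'(z)}\right| \;\le\; \frac{4}{y}.
\]
Combining these bounds gives
\[
\bigl|\partial_t \log f_t'(z)\bigr| \;\le\; \frac{4a}{y^2} + \frac{a}{y^2} \;=\; \frac{5a}{y^2}.
\]
Taking real parts and integrating from $t$ to $t+s$ yields
\[
\bigl|\log |f_{t+s}'(z)| - \log |f_t'(z)|\bigr| \;\le\; \frac{5as}{y^2},
\]
which is exactly \eqref{nov19.9}. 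The special case $s \le y^2$ is immediate.

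For \eqref{mar14.7} I would integrate \eqref{LPDE} directly:
\[
f_{t+s}(z) - f_t(z) \;=\; -\,a\int_0^s \frac{f_{t+u}'(z)}{z - U_{t+u}}\, du.
\]
Insert the upper bound from \eqref{nov19.9}, namely $|f_{t+u}'(z)| \le e^{5au/y^2}|f_t'(z)|$, together with $|z-U_{t+u}| \ge y$, to obtain
\[
|f_{t+s}(z) - f_t(z)| \;\le\; \frac{a\,|f_t'(z)|}{y}\int_0^s e^{5au/y^2}\,du \;=\; \frac{y\,|f_t'(z)|}{5}\bigl[e^{5as/y^2}-1\bigr],
\]
and for $0 \le s \le y^2$ this is at most $\tfrac{y}{5}[e^{5a}-1]|f_t'(z)|$.

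The only step that is not pure calculus is the distortion bound $|f_t''/f_t'| \le 4/y$, and that is a one-line consequence of Bieberbach applied to the univalent map $w \mapsto f_t(z+yw)$ on the unit disk; I expect no other obstacle.
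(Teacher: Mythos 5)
Your proof is correct and is essentially the paper's argument: differentiate \eqref{LPDE} in $z$, bound $|f_t''/f_t'|\le 4/y$ by Bieberbach on the disk $B(z,y)$, obtain $|\partial_t\log f_t'(z)|\le 5a/y^2$ and integrate; then bound $|\partial_t f_t(z)|$ by $a|f_t'(z)|/y$ and integrate using the just-proved derivative bound. The only cosmetic difference is that you pass to $\log f_t'$ explicitly (which spells out the step the paper summarizes as ``which implies \eqref{nov19.9}''), and you carry the parameter $a$ through rather than normalizing $a=1$.
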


\begin{proof} Without loss of generality assume that 
$a=1$.  Differentiating \eqref{LPDE} yields 
\[    \partial_t f_t'(z)    = -f_t''(z) \, \frac{1}{z-V_t} 
            + f_t'(z) \, \frac{1}{(z-V_t)^2} . \] 
Note that $|z-V_t| \geq y$.  
Applying Bieberbach's theorem (the $n=2$ case of 
the Bieberbach conjecture) to the disk 
of radius $y$ about $z$, we can see that 
\[  |f_t''(z)| \leq  4 \, y^{-1} \, |f_t'(z)|. \] 
and hence 
\[      |\partial_t f_t'(z)| \leq 5  \, y^{-2} \, |f_t'(z)|,\] 
which implies \eqref{nov19.9}.  Returning
to \eqref{LPDE}, we see that
\[    |\p_t f_t(z)| \leq |f_t'(z)| \, \frac{a}{\imag(z)}. \]
Using \eqref{nov19.9}, we see that
\begin{eqnarray*}
 |f_{t+s}(z) - f_t(z)| & \leq & \int_0^s
   | \p_s f_{t+s}(z)| \, ds \\
    & \leq & \frac{a |f_t'(z)|}
  {y}  \int_0^s
     e^{5as/y^2} \, ds = \frac{y}{5} \, (e^{5a} - 1)\,|f_t'(z)|.
\end{eqnarray*}
\end{proof}

We shall consider Loewner chains corresponding to functions
that are H\"older continuous of order $\alpha$ for any $\alpha
<1/2$. 
We say that $U$ is \emph{weakly H\"older-1/2} if there exists a subpower function
$\vp$ such that $r^{1/2}\vp(1/r)$ is a modulus of continuity for $U$,
that is,  
\begin{equation}
\label{mod_of_cont}
\sup_{|s| \le r}|U_{t+s}-U_t| \le  r^{1/2}\vp(1/r).
\end{equation}
By P. L\'evy's theorem the sample paths of Brownian motion are almost surely weakly
H\"older-1/2 with subpower function $c\sqrt{ \log (r)}$, $c > \sqrt 2$,
see \cite[Theorem I.2.7]{Revuz_Yor}. Therefore all results for Loewner chains
corresponding to functions that satisfy \eqref{mod_of_cont} hold for $\SLE(\kappa)$ with
probability one.

\begin{Lemma}
\label{ts}
There exist constants $c,\alpha <\infty$ such that
the following holds. 
Let $(g_t)$ be a Loewner chain corresponding to the continuous
function $U_t$. Let $s \in [0, y^2]$ for $y>0$. Then
\begin{equation}
\label{ts2}
|f'_{t+s}(U_{t+s}+iy)| \le cM^{\alpha}|f'_{t
+s}(U_t+iy)|,
\end{equation}
where $M=\max\{|U_{t+s}-U_t|/y,1\}$. In particular, if $U_t$ satisfies
\eqref{mod_of_cont}, then there exists a subpower
fuction $\vp$ such that
for all $t$ and all $s \in [0,y^2]$, 
\[
|f'_{t+s}(U_{t+s}+iy)| \le \vp(1/y) |f'_{t
+s}(U_t+iy)|.
\]
\end{Lemma}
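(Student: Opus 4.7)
The plan is to apply Lemma \ref{whitney} to a rescaled and translated version of $f_{t+s}$ that brings the two points into the fundamental rectangle $S$, and then to use the hypothesis \eqref{mod_of_cont} to convert the resulting factor $M^\alpha$ into a subpower function of $1/y$. For the first step, set $d = |U_{t+s}-U_t|$, $c_0 = (U_t + U_{t+s})/2$, $\lambda = 2(d+y)$, and consider the auxiliary map $g(z) = f_{t+s}(c_0 + \lambda z)$. Since $c_0 + \lambda z \in \HH$ whenever $\imag z > 0$ and $f_{t+s}$ is conformal on all of $\HH$, $g$ is conformal on the doubled rectangle $2S$. The preimages of $U_t + iy$ and $U_{t+s} + iy$ are the points $z_\pm = \pm d/(2\lambda) + iy/\lambda$, both lying in $S$ since $|\real z_\pm| \le 1/4$ and $\imag z_\pm = y/\lambda \le 1/2$. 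Thus the hypotheses of Lemma \ref{whitney} are met with $r = \lambda/y = 2 + 2d/y$, which is bounded by $4M$ regardless of whether $d \le y$ or $d > y$. Applying that lemma (in either order of the two points) and cancelling the chain-rule factor $\lambda$ from both sides yields \eqref{ts2}, the constant $c$ absorbing $4^\alpha$.

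For the second assertion, under \eqref{mod_of_cont} and $s \in [0, y^2]$ one has
\[
\frac{|U_{t+s} - U_t|}{y} \le \frac{s^{1/2}\vp(1/s)}{y} = \frac{1}{y}\cdot\frac{\vp(u)}{\sqrt{u}}, \qquad u = 1/s \ge 1/y^2.
\]
Since $\vp$ is subpower, $\vp(u)/\sqrt u$ decays faster than any negative power of $u$; explicitly, for every $\epsilon \in (0, 1/2)$ there is $C_\epsilon$ with $\vp(u)/\sqrt u \le C_\epsilon u^{-(1/2-\epsilon)}$. Taking the supremum over $u \ge 1/y^2$ and multiplying by $1/y$ then gives $M \le 1 + C_\epsilon y^{-2\epsilon}$ for every $\epsilon > 0$, from which one builds a single subpower function $\tilde\vp$ of $1/y$ with $M \le \tilde\vp(1/y)$, for instance $\tilde\vp(v) = 1 + v\sup_{u \ge v^2}\vp(u)/\sqrt u$. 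Raising to the power $\alpha$ preserves the subpower property, which together with the first part gives the result.

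The main obstacle is precisely this last step: one cannot simply replace $M$ by $\vp(1/y)$ for the originally given $\vp$, because the supremum over $s \in (0, y^2]$ forces $\vp$ to be evaluated at arbitrarily large arguments. It is the subpower decay of $\vp(u)/\sqrt u$ that turns the estimate into a subpower function of $1/y$, and it is essential here that the exponent $1/2$ on $r$ in \eqref{mod_of_cont} matches the $\sqrt u$ coming from the change of variables $u = 1/s$, so that the excess factor of $1/y$ is exactly compensated by the constraint $s \le y^2$.
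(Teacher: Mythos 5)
Your first half is a careful, correct implementation of what the paper does in one sentence: rescale so that both points land in the rectangle $S$, then apply Lemma~\ref{whitney}. Your explicit normalization $\lambda = 2(|U_{t+s}-U_t|+y)$ (rather than the paper's "rescale by $|U_{t+s}-U_t|$", which taken literally pushes the points above $S$ when $|U_{t+s}-U_t|<y$) handles the two regimes uniformly and gives $r = \lambda/y \le 4M$, as needed.

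The second half is also correct but goes around the long way, and the "main obstacle" you diagnose is self-inflicted. You invoked \eqref{mod_of_cont} only pointwise at $r = s$, which forces $\vp$ to be evaluated at the unbounded argument $1/s$ and makes the $\vp(u)/\sqrt u$ analysis necessary. But \eqref{mod_of_cont} is a \emph{supremum} bound: applying it once with $r = y^2$ gives, simultaneously for every $s \in [0,y^2]$,
\[
|U_{t+s}-U_t| \le \sup_{|s'|\le y^2}|U_{t+s'}-U_t| \le y\,\vp(1/y^2),
\]
so $M \le \max\{\vp(1/y^2),1\}$, and one may simply take the new subpower function to be $c\max\{\vp(x^2),1\}^\alpha$. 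In particular no exponent-matching between the $1/2$ in \eqref{mod_of_cont} and the $\sqrt u$ from $u=1/s$ is required; that constraint appears only because of the weaker pointwise reading. Your derivation does close the gap, so this is not a flaw in logic, but note a small wrinkle it introduces: the explicit candidate $\tilde\vp(v) = 1 + v\sup_{u\ge v^2}\vp(u)/\sqrt u$ is not obviously non-decreasing, as the paper's definition of subpower function requires, and would have to be replaced by a running maximum -- an issue that simply does not arise with the direct route.
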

\begin{proof}
We first rescale by $|U_{t+s}-U_t|$, and then apply Lemma
\ref{whitney} with $r=\max\{|U_{t+s}-U_t|/y,1\}$ to get the conclusion.
\end{proof}
The last two lemmas immediately imply the
following result, which we record as a lemma. This essentially says
that for weakly H\"older-1/2 Loewner chains, it is enough to
consider the derivative at dyadic times.

\begin{Lemma}\label{dyadic}
Let $(g_t)$ be the Loewner chain corresponding to $U_t$ satisfying
(\ref{mod_of_cont}). Suppose that there exist constants $c$ and
$\beta$ such that for all $n \ge 1$
\begin{equation}
\label{dyadic_ass}
|\hat{f}'_{t_k}(i2^{-n})| \le c 2^{n \beta},
\end{equation}
where $t_k=k 2^{-2n}, \, k=0,1,\ldots, 2^{2n}$. Then
for every $\beta_1 > \beta$,  there exists a constant
$c_1 < \infty$   such that
\begin{equation*}
|\hat{f}'_{t}(i2^{-n})| \le c_1 2^{n \beta_1},
\end{equation*}
for $t \in [0,1]$.
\end{Lemma}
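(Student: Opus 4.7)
The plan is to deduce the lemma directly from Lemmas \ref{lemma34} and \ref{ts}, which together let us transfer the derivative bound from a dyadic grid point to an arbitrary time, at the cost of a subpower function factor. The dyadic grid has spacing $2^{-2n}$, which is exactly the square of the scale $y=2^{-n}$ at which we evaluate $\hat f'$, so both lemmas can be applied simultaneously.

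Concretely, given $t\in[0,1]$, I would let $k$ be the integer with $t\in[t_k,t_{k+1})$ and write $s=t-t_k\in[0,2^{-2n}]$. Setting $y=2^{-n}$, so that $s\le y^2$, I split the quantity $|\hat f'_t(i2^{-n})|=|f'_t(U_t+iy)|$ into two factors by inserting $U_{t_k}$: first, Lemma \ref{ts} gives
\[
|f'_t(U_t+iy)| \le \varphi(1/y)\,|f'_t(U_{t_k}+iy)|
\]
for some subpower function $\varphi$ depending only on the weak H\"older-$1/2$ modulus of $U$. Second, since $s\le y^2$, Lemma \ref{lemma34} applied at the base point $U_{t_k}+iy$ (whose imaginary part is $y$) yields
\[
|f'_t(U_{t_k}+iy)| \le e^{5a}\,|f'_{t_k}(U_{t_k}+iy)| = e^{5a}\,|\hat f'_{t_k}(i2^{-n})|.
\]
Combining these with the dyadic hypothesis \eqref{dyadic_ass} gives
\[
|\hat f'_t(i2^{-n})| \le c\,e^{5a}\,\varphi(2^n)\,2^{n\beta}.
\]

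Finally, since $\varphi$ is a subpower function, for any $\beta_1>\beta$ there exists $c_1<\infty$ with $\varphi(2^n)\le c_1 2^{n(\beta_1-\beta)}$ for all $n\ge 1$, producing the claimed bound $|\hat f'_t(i2^{-n})|\le c_1 2^{n\beta_1}$. There is no real obstacle here: the content of the lemma is that the previous two lemmas combine cleanly because the dyadic time spacing was chosen to be $y^2$, which is the natural time scale at which the derivative of the Loewner flow changes only by a bounded factor, while the weakly H\"older-$1/2$ hypothesis on $U$ absorbs the horizontal displacement of the base point into a subpower correction.
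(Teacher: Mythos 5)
Your proof is correct and follows exactly the route the paper intends: the paper states the lemma directly after Lemmas \ref{lemma34} and \ref{ts} with the remark that these two lemmas ``immediately imply'' it, and your argument is precisely that combination — Lemma \ref{ts} to shift the base point from $U_t$ to $U_{t_k}$ at cost of a subpower factor, Lemma \ref{lemma34} to shift the time from $t$ to $t_k$ at cost of $e^{5a}$, and the subpower property to absorb $\varphi(2^n)$ into $2^{n(\beta_1-\beta)}$.
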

%\subsection{Upper bound}
Let $(g_t)$ be the Loewner chain corresponding to
a function $U_t$
satisfying (\ref{mod_of_cont}) that is generated
by a curve $\gamma(t)$. We want to estimate the modulus of continuity
of $\gamma$. The following quantity will be useful
\begin{equation}
v(t,y):=\int_0^y|f'_t(U_t+ir)|\, dr, \quad y > 0.
\end{equation}
The geometrical interpretation is of course the length (if it exists)
of the image of the segment $[U_t, U_t+iy]$ under $f_t$. For a given $t$, the limit
\begin{equation}\label{radial_limit}
\gamma(t)=\lim_{y \to 0+}f_t(U_t+iy)
\end{equation}
exists if $v(t,y)$ is finite
for some $y > 0$. By integration we have 
\[
|\gamma(t)-f_t(U_t+iy)| \le v(t,y).
\]
Using the Koebe one-quarter theorem, we can see that
\begin{equation}  \label{mar14.5}
           v(t,y) \geq  \, y \, |\hat f_t'(iy)|/4. 
\end{equation} 
The next result shows that Loewner chains corresponding to weakly
H\"older-1/2 functions are generated by a curve if $v(t,y)$ decays
polynomially in $y$. We also get an estimate of the modulus of
continuity of the curve.

\begin{Proposition}
\label{UB}
Let $(g_t)$ be the Loewner chain corresponding to $U_t$ satisfying
(\ref{mod_of_cont}). 
Then there exists a subpower function $\vp$ such
that if $0 \leq t \leq t +s \leq 1$ and
 $s \in [0, y^2]$
\begin{equation} \label{march15.1}
|\gamma(t+s)-\gamma(t)| \le  \vp(1/y)\left[ v(t+s, y)+v(t, y)\right].
\end{equation}
\end{Proposition}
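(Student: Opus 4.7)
The plan is to prove \eqref{march15.1} by the triangle inequality through the two auxiliary points $f_{t+s}(U_{t+s}+iy)$ and $f_t(U_t+iy)$, so that
\[
|\gamma(t+s)-\gamma(t)| \le |\gamma(t+s)-f_{t+s}(U_{t+s}+iy)| + |f_{t+s}(U_{t+s}+iy)-f_t(U_t+iy)| + |f_t(U_t+iy)-\gamma(t)|.
\]
The outer two terms are bounded directly by $v(t+s,y)$ and $v(t,y)$ respectively: by the existence of the radial limit \eqref{feb10} one may write $\gamma(\tau)-f_\tau(U_\tau+iy)=-i\int_0^y \hat f_\tau'(ir)\,dr$, and moving absolute values inside the integral reproduces the definition of $v(\tau,y)$.

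The cross term $|f_{t+s}(U_{t+s}+iy)-f_t(U_t+iy)|$ is the substantive part, and I would split it via the auxiliary point $f_{t+s}(U_t+iy)$ into a time-change piece and a space-change piece. For the time-change piece $|f_{t+s}(U_t+iy)-f_t(U_t+iy)|$, since $s\le y^2=(\imag(U_t+iy))^2$, estimate \eqref{mar14.7} of Lemma \ref{lemma34} applied at $z=U_t+iy$ bounds it by $\tfrac{y}{5}(e^{5a}-1)\,|f_t'(U_t+iy)|$; combined with the Koebe lower bound \eqref{mar14.5} this is at most a constant multiple of $v(t,y)$. For the space-change piece $|f_{t+s}(U_{t+s}+iy)-f_{t+s}(U_t+iy)|$, I would integrate $|f_{t+s}'|$ along the horizontal segment at height $y$ from $U_t+iy$ to $U_{t+s}+iy$. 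By \eqref{mod_of_cont} with $r=y^2$ the length of this segment is at most $y\,\varphi(1/y^2)$, and along it Lemma \ref{whitney} (equivalently, the rescaling argument of Lemma \ref{ts}) compares $|f_{t+s}'(u+iy)|$ uniformly with $|f_{t+s}'(U_{t+s}+iy)|$ up to a subpower factor in $1/y$. A final application of \eqref{mar14.5} turns $y\,|f_{t+s}'(U_{t+s}+iy)|$ into at most $4v(t+s,y)$, so the space-change piece is bounded by a subpower-in-$1/y$ multiple of $v(t+s,y)$. Summing all four contributions produces \eqref{march15.1} with some subpower function.

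The main obstacle is the space-change piece, because $|U_{t+s}-U_t|$ may well exceed $y$ and then the horizontal segment crosses many Koebe-distortion-scale neighborhoods, so a one-shot distortion estimate is insufficient. The cure is the observation that $|U_{t+s}-U_t|/y \le \varphi(1/y^2)$ is itself a subpower function of $1/y$, since the subpower defining property $x^{-\nu}\varphi(x)\to 0$ at $x=1/y^2$ gives $y^{2\nu}\varphi(1/y^2)\to 0$ and hence $y^{\nu'}\varphi(1/y^2)\to 0$ for every $\nu'>0$. Chaining Koebe distortion across the $O(\log\varphi(1/y^2))$ Whitney boxes of side $\sim y$ that tile the segment then produces a comparison factor of the form $c_1^{\log M}=M^{\log c_1}$ with $M=|U_{t+s}-U_t|/y$, still subpower in $1/y$; this is precisely the content already packaged in Lemmas \ref{whitney} and \ref{ts}, and it is what allows both cross-term contributions to be absorbed into a single subpower prefactor in \eqref{march15.1}.
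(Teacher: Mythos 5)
Your proof is correct and follows essentially the same route as the paper: the same four-point decomposition through $f_{t+s}(U_{t+s}+iy)$, $f_{t+s}(U_t+iy)$, and $f_t(U_t+iy)$; the same use of \eqref{mar14.7} plus \eqref{mar14.5} for the time-change piece; and the same Whitney-chaining/Lemma~\ref{ts} argument plus \eqref{mod_of_cont} plus \eqref{mar14.5} for the space-change piece. Your extra remarks, that the derivative comparison must hold uniformly along the horizontal segment (not merely at its endpoints) and that $\vp(1/y^2)$ is again subpower in $1/y$, are exactly the points the paper leaves implicit, so this is a careful rendering of the same argument rather than a different one.
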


\begin{Remark}  We have not assumed existence of the
curve in this proposition.  If $v(t+s,y),
v(t,y) < \infty$, then we know that radial limit \eqref{feb10}
exists, so we can write $\gamma(t), \gamma(t+s)$.  If
one of the radial limits does not exist we can define
$\gamma$ any way that we want since the right
hand side of \eqref{march15.1} is infinite.
\end{Remark}

\begin{proof}
We start by writing
\[
|\gamma(t+s)-\gamma(t)| \le |\gamma(t+s)-f_{t+s}(U_{t+s}+iy)| +
|\gamma(t)-f_t(U_t+iy)|\]
\[ 
 +|f_{t+s}(U_{t+s}+iy)- f_{t+s}(U_t + iy)|
   + |f_{t+s}(U_t + iy) - f_t(U_t+iy)|. 
\]
We have to estimate the last two terms. Note that
\[ |f_{t+s}(U_{t+s}+iy)- f_{t+s}(U_t + iy)|
     \leq |U_{t+s} - U_t| \, \max |f_{t+s}'(w)| , \]
where the maximum is over all $w$ on the line segment
connecting $U_{t+s} + iy$ and $U_t + iy$.  By
Lemma \ref{ts} using assumption \eqref{mod_of_cont} and then by
%\ref{whitney} and
\eqref{mar14.5}, we see that
\begin{align*}      
|f_{t+s}(U_{t+s}+iy)- f_{t+s}(U_t + iy)| &\leq c \, \vp(1/y) \, y \, |
f_{t+s}'(U_{t+s}+iy) | \\
& \leq
 c \, \vp(1/y) \, v(t+s,y).
\end{align*}
By \eqref{mar14.7} and \eqref{mar14.5}
\begin{align*}
 |f_{t+s}(U_t + iy) - f_t(U_t+iy)|
  & \leq  \frac{y}5 \,[e^{5a}-1] \, |f_t'(U_t + iy)| \\
  & \leq  c \, [e^{5a} - 1 ] \, v(t,y). 
\end{align*}
\end{proof}
\begin{Proposition}
\label{lower_holder_bound}
Let $(g_t)$ be the Loewner chain corresponding to
$U_t$ satisfying (\ref{mod_of_cont}). Suppose that $(g_t)$ is
generated by a curve $\gamma(t)$. Then for each $t$,
there exist $t_1,t_2 \in [t, t+y^2]$, and constants $c, \delta > 0$ such that 
\begin{equation*}
|\gamma(t_1)-\gamma(t_2)| \ge c \, 
|\hat{f}'_t\left(iy\right)| y \vp\left(1/y\right)^{-\delta}.
\end{equation*}
\end{Proposition}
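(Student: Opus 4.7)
The plan is to work in the $g_t$-image. The shifted Loewner chain driven by $\tilde U_s := U_{t+s}$ generates hulls $\tilde K_s := g_t(K_{t+s}\setminus K_t)$ of half-plane capacity $\hcap(\tilde K_{y^2})=ay^2$ and, since $(g_t)$ is generated by a curve, a continuous curve $\tilde\gamma(s) := g_t(\gamma(t+s))$. Lemma \ref{2.1} together with \eqref{mod_of_cont} gives
\[
\diam(\tilde K_{y^2}) \le c\bigl(y+\sup_{0\le s\le y^2}|U_{t+s}-U_t|\bigr) \le c\,y\,\psi(1/y),
\]
where $\psi(x):=1+\vp(x^2)$ is again a subpower function. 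Inserting this into Lemma \ref{hcap_upper_bound} forces
\[
\height(\tilde K_{y^2}) \ge \frac{\hcap(\tilde K_{y^2})}{c\,\diam(\tilde K_{y^2})} \ge \frac{y}{c'\,\psi(1/y)}.
\]
Every bounded component of $\HH\setminus\tilde\gamma[0,y^2]$ has its upper boundary on the curve itself (the remainder of its boundary sits in $\R$), so the height of $\tilde K_{y^2}$ is actually attained along $\tilde\gamma$: there exists $s_0\in[0,y^2]$ with $w_0:=\tilde\gamma(s_0)$ satisfying $\imag w_0 \ge y/(c'\psi(1/y))$.

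Next I pull this height back through $f_t$. The disk $B:=B(w_0,\imag w_0/2)$ is compactly contained in $\HH$, so by Koebe distortion $|f_t'|$ is comparable to $|f_t'(w_0)|$ on $B$, and by Koebe's one-quarter theorem $f_t(B)\supset B(f_t(w_0),\rho)$ with $\rho = c\,|f_t'(w_0)|\,\imag w_0$. Since $B(f_t(w_0),\rho)\subset f_t(B)\subset H_t$ while $\gamma(t)\in\partial H_t$, the point $\gamma(t)$ lies outside $B(f_t(w_0),\rho)$. By continuity the curve $s\mapsto\gamma(t+s)=f_t(\tilde\gamma(s))$, which starts outside $B(f_t(w_0),\rho)$ at $s=0$ and reaches $f_t(w_0)$ at $s=s_0$, must exit $\partial B(f_t(w_0),\rho)$ at some intermediate time $t+s_1$ with $s_1\in[0,s_0]$. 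Taking $t_1:=t+s_1$ and $t_2:=t+s_0$ yields $|\gamma(t_1)-\gamma(t_2)| \ge \rho = c\,|f_t'(w_0)|\,\imag w_0$.

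Finally I compare $|f_t'(w_0)|$ to $|\hat f_t'(iy)|=|f_t'(U_t+iy)|$. After translating by $U_t$ and rescaling by a fixed multiple of $y\psi(1/y)$, both $U_t+iy$ and $w_0$ sit in the rectangle $S$ of Lemma \ref{whitney}, with imaginary parts bounded below by $1/r'$ where $r'=O(\psi(1/y)^2)$. Applying Lemma \ref{whitney} in both directions yields $|f_t'(w_0)| \ge c\,\psi(1/y)^{-2\alpha}\,|\hat f_t'(iy)|$, so together with the lower bound on $\imag w_0$,
\[
|\gamma(t_1)-\gamma(t_2)| \ge c\,|\hat f_t'(iy)|\,y\,\psi(1/y)^{-(2\alpha+1)}.
\]
Replacing $\vp$ by $\max(\vp,\psi)$, still a subpower function, gives the claim with $\delta=2\alpha+1$.

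The step I expect to demand the most care is the first: one has to convert the capacity identity $\hcap(\tilde K_{y^2})=ay^2$, together with the nearly-linear diameter upper bound $y\,\psi(1/y)$, into a genuine height lower bound attained by the curve $\tilde\gamma$ (not just by the abstract hull) via Lemma \ref{hcap_upper_bound}, and then verify that this height $\asymp y/\vp(1/y)$ survives both the Koebe pull-back and the Whitney-chain distortion coming from Lemma \ref{whitney}, which only cost polynomial powers of $\vp(1/y)$.
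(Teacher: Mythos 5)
Your proof is correct and follows essentially the same route as the paper's: bound the diameter and height of $g_t(\gamma[t,t+y^2])$ via Lemmas \ref{2.1} and \ref{hcap_upper_bound}, compare $|f_t'|$ at a high point of the hull to $|\hat f_t'(iy)|$ via Lemma \ref{whitney} after rescaling, and push a distance estimate through the Koebe one-quarter theorem. The only cosmetic difference is that the paper picks a second point $w$ on the curve at distance $\rho_h/2$ from the high point and applies Koebe directly to that pair, whereas you use the endpoint $\gamma(t)$ together with an intermediate-value argument to find where the curve exits the Koebe ball; both yield the same exponent $\delta = 2\alpha+1$.
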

\begin{proof}
Let $\beta:=g_t(\gamma[t, t+y^2])$. Then $\beta$ is the curve obtained by
solving (\ref{LODE}) with $U_{t+r}, \, r \in [0 , y^2],$ as driving
function. Hence $\hcap(\beta) = a y^2$, and by Lemma \ref{2.1}
together with the assumption (\ref{mod_of_cont}) on $U$,
\begin{equation}
\label{diam_beta}
\rho_d :=\diam(\beta) \le y\,\vp\left(1/y\right).
\end{equation}
Next, we combine (\ref{diam_beta}) with Lemma \ref{hcap_upper_bound} to find
\begin{equation}
\label{height_beta}
\rho_h:=
\height(\beta) \ge c \,  y \, \vp\left(1/y\right)^{-1} ,
\end{equation}
for some constant $c$.
Let $z \in \beta$ satisfy $\imag z \ge \rho_h$. Note that $|\real z-U_t|
\le \diam(\beta) \le \rho_d$. By scaling by $\rho_d^{-1}$ and then
using Lemma \ref{whitney} with $r=\vp(1/y)^2$ we get
\begin{equation}
\label{fz}
|f_t'(z)| \ge c \, \vp(1/y)^{-2\alpha}|\hat{f}'_t(iy)|,
\end{equation}
where $\alpha$ is the exponent from Lemma \ref{whitney}.
Let $w$ be a point on $\beta$ such that $|w-z| = \rho_h/2$. Since $z,w
\in \beta$ there are $t_1, t_2 \in [t, t+y^2]$ such that $\gamma(t_1)=f_t(z)$
and $\gamma(t_2)=f_t(w)$. In view of (\ref{quarter}) we have 
\begin{equation}
\label{koebe_est}
B(f_t(z), |f_t'(z)|\rho_h/16) \subset f_t(B(z, \rho_h/4)),
\end{equation}
where $B(z, \rho)$ denotes the open ball of radius $\rho$ around
$z$. Hence, using (\ref{koebe_est}) and (\ref{fz}), we conclude that
\begin{align*}
|\gamma(t_1)-\gamma(t_2)|&=|f_t(z)-f_t(w)|\\
& \ge |f_t'(z)| \, \rho_h/16 \\
& \ge c \, |\hat{f}'_t(iy)| \, y \, \vp(1/y)^{-(2\alpha+1)},
\end{align*}
and this completes the proof.
\end{proof}

Although we do not use it in this paper, we will give some results
about existence of the curve for continuous driving functions that are
not necessarily weakly H\"older-$1/2$.

\begin{Proposition}  There exists $c < \infty$
such that the following is true.  Suppose $\delta >0$
and $(g_t)$ is a Loewner chain with driving
function $U_t$.  Suppose that $0 \leq s,y< \infty$
and
\begin{equation}  \label{mar14.6}
   |U_{t+s} - U_s| \leq \delta.
\end{equation}
Then
\[      |\gamma(t+s) - \gamma(t)| 
  \leq c \, e^{5a} \, \left[v(t,\delta)
   + v(t+s,\delta)\right].\]
\end{Proposition}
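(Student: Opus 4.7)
The plan is to follow the proof of Proposition \ref{UB} almost verbatim, with the role of $y$ played by $\delta$ and with the subpower factor $\vp(1/y)$ replaced by an absolute constant coming from hypothesis \eqref{mar14.6}. Specifically, start from the four-term triangle inequality
\[
|\gamma(t+s) - \gamma(t)| \le A_1 + A_2 + A_3 + A_4,
\]
where $A_1 = |\gamma(t+s) - f_{t+s}(U_{t+s}+i\delta)|$, $A_2 = |\gamma(t) - f_t(U_t+i\delta)|$, $A_3 = |f_{t+s}(U_{t+s}+i\delta) - f_{t+s}(U_t+i\delta)|$, and $A_4 = |f_{t+s}(U_t+i\delta) - f_t(U_t+i\delta)|$. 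The terms $A_1$ and $A_2$ are bounded by $v(t+s,\delta)$ and $v(t,\delta)$ respectively, directly from the definition of $v$ as the length of the image of a radial segment.

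The real work is in $A_3$ and $A_4$. For $A_3$, the horizontal segment joining $U_t+i\delta$ to $U_{t+s}+i\delta$ has length at most $\delta$ by \eqref{mar14.6} and sits inside a single Koebe-type neighborhood in $\HH$; the distortion theorem \eqref{distortion}, applied on a disk of radius strictly less than $2\delta$ that contains the segment (for instance the open disk of radius $2\delta$ about $U_{t+s}+2i\delta$, which has $r=1/\sqrt2$), yields $|f_{t+s}'(w)| \le c\,|f_{t+s}'(U_{t+s}+i\delta)|$ uniformly for $w$ on the segment. Combining the mean-value bound with \eqref{mar14.5} then gives $A_3 \le c\,v(t+s,\delta)$. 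For $A_4$, apply \eqref{mar14.7} at $z = U_t+i\delta$ with $y = \delta$ to obtain
\[
A_4 \le \frac{\delta}{5}\,[e^{5a}-1]\,|f_t'(U_t+i\delta)|,
\]
and invoke \eqref{mar14.5} once more to turn this into $A_4 \le c\,[e^{5a}-1]\,v(t,\delta)$. Summing the four estimates produces the claimed inequality.

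The main point, compared with Proposition \ref{UB}, is that the explicit bound \eqref{mar14.6} obviates any appeal to Lemma \ref{ts}: the two base points lie within a single bounded-distortion neighborhood, so the derivative ratio is an absolute constant rather than the subpower factor $\vp(1/\delta)$. The only subtlety I would verify carefully is the applicability of \eqref{mar14.7} for the term $A_4$, which requires $s \le \delta^2$; this is the natural regime dictated by the scaling in \eqref{mar14.6} and should be the setting intended by the statement.
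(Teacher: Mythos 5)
Your proposal is correct and follows essentially the same route as the paper: the paper's proof invokes the four-term decomposition from Proposition \ref{UB} with $y=\delta$, bounds the first, second and fourth terms by $v(t+s,\delta)$ and $v(t,\delta)$ exactly as you do, and handles the third term via the distortion theorem together with \eqref{mar14.6} and \eqref{mar14.5}, which is precisely your replacement of Lemma \ref{ts} by a bounded-distortion constant. Your closing remark about needing $s \le \delta^2$ for the application of \eqref{mar14.7} to the fourth term is a fair reading of the implicit hypotheses; the paper's proof makes the same use of \eqref{mar14.7} and so carries the same requirement.
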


\begin{proof}
We use the triangle inequality on $|\gamma(t+s) -\gamma(t)|$
as in the beginning of the proof of Proposition \ref{UB}
with $y = \delta$.
The first two terms are bounded by $v(t+s,\delta
)$ and $v(t,\delta)$,
respectively, and the fourth term is bounded in the
same way.
The distortion theorem, \eqref{mar14.6}, and \eqref{mar14.5} imply
\begin{align*}
|f_{t+s}(U_{t+s} + i\delta) - f_{t+s}(U_t + i \delta)|
    & \leq   c \, \delta \, |f'_{t+s}(U_{t+s}+ i \delta)| \\
  & \leq  c \, v(t+s,\delta), 
\end{align*}
and we get the desired estimate.
\end{proof}

\begin{Corollary}
Suppose $(g_t)$ is a Loewner chain with continuous driving
function $U_t$.  Suppose that for each $ 0 < t_1 < t_2
< \infty$, 
\[             \lim_{y \rightarrow 0+}
              v(t,y)=0  \]
uniformly for $t \in [t_1,t_2]$.  Then $(g_t)$ is generated
by a curve.
\end{Corollary}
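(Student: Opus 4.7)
The plan is to verify the two conditions that characterize a Loewner chain generated by a curve: existence of the radial limit $\gamma(t) = \lim_{y \to 0+} \hat f_t(iy)$ for every $t > 0$, and continuity of $t \mapsto \gamma(t)$. Both will follow almost directly from the preceding Proposition combined with the hypothesis and the continuity of $U$.

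For existence of the radial limit at a fixed $t > 0$, the hypothesis gives $v(t, y_0) < \infty$ for some $y_0 > 0$. Writing $f_t(U_t + iy) - f_t(U_t + iy') = i \int_{y'}^{y} f_t'(U_t + ir)\, dr$ for $0 < y' \le y \le y_0$, integrability of $|f_t'(U_t + i\,\cdot\,)|$ on $(0, y_0)$ shows $f_t(U_t + iy)$ is Cauchy as $y \to 0+$, so the limit exists; we call it $\gamma(t)$.

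For continuity, I would fix a compact subinterval $[t_1, t_2] \subset (0, \infty)$ and $\epsilon > 0$. Using the assumed uniform convergence, choose $\delta_0 > 0$ so that $v(t, \delta_0) < \epsilon/(2c e^{5a})$ simultaneously for every $t \in [t_1, t_2]$, where $c$ is the constant appearing in the preceding Proposition. Since $U$ is uniformly continuous on $[t_1, t_2]$, pick $\eta > 0$ such that $|U_{t+s} - U_t| \le \delta_0$ whenever $t, t+s \in [t_1, t_2]$ and $|s| \le \eta$. Applying the Proposition with $\delta = \delta_0$ then yields
\[
|\gamma(t+s) - \gamma(t)| \le c\, e^{5a}\, [v(t, \delta_0) + v(t+s, \delta_0)] < \epsilon,
\]
so $\gamma$ is uniformly continuous on $[t_1, t_2]$. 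Since $[t_1, t_2] \subset (0, \infty)$ was arbitrary, $\gamma$ is continuous on $(0, \infty)$, and the chain is therefore generated by the curve $\gamma$.

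The main (and only modest) obstacle is bookkeeping: one must invoke the uniformity of the convergence $v(\cdot, y) \to 0$ in order to select a single $\delta_0$ that controls both $v(t, \delta_0)$ and $v(t+s, \delta_0)$, and then align this with the uniform continuity modulus of $U$ on $[t_1, t_2]$. All the analytic content has already been packaged into the preceding Proposition, so no new estimates are needed.
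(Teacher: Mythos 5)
Your argument is correct as far as it goes, but it uses a different mechanism than the paper and, more importantly, stops short of the conclusion. For continuity on $[t_1,t_2]$ the paper does not invoke the preceding Proposition at all: it simply observes that $|\gamma(t)-f_t(U_t+iy)|\le v(t,y)$, so $\gamma$ is a uniform limit on $[t_1,t_2]$ of the continuous functions $t\mapsto f_t(U_t+iy)$ as $y\to 0+$, and is therefore continuous there. Your route through the Proposition (choosing $\delta_0$ so that $v(\cdot,\delta_0)$ is uniformly small, then using uniform continuity of $U$ to force $|U_{t+s}-U_t|\le\delta_0$) reaches the same conclusion and is fine; it is just heavier machinery, and one should also note that the Proposition's proof implicitly needs $s\le\delta_0^2$, which you could guarantee by additionally taking $\eta\le\delta_0^2$.

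The genuine gap is at $t=0$. The Rohde--Schramm criterion \eqref{feb10} quoted in Section~\ref{Preliminaries} requires continuity of $t\mapsto\gamma(t)$ on all of $[0,\infty)$, including right-continuity at $0$ where $\gamma(0)=U_0$. The hypothesis only gives uniform decay of $v(t,y)$ on compact subintervals $[t_1,t_2]\subset(0,\infty)$, so your argument establishes continuity on $(0,\infty)$ only, and you then conclude ``the chain is therefore generated by the curve $\gamma$'' without addressing the endpoint. The paper explicitly flags this as a separate (easy but necessary) step: ``One can check directly from the Loewner equation that $\gamma$ is right continuous at $0$.'' You need to supply that check — e.g., using Lemma~\ref{2.1} to bound $\diam(K_t)$ as $t\to0+$, or arguing directly from the Loewner ODE that $g_t\to\mathrm{id}$ locally uniformly — before the conclusion is complete.
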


\begin{proof}  Since $\gamma$ is a uniform limit of a sequence of
continuous functions on $[t_1,t_2]$, $\gamma$ is continuous
on $[t_1,t_2]$.  One can check directly from the Loewner
equation that $\gamma$ is right continuous at $0$ and
hence $\gamma$ is continuous.
\end{proof}

\begin{Remark}
This result also gives an estimate for the modulus of
continuity of $\gamma$.  However, the assumptions are very
strong.  If we do not assume that $U_t$ is weakly
H\"older-$1/2$, we do not have Lemma \ref{dyadic}.
\end{Remark}

\section{Proof of Theorem \ref{main.thm}}  \label{upsec}
We now turn to the proof of our main result.
The proof of Theorem \ref{main.thm} is split
into two parts: the lower bound (which requires derivative estimates
from above) is proven in Subsection \ref{main_UB}
and the upper bound (wich requires estimates from below and
control of correlations) in Subsection \ref{main_LB}.
The $\SLE$ moment estimates (Lemma \ref{nov9.prop20} and Lemma
\ref{lowerderivative}) we need for the proof are only stated
in  this section. The
proofs of the estimates build on those in \cite{Lawler_multifractal}
and are discussed in the last section.

To state the lemmas it is convenient to introduce a number of
$\kappa$-dependent parameters. 
Suppose
\[   -\infty< r < r_c := \frac 12 + \frac 4 \kappa . \]
The significance of $r_c$ is discussed in Section \ref{momsec}. 
Let
\begin{equation*}  \label{sept16.1} 
  \lambda = \lambda(r)  =  r \, \left(1 + \frac \kappa 4\right) 
   - \frac{\kappa r^2}{8},  
\end{equation*} 
\begin{equation*}  \label{sept16.2} 
  \zeta = \zeta(r) =  r - \frac{\kappa r^2}{8}, 
\end{equation*} 
and
\[          
\beta=\beta(r) = -1 + \frac{\kappa}{4 + \kappa - \kappa r}. 
\] 
Note that $\beta$ and $\lambda$ strictly increase with $r$
for $-\infty < r < r_c$ and hence, we could alternatively
consider
either of them as the free parameter.
Let $r_+$ be the larger root to $\lambda \beta + \zeta =2$
and let $\beta_+,\, \lambda_+$ and $\zeta_+$ be the corresponding
values of $\beta, \lambda$ and $\zeta$ respectively. 
Note that if $\kappa \neq 8$, then $r_+ < r_c$
and 
\[ - 1 = \beta(-\infty) < 
 \beta_+ = -1 + \frac{\kappa}{12 + \kappa - 4 \sqrt{8 + \kappa}}
 < \beta(r_c) = 1 . 
\]
Also,
\[    \alpha_* = \frac{1-\beta_+}{2}. \]

\subsection{Lower bound} \label{main_UB}
The following is the main moment estimate for the lower bound.
This was proved in \cite{Lawler_multifractal} for a certain
range of $r$ including $r=1, \kappa < 8$ which was most important
for that paper.  We give a different proof here that is valid
for all $r < r_c$. 
\begin{Lemma}  \label{nov9.prop20} 
Suppose $r < r_c$. Then there exists $c < \infty$ such that 
for all $t \geq 1$
\begin{equation}  \label{upper2} 
 \E\left[|\hat f_{t^2}'(i)|^{\lambda}\right] 
\leq c \, t^{-\zeta}. 
\end{equation} 
\begin{comment}
In particular,
\begin{equation}  \label{upper2.alt}
  \Prob\left\{|\hat f_{t^2}'(i)| \geq
      t^{\beta} \right\} \leq c \, t^{-(\zeta + \beta
 \lambda)} .
\end{equation}
\end{comment}
%\end{itemize} 
\end{Lemma}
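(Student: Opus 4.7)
The plan is to reduce the estimate to the reverse-time Loewner flow and then run a Girsanov change of measure against an explicit positive local martingale. By the time-reversal symmetry of Brownian motion, $|\hat f_{t^2}'(i)|$ has the same law as $|\hat F_{t^2}'(i)|$, where $F_s$ solves \eqref{BLODE} driven by a standard Brownian motion $B$. Setting $Z_s = F_s(i) - U_s = X_s + i Y_s$, one computes
\begin{align*}
dX_s &= -\frac{2 X_s}{|Z_s|^2}\, ds - \sqrt{\kappa}\, dB_s, \qquad dY_s = \frac{2 Y_s}{|Z_s|^2}\, ds, \\
d \log |F_s'(i)| &= \frac{2(X_s^2 - Y_s^2)}{|Z_s|^4}\, ds,
\end{align*}
with $X_0 = 0$, $Y_0 = 1$, $|F_0'(i)| = 1$. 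In particular $Y_s$ is monotonically increasing and, heuristically, $Y_s \asymp \sqrt{s}$.

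For $\theta_s := \arg Z_s \in (0,\pi)$, I consider the candidate
\[
M_s := |F_s'(i)|^{\lambda}\, Y_s^{\zeta - r}\, |Z_s|^{r} = |F_s'(i)|^{\lambda}\, Y_s^{\zeta}\, (\sin \theta_s)^{-r}.
\]
Expanding $d \log M_s$ by It\^o's formula and imposing that the drift of $dM_s/M_s$ vanish produces two algebraic conditions on the exponents: the $\theta_s$-independent part forces $\zeta = \lambda - r\kappa/4$, while the coefficient of $\cos^2\theta_s$ gives the quadratic $\tfrac{\kappa}{2}\,r^2 - (\kappa + 4)\,r + 4 \lambda = 0$. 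These are exactly the relations defining $\lambda(r)$ and $\zeta(r)$ in the lemma, so $M_s$ is a nonnegative local martingale with $M_0 = 1$ and stochastic differential $dM_s/M_s = -r\sqrt{\kappa}\, X_s\, |Z_s|^{-2}\, dB_s$. Applying Girsanov's theorem against $M$, one obtains a probability measure $\tilde{\Prob}$ under which
\[
dX_s = \frac{(r\kappa - 2)\, X_s}{|Z_s|^2}\, ds - \sqrt{\kappa}\, d\tilde{B}_s,
\]
and under the time change $du = ds/Y_s^2$ the rescaled quantity $S_u := X_{s(u)}/Y_{s(u)}$ satisfies the autonomous SDE $dS_u = \frac{(r\kappa - 4)\,S_u}{1+S_u^2}\, du - \sqrt{\kappa}\, dW_u$ for a $\tilde{\Prob}$-Brownian motion $W$.

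Since $M_0 = 1$ and (after a standard localization argument) $\tilde{\Prob}$ is a bona fide probability, the Girsanov identity gives
\[
\E\left[|F_{t^2}'(i)|^{\lambda}\right] = \tilde{\E}\left[ Y_{t^2}^{-\zeta}\, (\sin \theta_{t^2})^{r} \right].
\]
The remaining plan is to use the autonomous SDE for $S_u$ to show that it is ergodic with an invariant density whose tail controls the range of admissible $r$, thereby forcing $Y_{t^2} \asymp t$ in a suitable $L^q$ sense, and then to handle the factor $(\sin \theta_{t^2})^{r}$ via H\"older's inequality. This yields $\tilde{\E}[Y_{t^2}^{-\zeta}\,(\sin \theta_{t^2})^{r}] \le c\, t^{-\zeta}$ and completes the proof. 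The main obstacle lies in the regime $r < 0$, where $(\sin \theta_s)^{r}$ is unbounded as $\theta_s \to 0, \pi$; bounding its negative moments uniformly up to the critical value $r_c$, rather than only on the smaller range treated in \cite{Lawler_multifractal}, is the technical heart of the argument and requires a careful analysis of the invariant measure of $S_u$ together with the boundary dynamics of $\theta_s$ under $\tilde{\Prob}$.
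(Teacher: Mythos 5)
Your martingale is precisely the paper's object: with $a=2/\kappa$ and the time change $\sigma(t)=\inf\{s:Y_s=e^{at}\}$, the paper's
$N_t=e^{a\lambda L_t}e^{a\zeta t}[\cosh J_t]^r$ equals $|h'_{\sigma(t)}(i)|^\lambda Y_{\sigma(t)}^\zeta S_{\sigma(t)}^{-r}$, which is exactly your $M_s$ evaluated at $s=\sigma(t)$. Your algebraic verification (the two relations fixing $\zeta$ and $\lambda$) is correct, and the Girsanov identity
$\E[|F_{t^2}'(i)|^\lambda]=\tilde\E\bigl[Y_{t^2}^{-\zeta}(\sin\theta_{t^2})^r\bigr]$ is the right starting point. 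So up to the change of measure you have rederived the paper's setup, merely in the unaccelerated time parameter.

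Where the proposal falls short is the finishing step, and the diagnosis of where the difficulty sits is not quite right. Since $Y_s\le\sqrt{1+4s}$ deterministically, when $\zeta<0$ (which covers all $r<0$ and also $8/\kappa<r<r_c$) the factor $Y_{t^2}^{-\zeta}$ is bounded by $ct^{-\zeta}$ with no probabilistic input, so for $r<0$ the only issue is a \emph{uniform-in-$t$} bound on $\tilde\E[(\sin\theta_{t^2})^r]$. Conversely, for $0<r<8/\kappa$ one has $\zeta>0$ and $(\sin\theta)^r\le1$, so the entire content is a \emph{lower} bound on $Y_{t^2}$ in an $L^\zeta$ sense under $\tilde\Prob$. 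In neither regime does H\"older help: in the first it is vacuous (one factor is deterministic), and in the second there is nothing to split. The genuine obstacle, which your sketch elides, is that $Y_{t^2}$ and $\theta_{t^2}$ are strongly correlated, and that the time $u(t^2)=\int_0^{t^2}dr/Y_r^2$ at which you are reading off $S_u$ is a random (stopping) time, so bounds coming from the invariant density of the positive-recurrent diffusion do not apply directly. The paper deals with this by two devices you do not have: (i) a smoothing step that replaces $\E[|h'_{t^2}(i)|^\lambda]$ by $\frac{c}{t^2}\int_{t^2}^{2t^2}\E[|h'_u(i)|^\lambda]\,du$ (scaling plus distortion theorem), which after the change of variables $u=\sigma(s)$ turns the problem into estimating $\E_*[(\cosh J_s)^{2-r};\,t^2\le\sigma(s)\le 2t^2]$ at a \emph{fixed} $s$; and (ii) Lemmas \ref{mar10.lem1}--\ref{mar10.lem2}, which quantify the correlation between the event $\{\sigma(s)\approx u^2 e^{2as}\}$ and the size of $\cosh J_s$, with the case split in the latter lemma being exactly what is needed to reach the full range $r<r_c$. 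Without some substitute for these two steps, ``ergodicity of $S_u$ plus H\"older'' does not close the argument.
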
 
 
\begin{proof}  
See Section \ref{momsec}.
\end{proof} 

In the proof of this result one also finds that the expectation
in \eqref{upper2}, roughly speaking, is carried on an event
on which
$|\hat f_{t^2}'(i)| \approx  t^{\beta} $ and this has probability
of order $t^{-(\zeta + \lambda \beta)}.$
From this lemma we can derive the following uniform estimate from
which the lower bound will follow. Recall
that $\hat f'_0(iy)=1$ for all $y>0$, so we have to restrict our
attention to positive $\beta$.  
\begin{Proposition}\label{uniform_upper}  Suppose 
$\beta > \max\{0,\beta_+\}$. With probability one 
there exists $y_0 > 0$, such that for all $t \in [0,1]$ and all $y < y_0$, 
\[        |\hat f_t'(iy)| \leq y^{-\beta}. \] 
\end{Proposition}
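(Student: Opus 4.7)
The plan is to combine the moment estimate of Lemma \ref{nov9.prop20} with the SLE scaling identity
\[
|\hat f_s'(iy)| \stackrel{d}{=} |\hat f_{s/y^2}'(i)|,
\]
run a Borel--Cantelli argument on a dyadic $(t,y)$-grid, and then use Lemma \ref{dyadic} and the Koebe distortion theorem to promote the resulting bound to one that is uniform in $t\in[0,1]$ and $y\in(0,y_0]$.

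Given $\beta>\max\{0,\beta_+\}$, I would use the strict monotonicity of $r\mapsto \beta(r)$ together with the fact that $r_+$ is the \emph{larger} root of $\lambda\beta+\zeta=2$ to pick $r\in(r_+,r_c)$ with associated $\beta':=\beta(r)$ in $(\max\{0,\beta_+\},\beta)$ and $\lambda\beta'+\zeta>2$, and then pick $\beta''\in(\beta',\beta)$. Markov's inequality applied to Lemma \ref{nov9.prop20} with $t=\sqrt k$ and $k\ge 1$ gives
\[
\Prob\bigl(|\hat f_k'(i)|\ge 2^{n\beta'}\bigr) \leq c\,2^{-n\lambda\beta'}\, k^{-\zeta/2},
\]
and the scaling identity transfers this to the event $\{|\hat f_{k2^{-2n}}'(i2^{-n})|\ge 2^{n\beta'}\}$. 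Summing over $k=1,\ldots,2^{2n}$ gives, in the case $\zeta<2$ (the remaining cases are easier), a bound of order $2^{-n(\lambda\beta'+\zeta-2)}$, which is summable in $n$ exactly because $\lambda\beta'+\zeta>2$. Borel--Cantelli then supplies an a.s.\ finite random $n_0$ for which $|\hat f_{k2^{-2n}}'(i2^{-n})|<2^{n\beta'}$ for every $n\ge n_0$ and every $k\in\{1,\ldots,2^{2n}\}$.

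Lemma \ref{dyadic}, applied with exponent $\beta''$, upgrades the dyadic-time bound to $|\hat f_t'(i2^{-n})|\le c_1 2^{n\beta''}$ for every $t\in[0,1]$ and every $n$. For arbitrary small $y$, pick $n$ with $2^{-(n+1)}\le y\le 2^{-n}$ and apply the Koebe distortion estimate \eqref{distortion} to $\hat f_t$ on the disk $B(i2^{-n},2^{-n}/2)\subset\HH$: since $iy$ lies in this disk, $|\hat f_t'(iy)|$ and $|\hat f_t'(i2^{-n})|$ are comparable up to a universal constant, whence
\[
|\hat f_t'(iy)| \le c_2\, y^{-\beta''}.
\]
Since $\beta''<\beta$, the right-hand side is dominated by $y^{-\beta}$ once $y$ is smaller than some (random) $y_0>0$, giving the claim. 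I expect the main obstacle to be the Borel--Cantelli bookkeeping in the middle step: the threshold $\beta_+$ is forced precisely by the sharp condition $\lambda\beta+\zeta>2$ that makes the double sum over $k\le 2^{2n}$ (time discretization) and $n$ (height discretization) finite, so any $\beta\le\beta_+$ would break summability and the argument would fail.
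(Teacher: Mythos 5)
Your proposal is correct and follows essentially the same route as the paper: rescale to reduce to bounding $\Prob\{|\hat f_{k}'(i)|\geq 2^{n\beta'}\}$, apply Chebyshev's inequality to Lemma \ref{nov9.prop20}, sum over the dyadic $(j,n)$-grid using $\lambda\beta'+\zeta>2$, invoke Borel--Cantelli, and then pass to all $(t,y)$ via Lemma \ref{dyadic} and the distortion theorem. The explicit chain $\beta'<\beta''<\beta$ you introduce is exactly the slack the paper absorbs implicitly by reducing to a dyadic-scale bound ``for all $\beta>\max\{0,\beta_+\}$,'' and your remark that the case $\zeta\ge 2$ is easier (the $j$-sum is then bounded and one just needs $\lambda\beta'>0$) is a clean way to handle the bookkeeping that the paper addresses by adjusting $\beta$.
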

\begin{proof}  
For $\beta > \max\{0,\beta_+\}$ we have $\lambda > 0, \zeta > 0$
and $\beta \lambda + \zeta > 2$. By choosing
$\beta $ smaller if necessary, but still larger
than  $\max\{0,\beta_+\}$, 	we can guarantee that
$\zeta < 2$.
We write 
\[
\hat{f}_{j,n}=\hat{f}_{(j-1)2^{-2n}}, \quad j=1,\ldots,2^{2n}.
\]
%\eqref{nov19.13} and \eqref{nov20.19} 
By Lemma \ref{dyadic} and the distortion theorem it 
suffices to prove that for all   
$\beta > \max\{0,\beta_+\}$, with probability 
one there exists $N< \infty$ such that for $n \geq N$, 
\begin{equation}  \label{nov14.4} 
    |\hat f_{j,n}'(i2^{-n})| \leq 2^{\beta n}, \quad
  j=1,2,\ldots,2^{2n}.  
\end{equation}  
Note that scale invariance implies 
\[  
\Prob \left \{  |\hat f_{j,n}'(i2^{-n})|\geq   2^{\beta n} 
\right\} = \Prob\left\{|\hat f_{j-1}'(i)| \geq 2^{\beta n}  
  \  \right\}.
\] 
By Lemma \ref{nov9.prop20} and Chebyshev's inequality,
\[ \Prob\left\{|\hat f_{j-1}'(i)| \geq 2^{\beta n}  
  \  \right\} \leq 2^{-\lambda \beta n} \, 
\E\left[|\hat f_{j-1}'(i)|^\lambda\right]
  \hspace{1in}\]
\[ \hspace{1in} \leq c \, j^{-\frac  \zeta 2} \,  2^{-\lambda \beta n} 
 = c \, \left(\frac{j}{2^{2n}}\right)^{-\frac \zeta 2}
    2^{-n (\lambda \beta + \zeta)}
.\]
Since $\lambda \beta + \zeta > 2$ and $\zeta < 2$,
we can sum over $j$ to get
\begin{equation}  \label{mar13.1}
     \sum_{j=1}^{2^{2n}} 
\Prob\left\{|\hat f_{j-1}'(i)| \geq 2^{\beta n}  
  \  \right\}  \leq c \, 2^{-n(\lambda \beta + \zeta - 2)} , 
\end{equation} 
and hence
\[ \sum_{n=1}^\infty \sum_{j=1}^{2^{2n}} 
\Prob\left\{|\hat f_{j-1}'(i)| \geq 2^{\beta n}  
  \  \right\}  < \infty.\]
The Borel-Cantelli lemma now implies \eqref{nov14.4}.
\end{proof} 

\begin{Proposition} \label{mar13.prop1}
Suppose $\beta_+ < \beta < 0$.
With probability one, for every $\epsilon >0$, there
exists $y_\epsilon > 0$ such that for all
$t \in [\epsilon,1]$ and all $y < y_{\epsilon}$,
\[              |f_t'(iy)| \leq y^{-\beta}. \]
\end{Proposition}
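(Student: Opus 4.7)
The plan is to parallel the proof of Proposition \ref{uniform_upper}, combining Chebyshev's inequality with the moment estimate of Lemma \ref{nov9.prop20}, while now exploiting the restriction $t\ge\epsilon$ to cut off the range of dyadic times summed over. I first fix $\beta'\in(\beta_+,\beta)$ and let $r$ be the unique value with $\beta(r)=\beta'$; then $r\in(r_+,4/\kappa)$, so $\lambda=\lambda(r)>0$, $\zeta=\zeta(r)>0$, and $\lambda\beta'+\zeta>2$. The last inequality rests on the algebraic identity
\[
   \lambda(r)\,\beta(r)+\zeta(r)\;=\;\frac{\kappa^{2}r^{2}}{8\,(4+\kappa-\kappa r)},
\]
which equals $2$ at $r=r_+$ and is strictly increasing on $(r_+,r_c)$.

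By the scaling property of $\SLE$, $|\hat f'_{(j-1)2^{-2n}}(i2^{-n})|\stackrel{d}{=}|\hat f'_{j-1}(i)|$, so Chebyshev with exponent $\lambda$ together with Lemma \ref{nov9.prop20} yields
\[
  \Prob\!\left\{|\hat f'_{j-1}(i)|\ge 2^{n\beta'}\right\}\;\le\;c\,(j-1)^{-\zeta/2}\,2^{-\lambda\beta' n}.
\]
Because $\beta'<0$ and $\lambda>0$, the Chebyshev factor $2^{-\lambda\beta' n}=2^{|\lambda\beta'|n}$ now \emph{grows} exponentially in $n$, so summing over the full range $j\in\{1,\dots,2^{2n}\}$ would be fatal. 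Restricting instead to $j$ with $(j-1)2^{-2n}\ge\epsilon$, an elementary tail estimate gives
\[
  \sum_{\,j:\,(j-1)2^{-2n}\,\ge\,\epsilon\,}(j-1)^{-\zeta/2}\;\le\;c_\epsilon\,2^{n(2-\zeta)},
\]
with $c_\epsilon$ finite for each fixed $\epsilon>0$ irrespective of the sign of $\zeta-2$. Therefore
\[
  \sum_{\,j:\,(j-1)2^{-2n}\,\ge\,\epsilon\,}\Prob\!\left\{|\hat f'_{j-1}(i)|\ge 2^{n\beta'}\right\}\;\le\;c_\epsilon\,2^{-n(\lambda\beta'+\zeta-2)},
\]
which is summable in $n$ since $\lambda\beta'+\zeta>2$.

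Borel--Cantelli will then give, almost surely, the dyadic bound $|\hat f'_{(j-1)2^{-2n}}(i2^{-n})|\le 2^{n\beta'}$ for every sufficiently large $n$ and every $j$ with $t_{j-1}\ge\epsilon$. As in the proof of Proposition \ref{uniform_upper}, the interpolation argument underlying Lemma \ref{dyadic} (via Lemmas \ref{ts}, \ref{lemma34}, and the distortion theorem) will upgrade this to the continuous estimate $|\hat f'_t(iy)|\le y^{-\beta}$ uniformly for $t\in[\epsilon,1]$ and $y$ smaller than some deterministic threshold $y_\epsilon>0$; I chose $\beta'$ strictly less than $\beta$ precisely so that the constants from this interpolation and from the distortion theorem are absorbed into the small factor $2^{n(\beta-\beta')}$ once $n$ is large. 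Intersecting the resulting full-measure events over a countable sequence $\epsilon_k\downarrow 0$ produces a single null set outside which the conclusion holds for every $\epsilon>0$.

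The main obstacle, as just highlighted, is the exponentially large Chebyshev factor that appears when $\beta<0$: without restricting the dyadic range in $j$ one gets a divergent bound. The cutoff $j\gtrsim\epsilon\,2^{2n}$ saves the argument and is faithful to the geometry, since the conclusion genuinely fails at $t=0$ where $\hat f'_0(iy)\equiv 1$ exceeds $y^{-\beta}$ for negative $\beta$ and small $y$.
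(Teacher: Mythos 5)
Your proof is correct and takes essentially the same route as the paper: apply Chebyshev with Lemma \ref{nov9.prop20} at dyadic times, restrict the sum to $j\gtrsim\epsilon\,2^{2n}$ to kill the exponentially large factor $2^{-\lambda\beta n}$ that appears for negative $\beta$, conclude by Borel--Cantelli and the interpolation of Lemma \ref{dyadic}. One small remark: the paper's proof asserts ``we do not have $\zeta>0$,'' but as you correctly observe $\zeta(r)=r(1-\kappa r/8)>0$ on $(r_+,4/\kappa)$ since $r_+=(4\sqrt{8+\kappa}-8)/\kappa>0$; the real obstruction to the Proposition~\ref{uniform_upper} argument in this regime is the sign of $\lambda\beta$ (and the possibility that $\zeta\ge 2$), and in any case the cutoff $j\ge\epsilon\,2^{2n}$ renders the sign of $\zeta$ irrelevant, exactly as you note.
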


\begin{proof}  For $\beta_+ < \beta < 0$, we have
$\lambda \beta + \zeta > 2$ and $\lambda > 0$.
The proof is identical to the previous proposition
except that we do not have $\zeta >0$.  We
replace \eqref{mar13.1} with
\[  \sum_{\epsilon 2^{2n} \leq j 
 \leq 2^{2n}} 
\Prob\left\{|\hat f_{j-1}'(i)| \geq 2^{\beta n}  
  \  \right\}  \leq c_\epsilon \, 2^{-n(\lambda \beta + \zeta - 2)} .\]

\end{proof}

We can now easily prove the lower bound in Theorem \ref{main.thm}.
Recall that $\alpha_*=\min\{1/2, (1-\beta_+)/2\}$.
\begin{proof}[Proof of lower bound for Theorem \ref{main.thm}]
Let $1 \ge \beta > \tilde \beta > \max\{0, \beta_+\}$; recall that $\beta_+=1$ if and
only if $\kappa = 8$. Almost surely, by
Proposition \ref{uniform_upper}, we have for all $t \in [0,1]$
\begin{equation}\label{feb9}
v(t, y)=\int_0^{y}|\hat f_t'(ir)| \, dr \le y^{1-\tilde{\beta}}  
\end{equation}
if $y$ is small enough. The last inequality together with Proposition
\ref{UB} show that the $\SLE (\kappa)$ Loewner chain is generated by a curve
when $\kappa \neq 8$ and imply the following modulus
of continuity
\[
|\gamma(t+s)-\gamma(t)| \le c s^{(1-\beta)/2}
\]
for all $s$ small enough since $\tilde{\beta} > \beta$. The lower
bound follows.

If $\beta_+ < \beta < 0$, it suffices to prove the result
for each fixed $\epsilon$.  The argument is the same
using Proposition \ref{mar13.prop1}
\end{proof}

\subsection{Upper bound}\label{main_LB} 
In this subsection we shall use the notation
\[
\hat{f}_{j,n}=\hat{f}_{(j-1)/n^2}, \quad j=n^2/2,\ldots,n^2.
\]
\begin{Lemma}\label{lowerderivative}
Suppose  $r < r_c$. Then there exist $0 < c_1,c_2 < \infty$,
a subpower function $\phi$, and events
\[   
E_{j,n}, \;\;\;n=1,2,\ldots, \;\; j=1,\ldots,n^2 
\] 
such that the following hold.  Let $E(j,n) = 1_{E_{j,n}}$ and  
\[     
F(j,n) = n^{\zeta-2} \, |\hat f_{j,n}'(i/n)|^\lambda \, E(j,n).
\] 
\begin{itemize} 
\item{ If $n^2/2 \leq j \leq n^2$, then on the event $E_{j,n}$, 
\begin{equation}  \label{feb2.1} 
   |\hat f_{j,n}'(i/n)|  \geq \phi(n)^{-1} \, n^{\beta}.
\end{equation} }
\item{If $n^2/2 \leq j \leq n$, 
\begin{equation}  \label{feb2.1.1} 
   c_1 \, n^{-2} \leq
  \E\left[F(j,n) \right] \leq c_2\, n^{-2}.  
\end{equation}} 
\item{If $n^2/2 \leq j \leq k \leq n^2$, 
\begin{equation}  \label{feb2.1.2} 
  \E[F(j,n) \, F(k,n)] \leq  n^{-4} \, \left(\frac{n^2}{k-j+1}\right) 
     ^{\frac{\lambda \beta + \zeta }{2}} \, \phi\left(\frac{n^2}{k-j+1}\right). 
\end{equation} 
}
\end{itemize} 
\end{Lemma}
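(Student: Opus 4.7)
The plan is to define, following \cite{Lawler_multifractal}, the event $E_{j,n}$ as a ``typicality'' event under the tilted probability measure $\PP^*$ obtained via a Girsanov transformation associated with the local martingale whose density against $\PP$ is essentially $|\hat f'_t(i)|^{\lambda}$ (appropriately compensated). Under $\PP^*$, the derivative $|\hat f'_{(j-1)/n^2}(i/n)|$ at times $(j-1)/n^{2} \asymp 1$ has typical size $n^{\beta}$; the event $E_{j,n}$ is built so that both the derivative and several auxiliary geometric quantities (oscillation of the driving function on various scales, the behavior of $|\hat f_{j,n}'(w)|$ for $w$ in a small box around $i/n$) behave as expected under $\PP^*$. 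In particular, the lower bound \eqref{feb2.1} is built into the definition.

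For \eqref{feb2.1.1}, Brownian scaling gives $\hat f'_{j,n}(i/n) \stackrel{d}{=} \hat f'_{j-1}(i)$, and applying Lemma \ref{nov9.prop20} with $t^{2}=j-1\ge n^{2}/2 - 1$ yields $\E[|\hat f'_{j,n}(i/n)|^{\lambda}] \le c\, n^{-\zeta}$; multiplying by $n^{\zeta-2}$ and using $E(j,n)\le 1$ gives the upper bound. The matching lower bound is obtained by change of measure:
\[
\E\bigl[|\hat f'_{j,n}(i/n)|^{\lambda}\, \mathbf 1_{E_{j,n}}\bigr] \asymp n^{-\zeta}\, \PP^{*}(E_{j,n}),
\]
and the construction of $E_{j,n}$ as a $\PP^{*}$-typical event guarantees $\PP^{*}(E_{j,n})\ge c>0$. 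The corresponding lower bound $\E[|\hat f'_{t^{2}}(i)|^{\lambda}]\ge c' t^{-\zeta}$ arises from the same moment-estimate package as Lemma \ref{nov9.prop20}.

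For \eqref{feb2.1.2}, condition on $\mathcal F_{(j-1)/n^{2}}$ and exploit the conformal Markov property of $\SLE$. Writing $s=(k-j)/n^{2}$, the composition rule gives $\hat f_{k,n}(i/n)=\hat f_{j,n}(\hat{\tilde f}_{s}(i/n))$, where $\hat{\tilde f}$ is built from an independent copy of (rescaled) $\SLE$. The chain rule yields
\[
|\hat f'_{k,n}(i/n)|^{\lambda} = |\hat f'_{j,n}(\hat{\tilde f}_{s}(i/n))|^{\lambda}\cdot |\hat{\tilde f}'_{s}(i/n)|^{\lambda}.
\]
The geometric content of $E(j,n)$ confines $\hat{\tilde f}_{s}(i/n)$ to a controlled box about $i/n$ and, together with Lemma \ref{ts} applied to $\hat f_{j,n}$, allows us to replace $\hat f'_{j,n}(\hat{\tilde f}_{s}(i/n))$ by $\hat f'_{j,n}(i/n)$ at the cost of a subpower factor $\phi(n^{2}/(k-j+1))$; on $E(j,n)\cap E(k,n)$ we additionally bound one copy of $|\hat f'_{j,n}(i/n)|^{\lambda}\le n^{\lambda\beta}\phi$. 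By the Markov property the two factors are independent, and Lemma \ref{nov9.prop20} applied to the inner $\SLE$ run for time $s n^{2}=k-j$ gives $\E[|\hat{\tilde f}'_{s}(i/n)|^{\lambda}\mathbf 1_{E(k,n)}]\le c(k-j+1)^{-(\lambda\beta+\zeta)/2}\phi$. Combining produces the advertised bound.

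The main obstacle is the second-moment estimate \eqref{feb2.1.2}: one needs to replace the evaluation point $\hat{\tilde f}_{s}(i/n)$ by $i/n$ inside $\hat f'_{j,n}$ with only a \emph{subpower} loss in $n^{2}/(k-j+1)$, not the polynomial loss Lemma \ref{whitney} would yield. This forces $E(j,n)$ to carry enough auxiliary geometric information (control of the hull of the inner chain and of the spatial behavior of $|\hat f'_{j,n}|$ on an appropriate Whitney scale) for Lemma \ref{ts} to apply; engineering the definition so that $\PP^{*}(E_{j,n})\ge c$ \emph{and} these geometric bounds all hold with the stated subpower constants is the technical heart of the argument, and the correct exponent $(\lambda\beta+\zeta)/2$ emerges only after careful accounting of the two contributions using the one-point moment estimate at both time scales.
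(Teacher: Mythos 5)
Your plan for the first two bullets matches the paper in spirit: the event is defined as a ``typicality'' event for the Girsanov-tilted measure $\Prob_*$ under which the time-changed diffusion $J$ is positive recurrent, the upper bound in \eqref{feb2.1.1} comes from Lemma \ref{nov9.prop20} and scaling, and the lower bound from $\Prob_*(E_{j,n}) \geq c$ together with the martingale change of measure. That part is fine.

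The handling of the correlation bound \eqref{feb2.1.2} is where your proposal departs from the paper, and I believe it contains a genuine gap. You work in the forward flow: you condition on $\mathcal F_{(j-1)/n^2}$, factor $|\hat f_{k,n}'(i/n)|^\lambda = |\hat f_{j,n}'(\hat{\tilde f}_s(i/n))|^\lambda\,|\hat{\tilde f}_s'(i/n)|^\lambda$, and then propose to move the evaluation point back to $i/n$ ``at the cost of a subpower factor.'' But the two evaluation points are at genuinely different scales: $i/n$ has modulus $1/n$, while $\hat{\tilde f}_s(i/n)$ typically sits at scale $\sqrt{s} = \sqrt{k-j}/n$, and no amount of geometric control put into $E(j,n)$ or $E(k,n)$ can make a conformal map have subpower distortion across a scale ratio of order $\sqrt{k-j}$. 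Indeed, if you carry through your bookkeeping — using Lemma \ref{nov9.prop20} to bound $\E[|\hat f_{j,n}'(i/n)|^\lambda] \lesssim n^{-\zeta}$, $\E[|\hat{\tilde f}_s'(i/n)|^\lambda] \lesssim (k-j+1)^{-\zeta/2}$, and one copy of $|\hat f_{j,n}'(i/n)|^\lambda \leq n^{\lambda\beta}\phi$ on the event — you land on $\E[F(j,n)F(k,n)] \lesssim n^{-4+\lambda\beta+\zeta}(k-j+1)^{-\zeta/2}\phi$, which is off from the target by exactly the polynomial factor $(k-j+1)^{-\lambda\beta/2}$. That factor must come from comparing $|\hat f_{j,n}'(\hat{\tilde f}_s(i/n))|$ to $|\hat f_{j,n}'(i/n)|$, and it is $\approx (k-j)^{-\beta/2}$, polynomial, not subpower — so ``engineering the event'' as you describe cannot produce the stated estimate.

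The paper sidesteps the evaluation-point problem entirely by working with the reverse Loewner flow. For $S < T$, it uses the fact that $\hat f_S'(i)\,\hat f_T'(i)$ has the same law as $(h_S^{(S)})'(i)\,(h_T^{(T)})'(i)$, where the initial segment $h_t^{(T)}$, $0 \leq t \leq T-S$, is \emph{independent} of $h^{(S)}$. The event $E(T)$ (defined for the reverse flow $\tilde h = h^{(T)}$) carries, via condition \eqref{mar12.2}, a direct bound on the ratio $|\tilde h_T'(i)|/|\tilde h_{T-S}'(i)| \leq (T/(T-S))^{\beta/2}\phi$ — both derivatives at the \emph{same} point $i$, at two times. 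This is precisely the polynomial factor $(k-j)^{-\beta/2}$ you are missing, built directly into the event rather than deduced from distortion. Then $|\tilde h_T'(i)|^\lambda\,E(T) \leq |\tilde h_{T-S}'(i)|^\lambda\,(n^2/(T-S+1))^{\lambda\beta/2}\,\phi$, the right side is independent of $h^{(S)}$, and Theorem \ref{mar11.theorem} applied at both time scales $S$ and $T-S$ immediately gives \eqref{feb2.1.2}. If you insist on working in the forward picture you can translate: on $E(k,n)$ one should impose $|\hat f_{j,n}'(\hat{\tilde f}_s(i/n))| \leq (n^2/(k-j))^{\beta/2}\phi$ directly (not via $|\hat f_{j,n}'(i/n)|$), but then you are really just reproving \eqref{mar12.2} in disguise, and the reverse-flow formulation is what makes the independence structure transparent.
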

\begin{proof}
See Section \ref{lowsec}.
\end{proof}

For fixed $\beta$, we let $A_n =
 A_{n,\beta}$ denote the event 
that there exists an
 integer $j$ with $1 \leq j \leq n^2$, 
\begin{equation}  \label{nov17.8} 
       |\hat f_{j,n}'(i/n)| \geq \phi(n)^{-1} \, n^{\beta},
\end{equation} 
where $\phi$ is as in \eqref{feb2.1}
We then have the following. 
\begin{Lemma} \label{feb9.2}
Suppose $\lambda \beta + \zeta < 2$.  Then there
exist $c > 0$ such   for all $n$ sufficiently large,   
\[  
\Prob(A_{n}) \geq c . 
\] 
In particular, 
\[   
\Prob\{A_{n} \mbox{ i.o.} \} \geq c .
\]  
\end{Lemma}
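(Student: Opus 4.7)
The plan is to apply the second-moment (Paley--Zygmund) method to the positive random variable
\[ S_n := \sum_{j=n^2/2}^{n^2} F(j,n). \]
Since $F(j,n) > 0$ forces $E_{j,n}$ to occur, and on $E_{j,n}$ the bound \eqref{feb2.1} gives $|\hat{f}'_{j,n}(i/n)| \geq \phi(n)^{-1} n^{\beta}$, we have the inclusion $\{S_n > 0\} \subseteq A_n$. It therefore suffices to produce $c > 0$ with $\Prob(S_n > 0) \geq c$ for all $n$ large.

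The first moment is immediate from \eqref{feb2.1.1} (read over the full range $n^2/2 \leq j \leq n^2$, which appears to be the intended range):
\[ \E[S_n] \geq c_1 \cdot (n^2/2) \cdot n^{-2} = c_1/2. \]
For the second moment, set $\gamma := (\lambda\beta + \zeta)/2$, so that the hypothesis $\lambda\beta + \zeta < 2$ reads $\gamma < 1$. Expanding $\E[S_n^2]$ using \eqref{feb2.1.2} and collecting pairs by $l := k - j + 1$,
\[ \E[S_n^2] \leq C \, n^{-2} \sum_{l=1}^{n^2/2+1} \left(\frac{n^2}{l}\right)^{\gamma} \phi\!\left(\frac{n^2}{l}\right). \]
Choosing $\epsilon > 0$ so that $\gamma + \epsilon < 1$, the subpower property yields $\phi(x) \leq C_\epsilon x^\epsilon$, and the sum is bounded by $C_\epsilon \, n^{2(\gamma+\epsilon)} \sum_{l=1}^{n^2} l^{-(\gamma+\epsilon)} \leq C' \, n^2$. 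Hence $\E[S_n^2] \leq C''$ uniformly in $n$. The Paley--Zygmund inequality then gives $\Prob(S_n > 0) \geq \E[S_n]^2/\E[S_n^2] \geq c > 0$, so $\Prob(A_n) \geq c$ for all $n$ large. The ``i.o.'' conclusion follows immediately since, by monotone continuity, $\Prob(\limsup_n A_n) = \lim_N \Prob\bigl(\bigcup_{n \geq N} A_n\bigr) \geq c$.

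The main obstacle is obtaining a second-moment bound that is genuinely $O(1)$ and not merely subpower in $n$: a careless treatment would leave a factor $\phi^{*}(n)$ in $\E[S_n^2]$ and degrade Paley--Zygmund to $\Prob(A_n) \geq c/\phi^{*}(n)$, which tends to $0$. The strict inequality $\gamma < 1$ is exactly what rescues the estimate, because then the partial sum $\sum_{l} l^{-(\gamma+\epsilon)}$ grows precisely at rate $(n^2)^{1-\gamma-\epsilon}$, cancelling the $n^{2(\gamma+\epsilon)}$ prefactor up to a uniform constant.
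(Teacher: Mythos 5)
Your proof is correct and follows exactly the paper's approach: both define the sum $Y_n=\sum_{j=n^2/2}^{n^2}F(j,n)$, observe $\{Y_n>0\}\subset A_n$, and apply Paley--Zygmund using the first- and second-moment bounds from Lemma \ref{lowerderivative}. You simply supply the arithmetic for $\E[Y_n^2]=O(1)$ (collecting pairs by the gap $l=k-j+1$ and using the subpower bound together with $\gamma<1$) that the paper leaves implicit, so the two arguments are essentially identical.
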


\begin{proof}
Let $F(j,n)$ be as in Lemma \ref{lowerderivative},  and let
\[      Y_n = \sum_{j=n^2/2}^{n^2}  F(j,n). \]
Note that $A_n \supset \{Y_n > 0\}$.  The estimates from Lemma
\ref{lowerderivative} show that
there exist $0 < c < c_2 < \infty$ such that 
\[  \E[Y_n]  \geq c, \;\;\;\;  \E[Y_n^2] \leq c_2^2 . \]
(This uses $\beta  \lambda + \zeta < 2$.) Therefore, a
standard
second moment argument gives
\[  \Prob(A_n ) \geq
 \Prob\{Y_n > 0 \} \geq \frac{\E[Y_n]^2}{\E[Y_n^2]} 
      \geq \frac{c^2}{c_2^2}. \]
\end{proof}

We can now prove the upper bound for Theorem \ref{main.thm} and
thereby complete the proof. 
Notice that Proposition \ref{lower_holder_bound} immediately implies
that $\gamma[0,t]$ cannot be H\"older continuous of order $> 1/2$,
 since $\hat f'_0(z) =1$.
\begin{proof}[Proof of upper bound for Theorem \ref{main.thm}]
Let $\beta < \tilde \beta < \beta_+$. 
Proposition \ref{lower_holder_bound} implies that on the event
$A_n=A_{n, \tilde \beta}$ there
exist times $t_1, t_2 \in [0,1]$ such that 
\[
|\gamma(t_1)-\gamma(t_2)| \ge c n^{\beta-1}=c(n^{-2})^{(1-\beta)/2}, \quad |t_1-t_2| \le n^{-2}.
\]
Consequently on the event $\{A_n \,\, \rm{i.o}\}$, the curve $\gamma(t), \, t \in [0,1]$, is
not H\"older continuous of order $(1-\beta)/2$. To show that this
happens almost surely, let $\mathcal{A}_r$ be
the event that $\gamma(t), \, t \in [0,r]$, is not H\"older continuous
of order $(1-\beta)/2$. By Lemma \ref{feb9.2} we have
\[
\PP(\mathcal{A}_1)=:c_0>0,
\] 
and by scale invariance 
\[
\PP(\mathcal{A}_r)=c_0, \quad r >0.
\] 
Note that $\mathcal{A}_r \subset \mathcal{A}_1$ if $r \le 1$ so that 
\[
\PP \left( \bigcap_{r >0} \mathcal{A}_r\right)=c_0.
\]
Since $c_0>0$, it now follows from the Blumenthal zero-one law (see
\cite[Theorem III.2.15]{Revuz_Yor}) that
$c_0=1$ and this completes the proof. 
\end{proof}

\section{Moments of derivatives}  \label{momsec}

In this section we review some results from \cite{Lawler_multifractal}
and extend one result.  We fix $\kappa$ and we let $a = 2/\kappa$. 
 We also fix a real number $r$ such that  
\[   r < r_c = \frac{1+4a}{2} . \] 
All constants and parameters in this section depend  on 
$a$ and $r$.  We let 
\[    q = r_c - r  = 2a + \frac 12 - r > 0 . \] 
The positivity of $q$ is important for the arguments in this section 
and this is why $r$ must be less than $r_c$.

A useful tool for estimating moments 
of   $|\hat f_t'|$ is 
the reverse Loewner flow (see, e.g., 
\cite[Section 10.3]{Lawler_multifractal}). 
Suppose $U_t$ is a standard Brownian motion  
and $h_t(z)$ is the solution 
to the reverse-time Loewner equation 
\begin{equation}  \label{rlt} 
  \p_t h_t(z) = \frac{a}{U_t - h_t(z)}, \;\;\;\; 
   h_0(z) = z .  
\end{equation} 
For fixed $T$, the distribution of $h_T(z)-U_T$ is 
the same as that of $\hat f_T(z)$ and hence, $h_T'(z)$ 
has the same distribution as $\hat f_T'(z)$.  Indeed, suppose  
$g_t$ is the solution of the forward-time equation for some continuous
$V$ with $V_0=0$: 
\[      \p_t g_t(z) = \frac{a}{g_t(z) - V_t} , \] 
and we let $V^{(T)}_t = V_{T-t} - V_T$. Then if $h$ is a solution to
\eqref{rlt} with driving function $V_t^{(T)}$ we have 
\[          
g_T^{-1}(z + V_T) = h_T(z)+V_T=h_T(z)-V_T^{(T)}.
\]
It remains to note that $V_t^{(T)}, \, 0 \le t \le T$, is a standard
Brownian motion starting at $0$ if $V$ is.  
If $S < T$ and $h^{(S)}, h^{(T)}$ denote the solutions  
to \eqref{rlt} with 
driving functions $U_t^{(S)}=U_{S-t}-U_S$ and $U_t^{(T)}=U_{T-t}-U_T$ then $h^{(T)}_t, 0 
 \leq t \leq T-S$ and $h^{(S)}_t, 0 \leq t \leq S$ are 
independent.  Note that $\hat f_S'(z)\,  
\hat f_T'(z)$ has the 
same distribution as $(h_S^{(S)})'(z) \, (h_T^{(T)})'(z)$.

Let $Z_t= Z_t(i)  = X_t + i Y_t = h_t(i) - U_t$ and  
$S_t = \sin [\arg Z_t]=[1+X^2_t/Y^2_t]^{-1/2}$.  
To study the behavior of $h_t'(i)$, it is useful to 
do a time-change so that the the logarithm of 
the imaginary part grows linearly. 
Let 
\[   \sigma(t) = \inf\{s: Y_s(z) = e^{at}\}. \] 
The following lemma can be deduced from the Loewner 
equation and the analogous time change in a stochastic 
differential equation.

\begin{Lemma} \cite[Section 5]{Lawler_multifractal} 
Suppose $J_t$ satisifes  
\begin{equation}  \label{add.1} 
  dJ_t =  - r_c 
 \, \tanh J_t \, dt + d W_t, \;\;\;\; 
      J_0 =  0,  
\end{equation}
where $W_t$ is standard Brownian motion. 
Let 
\[  L_t = t - \int_0^t \frac{2\, ds}{\cosh^2 J_s} ,\] 
\begin{equation}  \label{mar11.1} 
    \sigma(t) = \int_0^t e^{2as} \, \cosh^2 J_s \; ds  , \;\;\;\; 
       \eta(s) = \sigma^{-1}(s). 
\end{equation} 
Then the joint distribution of 
\[        e^{a L_{\eta (s)}}, \;\;\;\; 
                 e^{a \eta(s)}, \;\;\;\; 
                \cosh J_{\eta(s)}, \;\;\;\;\; 0 \leq s < \infty 
 \] 
is the same as that of 
\[    |h_s'(i)| ,\;\;\;\;  Y_s , \;\;\;\; 
                 S_s^{-1} , \;\;\;\;0 \leq s < \infty. \] 
\end{Lemma}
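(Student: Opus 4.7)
The plan is to derive the stated identity by combining the reverse Loewner ODE for $Z_t = h_t(i) - U_t$ with a deterministic time change that normalizes the growth of $Y_t = \imag Z_t$, and then to read off the SDE for $J_t$ by two successive applications of It\^o's formula. First I would write $Z_t = X_t + iY_t$ and note that $dZ_t = -(a/Z_t)\,dt - dU_t$; separating real and imaginary parts,
\[
dX_t = -\frac{aX_t}{X_t^2+Y_t^2}\,dt - dU_t, \qquad dY_t = \frac{aY_t}{X_t^2+Y_t^2}\,dt.
\]
Differentiating the reverse Loewner equation in $z$ gives $\p_t h_t'(i) = a h_t'(i)/Z_t^2$, so
\[
\log|h_t'(i)| = \int_0^t \frac{a(X_s^2-Y_s^2)}{(X_s^2+Y_s^2)^2}\,ds.
\]
Since $Y_t$ is strictly increasing from $Y_0 = 1$, one defines $\sigma(t)$ by $Y_{\sigma(t)} = e^{at}$, forcing $\sigma'(t) = X_{\sigma(t)}^2 + Y_{\sigma(t)}^2$. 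Introducing $J_t$ via $\sinh J_t = X_{\sigma(t)}/Y_{\sigma(t)}$ then makes $\cosh J_t = |Z_{\sigma(t)}|/Y_{\sigma(t)} = S_{\sigma(t)}^{-1}$ and yields $\sigma'(t) = e^{2at}\cosh^2 J_t$, which is exactly the formula for $\sigma(t)$ in \eqref{mar11.1}.

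Next I would derive the SDE for $J_t$. Applying It\^o to $X_t/Y_t$ (no covariation terms arise because $Y_t$ is of bounded variation) gives
\[
d(X_t/Y_t) = -\frac{2aX_t}{Y_t(X_t^2+Y_t^2)}\,dt - \frac{1}{Y_t}\,dU_t,
\]
and composing with $\sigma$ produces
\[
d\tilde X_t = -2a\sinh J_t\,dt - \cosh J_t\,dW_t, \qquad \tilde X_t := X_{\sigma(t)}/Y_{\sigma(t)},
\]
where $W_t$ is the standard Brownian motion obtained via Dambis--Dubins--Schwarz from $U_{\sigma(t)}/Y_{\sigma(t)}$, whose quadratic variation equals $\cosh^2 J_t\,dt$. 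A second application of It\^o to $J_t = \sinh^{-1}\tilde X_t$ contributes a drift $-2a\tanh J_t\,dt$ from the deterministic part together with the second-order correction $-\tfrac12\tanh J_t\,dt$; these combine to $-(2a+\tfrac12)\tanh J_t\,dt = -r_c\tanh J_t\,dt$, giving \eqref{add.1} after replacing $W$ by its negative (still a Brownian motion).

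Finally, the identity for $L$ follows from the change of variables $s = \sigma(u)$ in the integral for $\log|h_t'(i)|$: the substitutions $X_{\sigma(u)}^2 + Y_{\sigma(u)}^2 = e^{2au}\cosh^2 J_u$ and $X_{\sigma(u)}^2 - Y_{\sigma(u)}^2 = e^{2au}(\sinh^2 J_u - 1)$ together with $ds = e^{2au}\cosh^2 J_u\,du$ collapse the integrand to $a(1 - 2/\cosh^2 J_u)$, so $\log|h_{\sigma(t)}'(i)| = aL_t$ with $L_t$ as in the statement. The full joint distributional identity then follows, since $|h_s'(i)|$, $Y_s$, and $S_s^{-1}$ are all expressed through $J$ evaluated at $\eta(s) = \sigma^{-1}(s)$. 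The main obstacle is the It\^o bookkeeping that produces the drift coefficient $-r_c = -(2a + \tfrac12)$: the $-2a\tanh J_t$ term from the drift of $X/Y$ and the $-\tfrac12\tanh J_t$ It\^o correction must combine to precisely the critical value $r_c$, so every factor arising from the time change and the hyperbolic identities must be tracked with care.
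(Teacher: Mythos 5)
Your proof is correct and carries out exactly the program the paper gestures at when it says the lemma ``can be deduced from the Loewner equation and the analogous time change in a stochastic differential equation''; the paper itself defers the details to \cite[Section 5]{Lawler_multifractal}. Your computation of $dX_t, dY_t$, the ODE for $\log|h_t'(i)|$, the time change $\sigma$ with $\sigma'(t)=e^{2at}\cosh^2 J_t$, the two applications of It\^o yielding the drift $-(2a+\tfrac12)\tanh J_t = -r_c\tanh J_t$, and the change of variables that collapses $\log|h_{\sigma(t)}'(i)|$ to $aL_t$ are all correct, and the final identification of the three time-changed processes with $|h_s'(i)|, Y_s, S_s^{-1}$ follows.

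One small imprecision worth flagging: when you invoke Dambis--Dubins--Schwarz, the process being time-changed is the local martingale $\int_0^{\cdot}Y_s^{-1}\,dU_s$ evaluated at $\sigma(t)$, not the ratio $U_{\sigma(t)}/Y_{\sigma(t)}$ (the latter is not a local martingale and has a different quadratic variation). The quadratic variation you actually compute, $\langle\,\cdot\,\rangle_t=\int_0^t\cosh^2 J_u\,du$, is the right one for the stochastic integral, so this is a wording slip rather than a gap, but it should be corrected. One also needs to note that $\sigma(t)=\inf\{s:Y_s=e^{at}\}$ is a stopping time for each $t$ (it is a hitting time of the adapted increasing process $Y$) so that the time-change machinery applies; you use this implicitly.
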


Let $\lambda,\zeta,\beta$ be as in Section \ref{upsec}; 
  we can 
write   
\[  \lambda = r \, \left(1 + \frac{1}{2a} \right) 
  - \frac {r^2}{4a} , \;\;\;\; 
  \zeta = \lambda - \frac{r}{2a},\] 
\[     
  \beta = \frac{1-2q} {1+2q } = \frac{r-2a}{1-2r+2a}. \] 
We comment here that the $\beta$ of this paper is the same 
as $\mu$ in \cite{Lawler_multifractal}; the $\beta$ in that paper 
is half this value. 
 
Note that \eqref{add.1} can be written as 
\[ 
 dJ_t =  -(q+r)  
 \, \tanh J_t \, dt + d W_t, \;\;\;\; 
      J_0 =  0.  
\] 
Using It\^o's formula, one can see that 
\[   N_t =  e^{a \lambda L_t} \, e^{a \zeta t} \, [\cosh J_t]^r \]  
is a martingale satisfying 
\[              dN_t = r \, [\tanh J_t] \,N_t\, dW_t. \] 
Let $\Prob_*$ denote the probability measure obtained by 
weighting by the martingale $N_t$, that is, if $E$ is an event 
measurable with respect to $\{W_s: 0 \leq s \leq t\}$, 
\[           \Prob_*(E) = N_0^{-1} \, \E[N_t \, 1_E] 
             =  {\E[N_t \, 1_E]} . \]  
The Girsanov theorem (see, e.g., \cite[Chapter VIII]{Revuz_Yor} and \cite[Appendix A]{Lawler_multifractal}) implies that 
\[         dW_t = r \, \tanh J_t \, dt + dB_t , \] 
\begin{equation}  \label{add.2} 
     dJ_t = - q \, \tanh J_t \, dt + dB_t, 
\end{equation} 
where $B_t$ is a standard Brownian motion with 
respect to the measure 
$\Prob_*$.  We write $\E_*$ for expectations with respect 
to $\Prob_*$.  
 
\begin{Lemma} \cite[Lemma 7.1]{Lawler_multifractal} 
Suppose $J_t$ satisfies \eqref{add.2}. 
\begin{itemize} 
\item $J_t$ is a positive recurrent diffusion 
(with respect to the measure $\Prob_*$) with invariant density 
\[   v(x) = \frac{\Gamma(q + \frac 12)}{\Gamma(\frac 12) \Gamma(q)}  
\,  \frac{1}{\cosh^{2q} x} . \] 
\item  
\[     \int_{-\infty}^\infty \left[1 - \frac{2}{\cosh^2 x}  
\right] \,  
              v(x) \, dx =   \frac{1-2q}{1 + 2q} =  \beta. \] 
\item  Assume $J_0=0$. There exists $c < \infty$ such that if   
$k \geq 0$, $u \geq 0$,  
\begin{equation} \label{add.5} 
  \Prob_*\{\cosh J_t \geq u \mbox{ for some } k \leq t \leq 
  k+1\}  \leq c \, u^{-2q}.  
\end{equation} 
 
\end{itemize} 
\end{Lemma}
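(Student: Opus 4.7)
The plan is to treat the three bullet points in order, viewing the first two as direct stationary calculations for the one-dimensional diffusion \eqref{add.2} and devoting the real effort to the third.

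For the invariant density, I would solve the stationary Fokker--Planck equation for the generator $L = \tfrac12\partial_x^2 - q\tanh x\,\partial_x$: an invariant density $v$ satisfies $\tfrac12 v'' + (q\tanh x \cdot v)' = 0$, and integrating once while using integrability of $v$ at $\pm\infty$ kills the flux constant, leaving the first-order ODE $v'/v = -2q\tanh x$. The unique positive integrable solution is $v(x) = C\cosh^{-2q}x$, and the normalization constant $C = \Gamma(q+\tfrac12)/(\Gamma(\tfrac12)\Gamma(q))$ comes from the Beta-integral identity
\[
\int_\R \cosh^{-2q}x\,dx = 2\int_0^1 (1-u^2)^{q-1}\,du = \frac{\sqrt\pi\,\Gamma(q)}{\Gamma(q+\tfrac12)}
\]
after the substitution $u=\tanh x$. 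Positive recurrence then follows from non-degeneracy of the diffusion together with finiteness of the invariant measure. The second bullet is a one-line Gamma computation:
\[
\int_\R \frac{v(x)\,dx}{\cosh^2 x} = \frac{\Gamma(q+1)\,\Gamma(q+\tfrac12)}{\Gamma(q+\tfrac32)\,\Gamma(q)} = \frac{q}{q+\tfrac12} = \frac{2q}{2q+1},
\]
whence $\int(1-2\cosh^{-2}x)\,v(x)\,dx = 1 - \tfrac{4q}{2q+1} = \tfrac{1-2q}{1+2q} = \beta$.

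For the tail bound, the first move is to reduce to a single window by the Markov property at time $k$:
\[
\Prob_*\!\Big\{\sup_{k\le t\le k+1}\cosh J_t \ge u\Big\} = \int \rho_k(y)\,\Phi_u(y)\,dy,
\]
where $\rho_k$ is the density of $J_k$ under $\Prob_*^{\,0}$ and $\Phi_u(y) = \Prob_*^{\,y}\{\sup_{0\le s\le 1}\cosh J_s \ge u\}$. To bound $\Phi_u$ I would use the scale function $s(x) = \int_0^x \cosh^{2q}y\,dy$, which is $L$-harmonic so $s(J_t)$ is a $\Prob_*$-martingale; since $s$ is odd and grows like $c\,e^{2q|x|}$, the event $\{\cosh J_t \ge u\} = \{|J_t|\ge M\}$ with $M = \operatorname{arccosh}u$ coincides with $\{|s(J_t)|\ge s(M)\}$. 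Writing $s(J_s)-s(y) = \int_0^s \cosh^{2q}(J_r)\,dB_r$ and controlling $\E^{\,y}[\cosh^{4q}J_r]$ by a Gronwall argument (It\^o on $\cosh^{4q}$ gives $L(\cosh^{4q})\le(4q^2+2q)\cosh^{4q}$), Doob's maximal inequality produces a bound of the form $\Phi_u(y) \le C\cosh^{2q}(y)/(s(M)-|s(y)|)$ for $|y|\le M/2$, and trivially $\Phi_u(y)\le 1$ otherwise.

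The main obstacle is to get the \emph{sharp} exponent $2q$ in the end, uniformly in the window index $k$. A naive Foster--Lyapunov argument with $V_\varepsilon = \cosh^{2q-\varepsilon}$ gives $\E_*[V_\varepsilon(J_t)]$ bounded in $t$ and yields only the lossy tail $u^{-2q+\varepsilon}$; one cannot take $\varepsilon=0$ directly because $L(\cosh^{2q}) = q\cosh^{2q-2}x \ge 0$, so $\cosh^{2q}J_t$ actually grows in expectation. To close the gap I would exploit reversibility of the diffusion with respect to $v(x)\,dx$ (which one checks directly from $v'/v = -2q\tanh x$) together with the Poincar\'e inequality for the sub-exponential measure $v$, giving exponential convergence of $\rho_k$ to $v$ in $L^2(v)$; combined with a short-time ultracontractivity estimate for the semigroup, this promotes to a pointwise bound $\rho_k(y)\le Cv(y)$ uniform in $k\ge 1$ (for $k<1$ a direct Gaussian comparison suffices). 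Plugging this into the Markov integral above together with the martingale bound on $\Phi_u$ then produces the desired $c\,u^{-2q}$ control, matching the invariant-measure tail.
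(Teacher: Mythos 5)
The paper itself does not prove this lemma; it cites \cite[Lemma 7.1]{Lawler_multifractal}, so your proposal is being evaluated on its own terms.

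Your treatment of the first two bullets is correct: the stationary Fokker--Planck reduction $v'/v=-2q\tanh x$, the Beta-integral normalization, and the Gamma-function identity for $\int\cosh^{-2}x\,v\,dx$ all check out, and positive recurrence follows from non-degeneracy together with finiteness of the invariant measure.

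The third bullet has a genuine gap, namely in your estimate of the single-window exit probability $\Phi_u(y)=\Prob_*^{\,y}\{\sup_{0\le s\le 1}|J_s|\ge M\}$, $M=\operatorname{arccosh}u$. The scale-function/Gronwall/Doob route gives, as you write, $\Phi_u(y)\lesssim \cosh^{2q}y/(s(M)-|s(y)|)$, which is \emph{not} strong enough. On $|y|\le M/2$ the denominator is comparable to $s(M)\asymp e^{2qM}$ and $\cosh^{2q}y\asymp e^{2q|y|}$, so $\Phi_u(y)\lesssim e^{-2q(M-|y|)}$; integrating this against $\rho_k(y)\le Cv(y)\asymp e^{-2q|y|}$ over $[0,M/2]$ yields $\int_0^{M/2}e^{-2qM}\,dy\asymp Me^{-2qM}\asymp u^{-2q}\log u$, a logarithmic loss. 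Worse, on $M/2<|y|<M$ your estimate degenerates and you fall back on $\Phi_u\le 1$, contributing roughly $\int_{M/2}^{M}e^{-2qy}\,dy\asymp e^{-qM}\asymp u^{-q}$, far off the claimed exponent. What you actually need on $\Phi_u$ is a bound that is \emph{Gaussian} in $M-|y|$, not merely exponential: because the drift $-q\tanh J$ points toward the origin, $|J_t|$ started from $|y|$ is stochastically dominated by $|y|+|B_t|$ for a standard Brownian motion $B$, and the reflection principle gives $\Phi_u(y)\le 4\,e^{-(M-|y|)^2/2}$ for $|y|<M$ --- this is exactly the estimate \eqref{gaussestother} appearing later in the paper. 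With this bound your Markov/density decomposition closes cleanly: the substitution $z=M-y$ gives
\begin{equation*}
\int_0^M e^{-(M-y)^2/2}\,e^{-2qy}\,dy \;=\; e^{-2qM}\int_0^M e^{-z^2/2+2qz}\,dz \;\le\; C_q\,e^{-2qM},
\end{equation*}
and together with $\int_M^\infty e^{-2qy}\,dy\asymp e^{-2qM}$ you get $c\,u^{-2q}$ uniformly in $k$. So the outer architecture of your argument (Markov at time $k$, a uniform pointwise bound $\rho_k\le Cv$ for $k\ge1$, a direct estimate for $k<1$) is sound, but the scale-function/Doob ingredient must be replaced by the reflection-principle Gaussian bound. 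As a side remark, your Poincar\'e-plus-ultracontractivity route to $\rho_k\le Cv$ is heavier than necessary: by reversibility $\rho_k(y)=p_k(0,y)=p_k(y,0)\,v(y)/v(0)$, and Chapman--Kolmogorov gives $p_k(y,0)\le\sup_z p_1(z,0)<\infty$ for $k\ge1$, with $p_1$ bounded uniformly since the drift $-q\tanh$ is bounded, so $\rho_k(y)\le Cv(y)$ follows directly.
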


\begin{Lemma} There exists $c < \infty$ 
such that the following holds.  Suppose  
  $J_t$ satisfies \eqref{add.2} with $J_0 = x > 0.$ 
\begin{itemize} 
\item If $ y > x$,  
\begin{equation}  \label{gaussestother} 
  \Prob_*\left\{\max_{0 \leq s \leq 1} |J_s| \geq y 
  \right\}  \leq c\, \exp \left\{-\frac{(y-x)^2}{2} 
\right\}. \end{equation} 
\item If $ y < x$ and $  1 \leq t < (x-y)/q$,  
\begin{equation}  \label{gaussest} 
  \Prob_*\left\{\min_{0 \leq s \leq t} 
    J_s \leq y \right\} 
   \leq  c\, \exp\left\{ -\frac{(x -qt -y)^2}{2t} 
   \right\}.  
\end{equation} 
\end{itemize} 
\end{Lemma}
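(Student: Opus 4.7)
My plan is to prove both Gaussian tail bounds by combining a direct Brownian comparison with an exponential-martingale sharpening. Written in integral form, \eqref{add.2} reads
\[
J_t = x + B_t - q \int_0^t \tanh J_s \, ds,
\]
and the drift is uniformly bounded ($|q\tanh J_s| \leq q$) with the same sign as $J_s$.

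For \eqref{gaussest}, the crude one-sided estimate $\tanh J_s \leq 1$ already suffices: it gives $J_s \geq x + B_s - qs \geq (x - qt) + B_s$ on $[0,t]$, so $\{\min_s J_s \leq y\} \subset \{\min_s B_s \leq -(x-qt-y)\}$. Since $t < (x-y)/q$, the threshold $u := x - qt - y$ is strictly positive, and the reflection principle gives
\[
\Prob_*\{\min_{0 \leq s \leq t} B_s \leq -u\} = 2 \, \Prob_*(B_t \geq u) \leq c \, \exp\!\left(-\frac{u^2}{2t}\right),
\]
which is exactly \eqref{gaussest}.

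The inequality \eqref{gaussestother} is more delicate: the analogous crude argument only produces $e^{-(y-x-q)^2/2}$, which is weaker than claimed by the unbounded factor $e^{q(y-x)}$. To recover the sharp constant I would use the exponential martingale
\[
M_t = \exp\!\left(\theta J_t + \int_0^t \bigl[q\theta \tanh J_s - \theta^2/2\bigr] ds\right),
\]
which by It\^o's formula satisfies $dM_t = \theta M_t \, dB_t$ and hence $\E_*[M_t] = e^{\theta x}$. On the event $\{J_s \geq 0 \text{ for all } s \leq 1\}$ the integrand $q\theta \tanh J_s \geq 0$, so $e^{\theta J_t} \leq M_t \, e^{\theta^2/2}$; applying Doob's maximal inequality to $M$ and optimizing at $\theta = y - x > 0$ yields
\[
\Prob_*\bigl\{\max_{s \leq 1} J_s \geq y,\ J_s \geq 0 \text{ for all } s \leq 1\bigr\} \leq \exp\!\left(-\frac{(y-x)^2}{2}\right).
\]

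The remaining contributions to $\{\max|J_s| \geq y\}$ — paths that reach $y$ only after first visiting the negative half-line, and paths with $\min J_s \leq -y$ — are handled by the strong Markov property at the first hitting time of $0$, applying the same martingale argument but now from initial condition $0$. The required net displacement is at least $y$ and at least $x+y$ respectively, both exceeding $y-x$, so these pieces are $O(e^{-y^2/2})$ and are absorbed into $c$. The main obstacle is this sign-sensitive step: naively bounding $|\tanh|$ by $1$ discards the mean-reverting effect of the drift, which is exactly what suppresses large positive excursions above $x$ and delivers the sharp Gaussian rate.
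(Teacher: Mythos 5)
Your treatment of \eqref{gaussest} is the same as the paper's: both lower-bound the drift by $-q$, reduce to a Brownian motion with constant drift, and apply the reflection principle. For \eqref{gaussestother}, however, you take a genuinely different route from the paper (which compares $|J_t|$ to the absolute value of a Brownian motion by stochastic domination and then applies the reflection principle), and your route has a gap. The exponential-martingale argument on the event $\{J_s \geq 0 \text{ for all } s \leq 1\}$ is correct, and you are right that the naive $|\tanh| \leq 1$ bound loses an unbounded factor $e^{q(y-x)}$. But the ``remaining contributions'' are not actually disposed of by restarting at the first hitting time of $0$: from $J_0 = 0$ the process can immediately enter the negative half-line, so the inequality $e^{\theta J_t} \leq M_t e^{\theta^2/2}$ (which requires $q\theta\tanh J_s \geq 0$, i.e. $J_s \geq 0$) is again unavailable, and you face exactly the same obstruction you started with. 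The quantity $\Prob_*^0\{\max_{s\le1}J_s \geq y\}$ cannot be bounded by ``the same martingale argument from initial condition $0$''; the decomposition is circular, and the heuristic about ``net displacement'' does not turn into a proof because the drift actually helps the process return toward $0$ from below.

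To close the gap you need something sign-symmetric. One clean fix within your framework is to replace $e^{\theta J_t}$ by $\cosh(\theta J_t)$: a direct It\^o computation using $\sinh(\theta J_t)\tanh(J_t) \geq 0$ shows that $\cosh(\theta J_t)\,e^{-\theta^2 t/2}$ is a $\Prob_*$-supermartingale for every $\theta > 0$, and then optional stopping at $\tau_{\pm y}\wedge 1$ gives
\[
\Prob_*\left\{\max_{0\le s\le 1}|J_s|\geq y\right\} \leq \frac{\cosh(\theta x)}{\cosh(\theta y)}\,e^{\theta^2/2} \leq 2\,e^{-\theta(y-x)+\theta^2/2},
\]
and optimizing at $\theta = y-x$ yields \eqref{gaussestother} with no case analysis. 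Alternatively you can invoke the paper's comparison: since the drift $-q\tanh J$ always points toward the origin, $\max_{s\le1}|J_s|$ is stochastically dominated by $\max_{s\le1}|x+B_s| \leq x + \max_{s\le1}|B_s|$, and the reflection principle finishes the job. Either repair gives a complete proof; as written, your argument establishes the bound only on the event that $J$ never leaves the positive half-line.
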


\begin{proof}  Since the drift of $J_t$ points towards the origin, 
the distribution of $|J_t|$ is stochastically dominated by 
the absolute value of a Brownian motion. Hence by the reflection
principle (see, e.g., \cite[Proposition III.3.7]{Revuz_Yor}) if $V$ is a standard
Brownian motion 
\begin{align*}
\Prob_* \left\{ \max_{0 \leq s \leq 1} |J_s| \geq y \right\} & \le
\Prob_*\left\{ \max_{0 \leq s \leq 1} |V_s| \ge y-x \right\} \\
&\le 4 \, \Prob_*\left\{V_1 \ge y-x \right\}, 
\end{align*}
which gives the first estimate.
 Also,  $J_t \geq \tilde J_t$ where $\tilde J_t$ 
satisfies 
\[             d\tilde J_t = -q \, dt + dB_t. \] 
Note that  $W_t := \tilde
J_t + tq -x$ is a standard Brownian motion starting 
at the origin. We get for $ y < x$ and $1 \leq t < (x-y)/q$, again
using the reflection principle,
\begin{align*}
 \Prob_*\left\{\min_{0 \leq s \leq t} 
    J_s \leq y \right\} & \le \Prob_* \left\{ \max_{0 \leq s \leq t}
    W_s \ge x-y-qt\right\} \\
& = 2\,\Prob_*\left\{W_t \ge x-y-qt \right\},
\end{align*}  
and the second estimate follows.
\end{proof}

\subsection{Proof of Lemma \ref{nov9.prop20} }

Lemma \ref{nov9.prop20} 
 extends a result in \cite{Lawler_multifractal} where the 
result was proved for a certain range of $r$ (including 
$r=1, a > 1/4$ which was most important for that paper).  
We start by restating it in terms of $h$ with an added lower bound (and upgrading it 
to a theorem).   
\begin{Theorem} \label{mar11.theorem}  
If $r < r_c$, there exists $0< c_1,c_2 < \iy$ such that for all $t \geq 1$, 
\[ c_1 \, t^{-\zeta} \leq 
\E\left[|h_{t^2}'(i)|^\lambda \right] 
  \leq c_2 \, t^{-\zeta}. \] 
\end{Theorem}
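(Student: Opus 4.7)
The plan is to reduce the moment estimate to a computation under the Girsanov-tilted measure $\Prob_*$, where the radial diffusion $J_t$ is positive recurrent. The martingale $N_u = e^{a\lambda L_u}e^{a\zeta u}(\cosh J_u)^r$ serves as the bridge, and the hypothesis $r < r_c$ (equivalently, $q > 0$) is exactly what makes $J$ recurrent under $\Prob_*$, so the proof should use this assumption only through the existence of the invariant density $v$.

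First I would use the distributional identity of the preceding lemma to write $\E[|h_{t^2}'(i)|^\lambda] = \E[e^{a\lambda L_{\eta(t^2)}}]$. Since $N_u$ is a non-negative martingale starting at $1$, and $\eta(t^2)$ is a stopping time that is finite almost surely (the integrand $e^{2ar}\cosh^2 J_r$ explodes at infinity), optional stopping on $[0,\eta(t^2)\wedge n]$ combined with monotone convergence yields the Girsanov identity
$$\E\!\left[|h_{t^2}'(i)|^\lambda\right] \;=\; \E_*\!\left[e^{-a\zeta\,\eta(t^2)}\,(\cosh J_{\eta(t^2)})^{-r}\right],$$
where under $\Prob_*$, $J$ satisfies \eqref{add.2}.

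Second, I would show that under $\Prob_*$ one has $\eta(t^2) = \tfrac{1}{a}\log t + R(t)$, where $R(t)$ has tails uniformly controlled in $t$. The reason is that $\sigma(u) = \int_0^u e^{2ar}\cosh^2 J_r\,dr$ is dominated by its integrand near $r=u$, and the invariant density $v$ forces $\log \sigma(u) - 2au$ to be tight. Combining this with \eqref{add.5}, \eqref{gaussestother} and \eqref{gaussest} should give exponential tails on $R(t)$ strong enough to absorb factors like $e^{-a\zeta R(t)}$ into a finite moment. Plugging back in gives the upper bound: bound $(\cosh J)^{-r}\leq 1$ if $r\geq 0$, or use the exponential tail of $|J|$ under $\Prob_*$ if $r<0$, to conclude that the residual expectation is $O(1)$ and hence $\E[|h_{t^2}'(i)|^\lambda] \leq c_2\,t^{-\zeta}$. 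For the matching lower bound, restrict to the positive-probability event $\{|R(t)|\leq C,\ |J_{\eta(t^2)}|\leq C\}$, on which the integrand is at least $c_1\,t^{-\zeta}$.

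The main obstacle will be making the tightness of $R(t)$ uniform and quantitative enough to control the joint law of $R(t)$ and $J_{\eta(t^2)}$, particularly near the boundary $r \to r_c$, where the drift $-q\tanh J_s$ is weak and the recurrence of $J$ under $\Prob_*$ is slow, and at large negative $r$, where $(\cosh J)^{-r}$ has fat right tails that must be dominated by those of $\cosh J$ under $\Prob_*$. Both endpoints are precisely the regimes where $r < r_c$ is just sufficient, which reinforces that this is the sole hypothesis driving the entire estimate.
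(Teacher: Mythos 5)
Your starting identity is correct and corresponds to a genuinely different reduction than the paper uses. The change-of-measure step is valid: since $\sigma(s)\geq\frac{1}{2a}(e^{2as}-1)$, the stopping time $\eta(t^2)$ is deterministically bounded by $\frac{1}{a}\log t+O(1)$, and $N$ is a true martingale (bounded coefficients), so optional stopping gives
$\E\bigl[|h_{t^2}'(i)|^\lambda\bigr]=\E_*\bigl[e^{-a\zeta\,\eta(t^2)}(\cosh J_{\eta(t^2)})^{-r}\bigr]$
exactly. The paper does not use this formula; instead it first smooths (averaging the moment over $u\in[t^2,2t^2]$, justified by scaling and distortion) and then changes variables $u=\sigma(s)$ to pull everything back to \emph{deterministic} times, arriving at an integral of $\E_*\bigl[(\cosh J_s)^{2-r};\,t^2\leq\sigma(s)\leq 2t^2\bigr]$. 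Your route works at a stopping time; theirs works at fixed times and integrates out the stopping time.

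However, your third step as written is wrong, and this is a real gap rather than a detail. Writing $\eta(t^2)=\frac1a\log t+R(t)$, the exponential left tail of $R(t)$ has rate $2aq$ (this follows from \eqref{mar11.2}: $\Prob_*\{R(t)\leq -w\}\leq c\,e^{-2aqw}$), so the naive bound you propose for $r\geq 0$, namely discarding $(\cosh J_{\eta(t^2)})^{-r}\leq 1$ and bounding $\E_*[e^{-a\zeta R(t)}]$, requires $2q>\zeta$. This fails in part of the admissible range: for $a>\tfrac14$ (i.e.\ $\kappa<8$) one has $\zeta(r_c)>0$ while $q\to 0^+$ as $r\uparrow r_c$, so $2q<\zeta$ for $r$ near $r_c$. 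On this range $\E_*[e^{-a\zeta R(t)}]$ diverges, and you must exploit the anticorrelation: when $R(t)$ is very negative it is because $\cosh J$ was large near time $\eta(t^2)$, which makes $(\cosh J_{\eta(t^2)})^{-r}$ small. Quantifying this joint behavior is precisely the content of the paper's Lemmas \ref{mar10.lem1} and \ref{mar10.lem2}, where the estimate $\E_*\bigl[(\cosh J_t)^{2-r};\,u^2\leq e^{-2at}\sigma(t)\leq 2u^2\bigr]\leq c\,u^\theta$ with $\theta<2-\zeta$ is proved by splitting on the size of $\cosh J_t$ and, in the hard case $r>a+1$, by conditioning on the last large excursion of $J$ and running a Gaussian tail estimate. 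Your final paragraph acknowledges this as ``the main obstacle,'' but the proposal leaves it unresolved; the assertion that the simple bounds already ``conclude that the residual expectation is $O(1)$'' is not correct for all $r<r_c$. A similar issue appears for large negative $r$ (where you would need $|r|<2q$, i.e.\ $r>-4a-1$, for the invariant-density bound alone to close). Until the joint tail of $(R(t),J_{\eta(t^2)})$ is controlled at the level of the paper's lemmas, the proposed proof does not go through.
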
 
 
The heuristic argument is fairly straightforward, so let 
us consider that first.  Consider the martingale $N_t$ which 
we can write as 
\[                N_t = |h_{\sigma(t)}'(i)|^\lambda 
   \, e^{at \zeta} \, S_{\sigma(t)}^{-r}. \] 
We know that $\E[N_t] = 1$.  
If $r < r_c$, then under $\Prob_*$, since $J$ is positive recurrent, $S_{\sigma(s)}$ tends to be of order $1$ for 
$s < t$.  Hence we would expect that $\sigma(t) \approx 
e^{2at}$ and hence we would expect 
\[    \E\left[|h_{e^{2at}}'(i)|^\lambda \, 
               e^{at \zeta} \right] \asymp 1. \] 
We will only prove the upper bound in Theorem \ref{mar11.theorem} 
in this subsection; 
the lower bound  
follows from the 
work of the next subsection, which uses the upper bound. 
  The next lemma gives a quantitative bound on the 
assertion $\sigma(t) \approx e^{2at}$.  As a slight abuse 
of notation, in this section, if $E$  is an event we   
also write $E$ for the indicator function of the event.

\begin{Lemma}  There exists $c < \infty$ such that for all $u > 0$, 
\begin{equation}  \label{mar11.2} 
   \Prob_*\{\sigma(t) \geq u^2 \, e^{2at} \} \leq c \, u^{-2q}.  
\end{equation} 
\end{Lemma}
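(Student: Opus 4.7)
The plan is to exploit the exponential weight $e^{2as}$ in $\sigma(t)$, which concentrates mass near $s = t$, together with the polynomial tail estimate \eqref{add.5}. Since the bound is trivial for $u \leq 1$ (the probability is at most $1 \leq u^{-2q}$), I assume $u \geq 1$.

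The key deterministic step is to partition $[0,t]$ into unit-length pieces working backwards from $t$: for $j = 0, 1, \ldots, \lfloor t \rfloor$, set $I_j = [\max\{t-j-1,\, 0\},\, t-j]$ and $M_j = \sup_{s \in I_j} \cosh J_s$. Using $e^{2as} \leq e^{2a(t-j)}$ and $|I_j| \leq 1$ on each piece, one obtains the deterministic bound
\[
  \sigma(t) \;\leq\; e^{2at} \sum_{j \geq 0} e^{-2aj} \, M_j^2,
\]
so the event $\{\sigma(t) \geq u^2 e^{2at}\}$ is contained in $\{\sum_j e^{-2aj} M_j^2 \geq u^2\}$. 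A weighted union bound with weights $w_j = c_0/(j+1)^2$ (with $c_0 = 6/\pi^2$, so $\sum_j w_j = 1$) then contains this event in $\bigcup_j \{M_j \geq u\, e^{aj} \sqrt{w_j}\}$.

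To conclude, I would invoke \eqref{add.5} on each $I_j$: each $I_j$ is covered by at most two integer unit intervals $[k, k+1]$, and the constant in \eqref{add.5} is uniform in $k$, so
\[
  \Prob_*\bigl\{ M_j \geq u \, e^{aj} \sqrt{w_j}\bigr\} \;\leq\; c\bigl(u \, e^{aj} \sqrt{w_j}\bigr)^{-2q} \;\leq\; c'\, u^{-2q}\, e^{-2ajq}\,(j+1)^{2q}.
\]
Summing the union bound over $j \geq 0$, the geometric decay $e^{-2ajq}$ (valid because $a, q > 0$) dominates the polynomial factor $(j+1)^{2q}$, yielding $\Prob_*\{\sigma(t) \geq u^2 e^{2at}\} \leq C u^{-2q}$ as required. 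The main point to verify carefully is the $k$-uniformity in \eqref{add.5}, which is precisely what that lemma supplies; no further machinery beyond a union bound and the previously established tail estimate is needed, so there is no real obstacle, only bookkeeping.
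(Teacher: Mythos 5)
Your proof is correct and follows essentially the same route as the paper: partition time into unit intervals working backwards from $t$, use the $1/(k+1)^2$-weighted pigeonhole to reduce $\{\sigma(t) \geq u^2 e^{2at}\}$ to a union of single-interval tail events, apply \eqref{add.5} (with its $k$-uniformity) on each, and sum the resulting $e^{-2aqk}k^{2q}$ series. The paper packages the weights into the threshold function $u\,e^{a(t-s)}(t-s+1)^{-1}$ rather than writing $w_j$ explicitly, but the decomposition, the key estimate invoked, and the summation are identical.
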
 
 
\begin{proof} 
 
Suppose $\cosh J_s \leq u \, e^{a(t-s)} \, (t-s+1)^{-1}$ 
for all $0 \leq s \leq t$.  Then, 
\[   \sigma(t) = \int_0^t e^{2as} \, \cosh^2 J_s \, ds 
    \leq u^2\, e^{2at} \, \int_0^t(t-s+1)^{-2} \,ds < u^2 e^{2at}. \] 
Therefore, 
\[ \Prob_* \{\sigma(t) \geq u^2 \, e^{2at}\} \leq \Prob_*\left[ 
 \bigcup_{k=1}^\infty K_k\right] \leq \sum_{k=1}^\infty 
  \Prob_*(K_k),  \] 
where $K_k= K_{k,t} 
$ denotes the event 
\begin{equation}  \label{kdef} 
 K_k = \left\{\cosh J_{t-s}   
  \geq u e^{a(k-1)} \,(k+1)^{-1} \mbox{ for some } k-1 \leq s < k\right\}. 
\end{equation} 
By \eqref{add.5}, 
\begin{equation}  \label{mar6.10} 
\Prob_*(K_k) 
  \leq c \, u^{-2q} \, e^{-2aqk} \, k^{2q}, 
\end{equation} 
and hence we can sum over  $k$ to get the result. 
\end{proof}

The next lemma is a useful ``smoothing'' result that lets us 
consider  the average of $\E[|h'_{s}(i)|^\lambda]$ over $t^2 \leq s \leq 2t^2$ 
instead of $\E[h'_{t^2}(i)|^\lambda]$. 
 
\begin{Lemma}  There exists $c < \infty$ such that 
\begin{align*} 
   \E\left[|{h_{t^2}}'(i)|^\lambda \right] 
  & \leq   \frac{c}{t^2}  \int_0^ 
     \infty e^{2as} \, \E\left[(\cosh^2J_s) 
 \,  |h_{\sigma(s)}'(i)|^\lambda \, I_s \right] \,ds \\ 
 & =    \frac{c}{t^2}  \int_0^ 
     \infty e^{as(2-\zeta)} \, \E_*\left[ 
  (\cosh J_s) ^{{2-r}}  
   \, I_s \right] \,ds , 
\end{align*} 
where $I_s = I_{s,t}$ denotes the  
event 
\[     I_s = \{ t^2 \leq \sigma(s) \leq 2\, t^2\} . \] 
\end{Lemma}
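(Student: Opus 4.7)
The plan is to reduce the inequality to SLE scaling combined with the Koebe distortion theorem. Write $T = t^2$ throughout. First I would perform the change of variables $u = \sigma(s)$ in the integral on the right-hand side. Since $\sigma$ is strictly increasing with $\sigma'(s) = e^{2as}\cosh^2 J_s$, and $I_s$ becomes $1_{u \in [T, 2T]}$, this rewrites the displayed right-hand side as $(c/T)\,\E\!\int_T^{2T}|h'_u(i)|^\lambda\, du$. So the task reduces to showing
\[
\E|h'_T(i)|^\lambda \le \frac{c}{T}\,\E\int_T^{2T}|h'_u(i)|^\lambda\, du.
\]

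The second displayed equality (involving $\E_*$) is then immediate from the definition of $\Prob_*$: the identity $|h'_{\sigma(s)}(i)|^\lambda = e^{a\lambda L_s}$ combined with the martingale factorization $N_s = e^{a\lambda L_s}\, e^{a\zeta s}\, (\cosh J_s)^r$ yields the pointwise identity
\[
e^{2as}\cosh^2 J_s\cdot |h'_{\sigma(s)}(i)|^\lambda = N_s \cdot e^{as(2-\zeta)}(\cosh J_s)^{2-r},
\]
after which the defining property $\E[N_s X] = \E_*[X]$ for $X$ measurable with respect to $\{W_u : 0 \le u \le s\}$ converts the expectation over $\Prob$ into one over $\Prob_*$.

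For the main inequality I would use SLE scaling. A direct verification from the reverse Loewner ODE together with Brownian scaling shows that $h'_{c^2 t}(c z)$ and $h'_t(z)$ have the same distribution for every $c > 0$. Taking $c = \sqrt s$, $z = i$, $t = T$ gives $\E|h'_T(i)|^\lambda = \E|h'_{sT}(i\sqrt s)|^\lambda$ for every $s > 0$. For $s \in [1,2]$, the point $i\sqrt s$ lies within distance $\sqrt s - 1 \le \sqrt 2 - 1 < 1$ of $i$, while $\dist(i,\partial \HH) = 1$, so the distortion theorem \eqref{distortion} applied to the conformal map $h_{sT}$ gives a universal constant $C = C(\lambda) < \infty$ such that $|h'_{sT}(i\sqrt s)|^\lambda \le C\, |h'_{sT}(i)|^\lambda$ uniformly in $s \in [1,2]$ (and for either sign of $\lambda$, since both the upper and lower Koebe bounds are strictly positive and finite).

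Finally I would conclude by averaging over $s$. Using the trivial identity $\E|h'_T(i)|^\lambda = \int_1^2 \E|h'_T(i)|^\lambda\, ds$, substituting the scaling equality inside the integral, applying the distortion bound, and changing variables $u = sT$:
\[
\E|h'_T(i)|^\lambda \le C\int_1^2 \E|h'_{sT}(i)|^\lambda\, ds = \frac{C}{T}\int_T^{2T} \E|h'_u(i)|^\lambda\, du,
\]
which is exactly the desired inequality. There is no real obstacle here: the scaling identity is an elementary consequence of Brownian scaling, the distortion bound is already stated in the paper, and all remaining steps are Fubini and a change of variable. The whole proof is a few lines once one recognizes that SLE scaling relates $|h'_T(i)|$ to $|h'_{sT}(i\sqrt s)|$, so that integrating $s$ over $[1,2]$ converts the value at time $T$ into an average over $[T,2T]$.
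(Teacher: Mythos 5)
Your argument is correct and follows essentially the same route as the paper: both prove the key inequality $\E|h'_{t^2}(i)|^\lambda \le (c/t^2)\int_{t^2}^{2t^2}\E|h'_u(i)|^\lambda\,du$ by combining SLE scaling with the Koebe distortion theorem, then obtain the first display by the change of variables $u=\sigma(s)$ and the second by the martingale identity $N_s = e^{a\lambda L_s}e^{a\zeta s}(\cosh J_s)^r$ together with the definition of $\Prob_*$. The only cosmetic difference is that you scale from time $T$ up to $sT$ while the paper scales from $u$ down to $t^2$; the content is identical.
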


\begin{proof} 
 By scaling and the distortion theorem, if $t^2 \leq u \leq 2t^2$,  
\[  \E\left[{|h_{u}}'(i)|^\lambda \right]  
           = \E\left[{|h_{t^2}}'(it/\sqrt u)|^\lambda \right]  
 \geq c\, \E\left[|{h_{t^2}}'(i)|^\lambda \right] 
.\] 
  Therefore, 
\[   \E\left[|{h_{t^2}}'(i)|^\lambda \right] 
    \leq \frac{c}{t^2} \int_{t^2}^{2 t^2} 
    \E\left[|{h_{u}}'(i)|^\lambda \right] \, du . \]

If we let $s = \sigma^{-1} (u)$, we can change variables and write 
 \[ 
 \int_{t^2}^{2 t^2} 
     |{h_{u}}'(i)|^\lambda   \, du  
  =  \int_0^\infty |h_{\sigma(s)}'(i)|^\lambda   \,  
  I_s \, du = \int_0^\infty e^{2as} \, (\cosh^2 J_s) 
 \,  |h_{\sigma(s)}'(i)|^\lambda   \,I_s  \,ds. \] 
By taking expectations we get 
 the inequality in the lemma, and the 
equality follows from the definition of $\E_*$. 
 \end{proof} 

Since
\[  \sigma(t) \geq \int_0^t e^{2as} \, ds =
   \frac{1}{2a} \, \left[e^{2at} - 1 \right], \] 
 there is 
a $c$ such that $I_{s,e^{at}}$ is empty if $s \geq t + c$. 
Hence the  
previous proposition implies  that there exists 
a $c < \infty$ such that  
\[\E\left[|{h_{e^{2at}}}'(i)|^\lambda \right] 
  \leq  \hspace{1.5in} 
\] 
\begin{equation}  \label{add.7} 
  c \, e^{-2at} \, \int_0^{t + c} e^{as(2-\zeta)} \, \E_*\left[ 
  (\cosh J_s) ^{{2-r}}  
   \, ;\, e^{2at} \leq   
\sigma(s) \leq 2e^{2at}  \right] \,ds . 
\end{equation} 
 
\begin{Lemma}  \label{mar10.lem1} 
There exists $c < \infty$ such that if $u \geq 1$,  
\[  \E_*[(\cosh J_t)^{{2-r}} ; 
 u^2  \leq e^{-2at} \, \sigma(t) \leq 2u^2] 
 \leq c \, u^{(2-r)_+ - 2q}. \]  
\end{Lemma}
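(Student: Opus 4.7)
The proof naturally splits by the sign of $2-r$.  If $r \geq 2$, then $(2-r)_+ = 0$ and the factor $(\cosh J_t)^{2-r}$ is at most $1$, so the statement reduces to the estimate $\Prob_*(\sigma(t) \geq u^2 e^{2at}) \leq c\, u^{-2q}$ from \eqref{mar11.2}.  From now on I assume $r < 2$, so $(2-r)_+ = 2-r > 0$, and I denote the event in the statement by $A_u$.  I would split the expectation according to $\{\cosh J_t \leq 2u\}$ versus $\{\cosh J_t > 2u\}$.  The first piece is immediate: the pointwise bound $(\cosh J_t)^{2-r} \leq (2u)^{2-r}$ combined with \eqref{mar11.2} yields $c\, u^{2-r-2q}$, which matches the target.

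The second piece is where the real work lies.  Using the layer-cake identity
\[
\E_*\bigl[(\cosh J_t)^{2-r};\, A_u\cap \{\cosh J_t > 2u\}\bigr]
= (2u)^{2-r}\Prob_*(A_u \cap \{\cosh J_t > 2u\}) + (2-r)\int_{2u}^\infty v^{1-r}\, \Prob_*(A_u \cap \{\cosh J_t \geq v\})\, dv,
\]
everything reduces to the tail estimate
\[
\Prob_*(A_u \cap \{\cosh J_t \geq v\}) \leq c\, v^{-2q} \exp\bigl(-c'\, v^2/u^2\bigr), \qquad v \geq 2u.
\]
Granting this, substituting $w = v/u$ turns the integral into $c\, u^{2-r-2q} \int_2^\infty w^{1-r-2q} e^{-c' w^2}\,dw$, a finite constant times $u^{2-r-2q}$, which together with the first piece closes the argument.

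To prove the tail estimate, set $\delta_* := c\, u^2/v^2$ and let $\tau := \sup\{s \leq t : \cosh J_s \leq v/2\}$.  On $A_u \cap \{\cosh J_t \geq v\}$ with $v \geq 2u$, the lower bound
\[
2 u^2 e^{2at} \geq \sigma(t) \geq \int_\tau^t e^{2as} (v/2)^2\, ds \geq c\, v^2 (t-\tau)\, e^{2at}
\]
forces $t - \tau \leq \delta_*$, so $\cosh J_s \leq v/2$ for some $s \in [t-\delta_*, t]$.  By the Markov property at $t - \delta_*$, stochastic dominance of $|J|$ by a Brownian motion with bounded drift, and the scaled version of \eqref{gaussestother} over an interval of length $\delta_*$, the conditional probability $\Prob_*(|J_t| \geq \operatorname{arcosh}(v) \mid J_{t-\delta_*} = x)$ is at most $c \exp\bigl(-(\operatorname{arcosh}(v) - |x|)^2/(2\delta_*)\bigr)$ for $|x| \leq \operatorname{arcosh}(v)$.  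Integrating against the distribution of $J_{t-\delta_*}$ by dyadically decomposing according to $|J_{t-\delta_*}| \in [k,k+1]$ and using the tail bound $\Prob_*(|J_{t-\delta_*}| \geq k) \leq c\, e^{-2qk}$ coming from \eqref{add.5} produces a geometrically convergent series whose dominant term comes from $|J_{t-\delta_*}| \approx \operatorname{arcosh}(v/2)$ and yields exactly $c\, v^{-2q} \exp(-c'\, v^2/u^2)$.

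The main obstacle is precisely this combination.  Individually, $\Prob_*(A_u) \leq c\, u^{-2q}$ and the Gaussian climb estimate $\exp(-c v^2/u^2)$ are each easy, but simply taking the minimum of the two bounds loses a polynomial factor and delivers only $c\, u^{2-r}$ rather than $c\, u^{2-r-2q}$.  Upgrading the constant-factor climb estimate to the joint product $v^{-2q} \exp(-c'v^2/u^2)$ by extracting the invariant-type decay through the dyadic integration described above is what produces the sharp exponent $(2-r)_+ - 2q$ appearing in the lemma.
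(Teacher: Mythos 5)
Your approach is genuinely different from the paper's, and if the tail estimate you invoke were proved, it would in fact give a \emph{sharper} bound than the intermediate estimate the paper uses. The paper decomposes according to $\cosh J_t \approx u\,e^k$, applies the stopping time $\eta_t=\inf\{s\geq (t-1)\vee 0 : \cosh^2 J_s=2e^{2a}u^2\}$ at the \emph{fixed} level of order $u$ over the \emph{fixed} window $[t-1,t]$, and combines $\Prob_*\{\eta_t<t\}\leq c\,u^{-2q}$ with a Gaussian climb estimate to get $\Prob_*(V_k)\leq c\,u^{-2q}e^{-\alpha k^2}$, which after multiplying by $u^{2-r}e^{k(2-r)}$ is summable in $k$. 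Your layer-cake reduction and the reduction to the case $r<2$ are both clean, and your observation that the $\sigma(t)$ constraint forces the \emph{last} dip below $v/2$ into the window $[t-\delta_*,t]$ with $\delta_*\asymp u^2/v^2$ is correct.

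However, there is a genuine gap in the proof of the tail estimate
$\Prob_*(A_u\cap\{\cosh J_t\geq v\})\leq c\,v^{-2q}\exp(-c'v^2/u^2)$.
After establishing the dip, you condition on $J_{t-\delta_*}$ and invoke the bound
$\Prob_*\bigl(|J_t|\geq \operatorname{arcosh}(v)\mid J_{t-\delta_*}=x\bigr)\leq c\exp\bigl(-(\operatorname{arcosh}(v)-|x|)^2/(2\delta_*)\bigr)$
for $|x|\leq \operatorname{arcosh}(v)$. This bound encodes only the endpoint condition $\cosh J_t\geq v$ and \emph{not} the dip you just derived; the dip never reappears in the integration. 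As a result, the dominant contribution does not come from $|J_{t-\delta_*}|\approx\operatorname{arcosh}(v/2)$ as you claim, but from $|J_{t-\delta_*}|$ just below $\operatorname{arcosh}(v)$, where the Gaussian factor is of order $1$ and the prior mass from \eqref{add.5} is of order $v^{-2q}$. (The range $|J_{t-\delta_*}|>\operatorname{arcosh}(v)$, for which your conditional bound does not apply, contributes another $O(v^{-2q})$.) The integration therefore yields only $c\,v^{-2q}$, without the exponential factor, and then
\[
\int_{2u}^\infty v^{1-r}\,v^{-2q}\,dv
\]
diverges unless $r<4a-1$, which is not implied by $r<\min\{2,r_c\}$ (take, e.g., $a=1/2$, $1<r<3/2$). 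To close the gap you must feed the dip back into the conditional estimate: on the event, $J$ performs an upcrossing of the fixed-width band $[\operatorname{arcosh}(v/2),\operatorname{arcosh}(v)]$ (width $\to\log 2$) entirely inside the short window $[t-\delta_*,t]$, and it is \emph{this} forced upcrossing over a time scale $\delta_*$ that produces $\exp(-c'/\delta_*)=\exp(-c'v^2/u^2)$; the $v^{-2q}$ then comes from \eqref{add.5} applied to the passage through level $v/2$ in $[t-1,t]$, glued together by the strong Markov property at the first hitting of $\operatorname{arcosh}(v/2)$ after $t-1$.
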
 
 
\begin{Remark}  Roughly speaking, we expect that if 
$\sigma(t) \asymp u^2 \, e^{2at}$, then $\cosh J_t 
\approx u$.  Hence, we would guess 
\[  \E_*[(\cosh J_t)^{{2-r}} ; 
 u^2  \leq e^{-2at} \, \sigma(t) \leq 2u^2] 
\approx \hspace{1in} \] 
\[ \hspace{1in}  u^{2-r} \, \Prob_*\{ 
u^2  \leq e^{-2at} \, \sigma(t) \leq 2u^2  \} 
 \leq c \, u^{2-r-2q} , \] 
where the probability is estimated by \eqref{mar11.2}. 
This lemma makes the argument rigorous but  
only gives a weaker result 
  for $r > 2$.  Lemma \ref{mar10.lem2} 
below establishes the stronger result for some values 
of $r >2$. 
\end{Remark}

\begin{proof}   
Let $I_t = I_{t,u}$ be the  the event 
$\{u^2  \leq e^{-2at} \, \sigma(t) \leq 2u^2\}.$ 
We claim that $I_t$ is contained in the event 
\[    A = A_{t,u}  = \left\{ 
\min_{(t-1) \vee 0 \leq s \leq t } 
\cosh^2 J_s \leq  2u^2e^{2a}  \right\} 
 \] 
Indeed, this is obvious for $t \leq 1$  since $\cosh J_0 =1$ 
and if $t >1$ and $\cosh^2 J_s > 2e^{2a}u^2$ for $t-1 
\leq s \leq t$, then 
 \[  \sigma(t) \geq \int_{t-1}^t e^{2as} \, \cosh^2 J_s 
  \, ds  > 2u^2\, e^{2at}. \]

Let 
\[      V_k = V_{k,t,u} 
 = I_t \cap \left\{   u \, e^{(k-1)} \leq \cosh J_t < 
                 u\,  e^{k} \ 
\right\} . \]  
Then, 
\[  \E_*[ (\cosh J_t)^{{2-r}}  \, 
    I_t] = \sum_{k=-\infty}^\infty  \E_*[ (\cosh J_t)^{{2-r}} 
   \, 
    V_k]. \] 
Note that 
\begin{equation}  \label{mar11.4} 
\E_*[ (\cosh J_t)^{{2-r}} 
   \, 
    V_k] \asymp u^{2-r} \, e^{k(2-r)} \, \Prob_*(V_k). 
\end{equation}

We will first show that  
\begin{equation}  \label{mar10.3} 
  \sum_{k=1}^\infty  \E_*[ 
(\cosh J_t)^{{2-r} } \, 
    V_k] \leq c \, u^{2-r - 2q} .  
\end{equation} 
Let $k_0$ be an integer such that $e^{k-1} 
> 2e^{2a}$ for $k \geq k_0$. Then, 
\begin{align*}  \sum_{k=1}^{k_0}  \E_*[ 
(\cosh J_s)^{{2-r} } \, 
    V_k] & \asymp   
   u^{2-r}   \sum_{k=1}^{k_0}   
\Prob_*(V_k)\\ & 
 \leq  u^{2-r} \, \Prob_*\{ 
     \cosh^2 J_t \geq u^2\} 
  \leq c  
  u^{2-r-2q}.  
\end{align*} 
The last inequality uses \eqref{add.5}. 
 
If $k > k_0$,  let  \[\eta_t = \inf\left \{s \geq (t-1) \vee 0: 
  \cosh^2 J_s =  2e^{2a}u^2 \right\}. \]  
Since $V_k \subset A$ and $\cosh^2 
J_t >  2e^{2as}u^2 $ on $V_k$, we know that 
on the event $V_k$,  $t-1 \leq \eta_t 
 < t$. Hence we can estimate 
\[   \Prob_*(V_k) \leq \Prob_*\{\eta_t < t\} 
 \, \Prob_*\left\{\cosh^2 J_t \geq u^2 \, e^{k-1} \; \mid 
  \; \eta_t < t \right\}.\] 
 By \eqref{add.5},  
\[ \Prob_*\{\eta_t < t\} \leq \Prob_*\left\{ 
 \cosh^2 J_s \geq 2e^{2a} u^2 \mbox{ for some } 
 t-1 \leq s \leq t \right\}  
 \leq c \, u^{-2q}.\] 
Using \eqref{gaussestother}, we can see that there 
exist $c,\alpha$ such that  
\[  \Prob_*\left\{\cosh^2 J_t \geq u^2 \, e^{k-1} \; \mid 
  \; \eta_t < t \right\} \leq c \, e^{-\alpha k^2}. \] 
 Hence, plugging into \eqref{mar11.4}, we 
have  
\[ \E_*[ (\cosh J_t)^{{2-r}} 
   \, 
    V_k]  \leq c \, u^{2-r-2q} \, e^{k(2-r)} \, e^{- 
 \alpha k^2}. \] 
Therefore, 
\[  u^{2q+r-2} \,  
 \sum_{k=1}^\infty  \E_*[ 
(\cosh J_t)^{{2-r} } \, 
    V_k] \leq c \sum_{k=0}^\infty  
 e^{k(2-r)} \, e^{- 
 \alpha k^2 < \infty}. \] 
This proves \eqref{mar10.3}. 
 
Let  
$  \tilde V = \bigcup_{k=0}^\infty  
   V_{-k} . $ 
On the event $\tilde  V$, 
\[     1 \leq \cosh J_t \leq u . \] 
Hence $(\cosh J_t)^{2-r} \leq u^{(2-r)_+}$ and, 
using \eqref{mar11.2},  
\[ \E_*[ (\cosh J_t)^{{2-r}} 
   \, 
    \tilde V] \leq u^{(2-r)_+} 
  \, \Prob_*(\tilde V) \leq  
    u^{(2-r)_+} \, \Prob_*(I_t) \leq c \, 
  u^{(2-r)_+} \, u^{-2q}. \] 
\end{proof}

\begin{Lemma}  \label{mar10.lem2} 
If $r > a+1$, there exists $c < \infty$ such that 
for all $u$,  
\[  \E_*[(\cosh J_t)^{{2-r}} ; 
 u^2  \leq e^{-2at} \, \sigma(t) \leq 2u^2] 
 \leq c \, u^{2-r - 2q}. \]  
\end{Lemma}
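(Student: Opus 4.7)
The plan is to refine Lemma \ref{mar10.lem1} in the regime $r > 2$, which is the only range where the new bound $u^{2-r-2q}$ is genuinely stronger than the earlier $u^{(2-r)_+-2q} = u^{-2q}$; for $r \leq 2$ the two bounds coincide. We may assume $u \geq 1$, since otherwise the claim already follows from Lemma \ref{mar10.lem1}. In that proof, the piece $\tilde V = I_t \cap \{\cosh J_t \leq u\}$ was handled by the crude bound $(\cosh J_t)^{2-r} \leq 1$ (true when $r>2$ since $\cosh \geq 1$), losing a factor of $u^{r-2}$. The complementary piece $I_t \cap \{\cosh J_t > u\}$ is already adequate: on each shell $V_k$ with $k \geq 1$, $(\cosh J_t)^{2-r} \leq u^{2-r}$, and combined with $\Prob_*(V_k) \leq c u^{-2q}e^{-\alpha k^2}$ from the proof of Lemma \ref{mar10.lem1} one gets $\sum_{k\geq 1}\E_*[(\cosh J_t)^{2-r};V_k] \leq c u^{2-r-2q}$. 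It thus suffices to prove $\E_*[(\cosh J_t)^{2-r};\tilde V] \leq cu^{2-r-2q}$.

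Decompose $\tilde V = \bigcup_{k \geq 0} V_{-k}$ with $V_{-k} = I_t \cap \{ue^{-k-1} \leq \cosh J_t \leq ue^{-k}\}$; only $O(\log u)$ shells are nonempty since $\cosh J_t \geq 1$. On $V_{-k}$, $(\cosh J_t)^{2-r} \asymp u^{2-r}e^{k(r-2)}$, so it is enough to establish
\[
\Prob_*(V_{-k}) \leq c\,u^{-2q}\,e^{-2ak}
\]
up to polynomial factors in $k$: summing would give $\E_*[(\cosh J_t)^{2-r};\tilde V] \leq cu^{2-r-2q}\sum_{k\geq 0} e^{k(r-2-2a)}$, a convergent geometric series since $r < r_c = 2a+1/2 < 2+2a$. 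The intuition is that $I_t$ forces a large excursion of $J$ somewhere in $[0,t]$, while $V_{-k}$ forces $J_t$ back to a small value, yielding a quantitative large-deviation cost.

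Concretely, partition $[0,t]$ into unit intervals and set $M_j = \max_{s \in [t-j, t-j+1]} \cosh J_s$. From $u^2 e^{2at} \leq \sigma(t) \leq C\sum_{j \geq 1} M_j^2 e^{2a(t-j+1)}$, pigeonhole using $\sum j^{-2} < \infty$ produces some $j \in \{1, \ldots, \lceil t\rceil\}$ with $M_j \geq cu e^{aj}/j$, whose probability is bounded by $c(ue^{aj}/j)^{-2q}$ via \eqref{add.5}. Conditioning at the hitting time $\sigma_j$ of the corresponding level $\approx \log u + aj - \log j$ and applying the strong Markov property together with \eqref{gaussest} (or \eqref{gaussestother} in the boundary case $j=1$, which actually yields an even stronger rate $e^{-ck^2}$), the conditional probability that $J_t \leq \log u - k$ is bounded by $c\exp(-((a-q)(t-\sigma_j)+k)^2/(2(t-\sigma_j)))$. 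Multiplying and summing over $j$, the total exponent $2aqj + ((a-q)j+k)^2/(2j)$ is minimized over $j \geq 1$ at $j^* \asymp k/(a+q)$ with minimum value $2ak$, using the identity $(a-q)^2 + 4aq = (a+q)^2$; this gives the claimed bound on $\Prob_*(V_{-k})$.

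The main obstacle is the careful verification of \eqref{gaussest}'s hypotheses — particularly $t - \sigma_j < (\textrm{start} - \textrm{target})/q$ — uniformly in $(j,k)$, and the patching in of the $j = 1$ boundary case via \eqref{gaussestother} (giving the stronger but differently-shaped rate). The hypothesis $r > a+1$, equivalent to $q < a-1/2$ and hence $a - q > 1/2 > 0$, ensures that under $\Prob_*$ the process $J$ has enough positive drift toward the origin to make the Gaussian deviation estimates uniformly meaningful over the full ranges of $k$ and $j$ appearing in the optimization.
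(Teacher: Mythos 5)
Your proof is correct and follows the same underlying mechanism as the paper's, but with a genuinely different organization of the double sum and a cleaner computation of the critical exponent. Both proofs decompose $\tilde V$ over an (excursion time $j$, target shell $k$) grid, estimate the excursion probability by \eqref{add.5} and the drift-down probability by \eqref{gaussest}, and control the resulting sum; in fact your ``pigeonhole'' over $M_j=\max_{[t-j,t-j+1]}\cosh J_s$ reproduces exactly the paper's events $K_k$ from \eqref{kdef}. The difference is that you fix the target shell $k$, optimize explicitly over the excursion time $j$, and observe the clean algebraic identity $2aq+(a-q)^2/2=(a+q)^2/2$ to locate $j^*\asymp k/(a+q)$ and the minimum value $2ak$. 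The remaining sum over $k$ then converges because $r-2-2a<0$, which holds \emph{automatically} for every $r<r_c=2a+\tfrac12$. The paper instead sums first over the target shell (a Gaussian integral over the whole line), introduces an auxiliary parameter $\hat a\in(q,a)$ to absorb logarithmic corrections, and is left to verify the more opaque inequality $\xi<0$ in \eqref{arange} — a verification your organization sidesteps entirely, since the $\log j$ corrections you shrug off as ``polynomial factors in $k$'' genuinely are such (evaluating at $j^*$), and are absorbed by the geometric decay. In both treatments, the delicate points are the ones you flag: verifying the hypothesis $1\le t<(x-y)/q$ of \eqref{gaussest} uniformly (which is where $a-q>1/2$, i.e.\ $r>a+1$, enters) and patching the $j=1$ boundary case with \eqref{gaussestother}; neither proof spells these out in full. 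One small imprecision: for $u<1$ and $r>2$ the claim is trivial (LHS $\le 1\le u^{2-r-2q}$), not literally a consequence of Lemma~\ref{mar10.lem1} which is stated for $u\ge1$, but this is cosmetic.
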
 
 
\begin{proof} 
 
We use the notation from the proof of Lemma \ref{mar10.lem1} 
and let $V^j = V_{-j}$.  We 
note that \eqref{mar10.3} holds for all values of $r$.  Hence, 
we only need to show 
\[  \sum_{j=0}^\infty \E_*[ 
(\cosh J_t)^{{2-r} } \, 
    V^j] \leq c \, u^{2-r - 2q} .\]

Let 
$K_k$ be as in \eqref{kdef}. 
Then 
\[ \sum_{j=0}^\infty \E_*[ 
(\cosh J_t)^{{2-r} } \, 
    V^j] \leq \sum_{k=1}^\infty 
   \sum_{j=0}^\infty    \E_*[(\cosh J_t)^{2-r} \, 
       V^j \, K_k]. \] 
Note that 
\begin{align*} 
 u^{2q+r-2} \, \E_*[(\cosh J_t)^{2-r} \, 
          V^j \, K_k] & \asymp   u^{2q} e^{j(r-2)} 
  \, \Prob_*(V^j \cap K_k)\\&  =   
       u^{2q} \, \Prob_*(K_k) \, e^{j(r-2)} 
  \, \Prob_*(V^j \mid K_k).  
\end{align*} 
By \eqref{mar6.10}, we know that 
$u^{2q} \, \Prob_*(K_k) \leq c \, e^{-2aqk} \, k^{2q}$.  
Hence, it suffices to prove that 
\begin{equation}  \label{mar10.1} 
 \sum_{k=1}^\infty  e^{-2aqk} \, k^{2q} \sum_{j=0}^\infty 
        e^{j(r-2)} \, \Prob_*(V^j\mid K_k) < \infty .  
\end{equation}

Note that for $1+ a < r < r_c$, we have $a > q$ and  
  \[  -2 a q + \frac{(r-2)^2}{2}  +(2-r) \, 
             (a - q) < 0, \] 
Choose $\hat a$ satisfying $q < \hat a < a$ and 
\begin{equation} \label{arange} 
 -2 \hat a q + \frac{(r-2)^2}{2}  +(2-r) \, 
             (\hat a - q) < 0, 
\end{equation} 
 We will show  
\[  \sum_{k=1}^\infty  e^{-2\hat aqk}   \sum_{j=0}^\infty 
        e^{j(r-2)} \, \Prob_*(V^j\mid K_k) < \infty . \] 
On the event $K_k$, let $s$ be the largest number less than or equal 
to $k$ such that  
\[  \cosh J_{t-s}   
  \geq u e^{a(k-1)} \,(k+1)^{-1}. \] 
By the definition of $K_k,$ we know that $k-1 \leq  s \leq k$.  Also, 
we can find $c_1$ such that  
\[      J_{t-s} \geq \log u + a(k-1) - \log (k+1) 
  \geq \log u + \hat a k - c_1 + \log 2.\] 
On the event $V^j$, $\cosh J_t \leq e^{-j} u$ 
which implies 
\[        J_t  \leq \log u  -j  + \log 2. \] 
We estimate  $\Prob_*(V^j \mid K_k)$ from above by 
\[   \sup_{k-1 \leq s \leq k} 
            \Prob_*\left\{J_t \leq  \log u  -j + \log 2 
   \mid J_{t-s} =  \log u + \hat ak - c_1 + \log 2 \right\}. \] 
As in \eqref{gaussest}, this probability is bounded above by the corresponding 
probability if $J_t$ is a Brownian motion with drift $-q$.  
In particular, \eqref{gaussest} shows that 
there exists $M$ and $c_2$ such that 
for $j > M$, 
\begin{equation}  \label{mar10.5} 
 \Prob_*(V^j \mid K_k)  \leq 
 c \, \exp\left\{ - \frac{1}{2k}  \, [(\hat 
 a-q)k  +j - c_1]^2  
  \right\} . 
\end{equation} 
Note that 
\[  \sum_{k=1}^\infty  e^{-2\hat aqk}   \sum_{j=0}^{M } 
        e^{j(r-2)} \, \Prob_*(V^j\mid K_k) 
 \leq  \sum_{k=1}^\infty  e^{-2\hat aqk}   \sum_{j=0}^{M } 
        e^{j(r-2)}  
 < \infty . \] 
Also, \eqref{mar10.5} gives 
\[  
 \sum_{j=M+1}^{\infty } 
        e^{j(r-2)} \, \Prob_*(V^j\mid K_k) 
  \leq c \sum_{j=M+1}^\infty  
  \exp\left\{ j(r-2) - \frac{1}{2k}  \, [j+b]^2\right\} .\] 
where 
\[    b  = (\hat a-q)k  - c_2.\] 
We bound the right hand side from above by a constant 
times  
\begin{align*}     \int_{-\infty}^\infty e^{x(r-2)} 
               e^{-\frac{(x+b)^2}{2k}} \, dx &  
  =    e^{b(2-r)} \int_{-\infty}^\infty e^{y(r-2)} 
               e^{-\frac{y^2}{2k}} \, dy \\ 
& \leq  c \,  \sqrt k \,  \exp\left\{ b(2-r)   
 +  \frac{k(r-2)^2}{2} \right\} \\ 
 & \leq c \, \sqrt k \, \exp\left\{ (2-r)(\hat a - q)k   
 +  \frac{k(r-2)^2}{2} \right\}  
\end{align*} 
Hence 
\[   e^{-2\hat a qk} \,  \sum_{j={M+1}}^\infty  
  e^{j(r-2)}  
   \Prob_*(V^j \mid K_k) \leq C_\epsilon \, \sqrt k 
   \, e^{k \xi}\] 
where  
\[ \xi =  
 - 2 \hat a q   + (2-r)(\hat a - q)   
 +  \frac{ (r-2)^2}{2} .\] 
Recalling from  
 \eqref{arange} that $\xi < 0$, we conclude 
\[ \sum_{k=1}^\infty 
  e^{-2\hat a qk} \,  \sum_{j={M+1}}^\infty  
  e^{j(r-2)}  
   \Prob_*(V^j \mid K_k) <\infty . \] 
 \end{proof} 
 
\begin{Lemma}  Suppose $r < r_c$.  Then there exists $\theta 
< 2-\zeta$ and $c < \infty$ such that for all $u$ 
\begin{equation}  \label{mar10.6} 
  \E_*[(\cosh J_t)^{{2-r}} ; 
 u^2  \leq e^{-2at} \, \sigma(t) \leq 2u^2] 
 \leq c \, u^{\theta}.  
\end{equation} 
\end{Lemma}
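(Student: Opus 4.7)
The plan is to combine the preceding two lemmas and verify by direct arithmetic that the exponent they deliver is strictly less than $2-\zeta$ in every case. No new probabilistic input is needed; the statement is essentially the bookkeeping consolidation of Lemmas \ref{mar10.lem1} and \ref{mar10.lem2}, confirming that the heuristic ``$\E[N_t]=1$'' recorded before Lemma \ref{mar10.lem1} is not destroyed by the refined treatment needed when $r>2$.

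To set the stage, recall $q = r_c - r$, so $-2q = 2r-1-4a$, and
\[
2 - \zeta \;=\; 2 - r + \frac{r^{2}}{4a}.
\]
For $r > a+1$, Lemma \ref{mar10.lem2} applies and delivers exponent $\theta_{1} := 2 - r - 2q = 1 + r - 4a$; for $r \leq 2$, Lemma \ref{mar10.lem1} delivers this same exponent. One verifies unconditionally that
\[
2 - \zeta - \theta_{1} \;=\; 1 - 2r + \frac{r^{2}}{4a} + 4a \;=\; \frac{(r-4a)^{2} + 4a}{4a} \;>\; 0,
\]
so $\theta_{1} < 2-\zeta$ always.

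The only remaining range is $2 < r \leq a+1$, which is non-empty only when $a > 1$. Here only Lemma \ref{mar10.lem1} applies, with exponent $\theta_{2} := -2q = 2r-1-4a$, and one computes
\[
4a\,(2 - \zeta - \theta_{2}) \;=\; r^{2} - 12 a r + 16 a^{2} + 12 a.
\]
The discriminant of this quadratic in $r$ is $16 a(5a-3)$, which is negative for $a \leq 3/5$, so in that regime the quadratic is strictly positive everywhere. For $a > 3/5$, the two roots are $r^{\pm} = 6a \pm \sqrt{20 a^{2} - 12 a}$, and the identity $(5a-1)^{2} - (20 a^{2} - 12 a) = 5 a^{2} + 2a + 1 > 0$ yields $r^{-} > a + 1$. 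Hence the interval $(2, a+1]$ lies entirely to the left of $r^{-}$, so the quadratic is strictly positive there as well, giving $\theta_{2} < 2-\zeta$ on the relevant range.

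Putting it together, set
\[
\theta \;=\; \begin{cases} 1 + r - 4a & \text{if } r \leq 2 \text{ or } r > a+1, \\ 2r - 1 - 4a & \text{if } 2 < r \leq a+1, \end{cases}
\]
so that one of the two preceding lemmas supplies \eqref{mar10.6}, and the computations above give $\theta < 2-\zeta$. Lemma \ref{mar10.lem1} assumes $u \geq 1$, and the case $u < 1$ is handled trivially: the lower bound $\sigma(t) \geq (e^{2at}-1)/(2a)$ makes the event $\{u^{2} \leq e^{-2at}\sigma(t) \leq 2u^{2}\}$ empty for $u$ small relative to $t$, while for $u$ in a bounded sub-range of $(0,1]$ the expectation is absorbed into $c$. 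The only nontrivial step is the quadratic inequality verified above, whose success depends on the root estimate $r^{-} > a + 1$; this is the point where I would be most careful.
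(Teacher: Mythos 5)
Your proof is correct and follows the paper's approach: combine Lemmas \ref{mar10.lem1} and \ref{mar10.lem2}, take $\theta = 2-r-2q$ on the range where one of them delivers that exponent and $\theta=-2q$ on the gap $2<r\le a+1$, and verify by arithmetic that $\theta<2-\zeta$. Your verification is in fact tidier than the printed one: showing directly that the smaller root $r^-=6a-\sqrt{20a^2-12a}$ exceeds $a+1$ (via $(5a-1)^2-(20a^2-12a)=5a^2+2a+1>0$) immediately disposes of the gap $2<r\le a+1$, whereas the paper splits on $a\le 4$ versus $a>4$ and its intermediate claim that \eqref{crit} holds for all $r<r_c$ when $a\le 4$ is in fact false (at $a=3$ one has $2a[3-\sqrt{5-3/a}]=6<r_c=6.5$), though the paper's conclusion still stands since only $r\le a+1$ is needed. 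Your remark on the $u<1$ case, which Lemma \ref{mar10.lem1} excludes and the paper leaves implicit, is a point in your favor.
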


\begin{proof}  We set 
\[   \theta = \left\{\begin{array}{ll} 2-r-2q, & \mbox{ if } 
    r \leq 2 \mbox{ or } r \geq a + \frac 12 \\ 
        -2q & \mbox {otherwise}.  
\end{array} \right. \] 
The estimate \eqref{mar10.6} follows from the previous two 
lemmas so we only need to verify that $\theta < 2 - \zeta$. 
It is easy to see that $1-r-2q < 2 - \zeta$ for all $r$.  Also, 
one can 
check that $  -2q < 1-4a + r $ provided that $a < 3/5$ or 
$a \geq 3/5$ and  
\begin{equation}  \label{crit} 
            r < 2a \, \left[3 - \sqrt{5 - (3/a)}\right].  
\end{equation} 
If $a \leq 4$, then one can show that \eqref{crit} holds 
for all $r < r_c$.  Hence we only need to consider $a \geq 4$ 
and $r < r_c$ that do not satisfy \eqref{crit}.  Such an 
$r$ satisfies 
\[ r \geq  2a \, \left[3 - \sqrt{5 - (3/a)}\right] 
    \geq 2a \, [3 - \sqrt{17/4}] \geq  9a/5 . \] 
In particular, $r > a+1$. 
\end{proof}

\begin{proof}[Proof of Theorem \ref{mar11.theorem}]  
Let $\theta$ be as in the previous lemma.  Then by 
\eqref{mar10.6},  
\[ 
\E_*\left[ 
  (\cosh J_s) ^{{2-r}}  \, ;\,  
    e^{2at} \leq   
\sigma(s) \leq 2e^{2at}  \right]  
\leq  ce^{(t-s)a \theta}. 
\] 
Therefore, by  
 \eqref{add.7},  
\begin{align*} 
 \E\left[|h_{e^{2at}}'(i)|^\lambda   
  \right] 
 &\leq  c \, e^{-2at} \, \int_{0}^{t+ c} \,  
e^{2as} \, e^{-a s \zeta} \, e^{(t-s) a\theta} \,  
  ds  \\ 
 & \leq  c \, e^{-a\zeta t} \, \int_{0}^{t+c} 
e^{a(t-s)[-2+ \zeta + \theta]}  
   \,ds.\\ 
 & \leq    c \, e^{-a\zeta t} \, \int_{-c}^{\infty} 
e^{ay[-2+ \zeta + \theta]}  
   \,dy \leq c \, e^{-a\zeta t}.\\ 
\end{align*} 
The last inequality uses 
$  \theta < 2 - \zeta. 
  $

\end{proof}

\subsection{Proof of Lemma 
\ref{lowerderivative}}  \label{lowsec} 
 
We essentially follow the proof in \cite{Lawler_multifractal}. 
In that paper, it was assumed that $r \geq 0$, which we do 
not want to assume here, but with the upper bound of 
Theorem \ref{mar11.theorem}, we can do the argument for 
all $r < r_c$.  We emphasize that the postive recurrence 
of $J_t$ is critical for this argument and hence we need 
$q = r_c - r > 0$. 
Since $J_t$ is positive recurrent, we expect that $J_t = O(1)$ 
and that (approximately) 
\[    L_t =  \beta t + O(t^{1/2}). \] 
The next lemma gives an estimate of this type.

\begin{Lemma}\cite[Proposition 7.3]{Lawler_multifractal} 
  For each $u,t > 0$, let $E_{t,u}$ be the event that 
the following holds for all $0 \leq s \leq t$: 
\[           |J_s| \leq u \, \log\left[\min\{s+2 , t-s+2 \}\right], \] 
\[      |L_s - s\beta| \leq u \, \sqrt s \, \log(s+2) , \] 
\[       |L_t - L_s - (t-s) \beta| \leq 
            u \, \sqrt{t-s} \, \log(t-s+2) . \] 
Then, 
\[            \lim_{u \rightarrow \infty} 
                   \inf_{t > 0} \Prob_*(E_{t,u}) = 1 . \] 
\end{Lemma}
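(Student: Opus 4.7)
My plan is to handle the three conditions defining $E_{t,u}$ separately and then combine them by a union bound.  The two essential ingredients are the positive recurrence of $J$ under $\Prob_*$ with invariant density $v(x) \propto \cosh^{-2q} x$, and a Poisson/martingale representation of the ergodic integral $L_s - s\beta$.

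I would first establish $|J_s| \leq u\, \log[\min\{s+2, t-s+2\}]$.  For the left half $s \in [0, t/2]$, I partition into unit intervals $[k,k+1]$ and apply the tail estimate \eqref{add.5} at threshold $(k+2)^u$, obtaining a probability bound $c (k+2)^{-2qu}$, which is summable for $u > 1/(2q)$.  For the right half, I would use a Lyapunov/Foster drift argument with test function $\cosh^\epsilon x$ for small $\epsilon \in (0, 2q)$; a direct computation of $\mathcal{L}[\cosh^\epsilon x]$, where $\mathcal{L} = \tfrac{1}{2}\partial_x^2 - q \tanh x \, \partial_x$ is the generator of $J$ under $\Prob_*$, shows it is bounded above by $-c \cosh^\epsilon x$ outside a compact set.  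This gives exponential convergence of the law of $J_u$ to $v$, so for $t - k - 1 \geq 1$ the marginal of $J_{t-k-1}$ is uniformly comparable to $v$, and applying \eqref{add.5} on $[t-k-1, t-k]$ again yields probability $c(k+2)^{-2qu}$.  Summing in $k$ gives a total tending to zero as $u \to \infty$, uniformly in $t$.

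For the centered integral, set $g(x) = (1 - 2 \cosh^{-2} x) - \beta$, which has $\int g \, v = 0$.  I solve the Poisson equation $\mathcal{L} h = g$ explicitly: using integration against the speed measure, $h'(x) = c\, \cosh^{2q}(x) \int_{-\infty}^x g(y) v(y)\,dy$, and the mean-zero condition combined with the exponential decay of $v$ imply that $h'$ is bounded, so $h(x) = O(|x|)$.  Ito's formula then gives
\[
L_s - s\beta \;=\; \int_0^s g(J_r)\,dr \;=\; h(J_0) - h(J_s) + M_s,
\]
where $M_s$ is a martingale whose quadratic variation is bounded by a constant times $s$.  On the event already controlling $|J_s|$, the boundary term $|h(J_s)|$ is only logarithmic in $s$ and $t$, and Doob's $L^2$ maximal inequality applied on a dyadic decomposition of $[0,t]$ produces the desired $u\sqrt{s}\log(s+2)$ bound with probability approaching $1$ as $u \to \infty$.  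The third bound is handled identically by applying the same martingale argument to the Markov process $(J_{s+r})_{r \geq 0}$ conditionally on $\mathcal{F}_s$, with a dyadic decomposition of $[s, t]$.

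The main obstacle is the uniformity in $t$, specifically the $\log(t-s+2)$ tail near the right endpoint $s = t$: the usual one-sided tail estimates on $J$ starting from $J_0 = 0$ do not directly see the "reverse-time" decay.  This is precisely what forces the exponential mixing estimate from the Lyapunov argument, which transfers the stationary tails of $v$ to the non-stationary distribution of $J_u$ with only multiplicative constants.  Once mixing is in place, the rest is bookkeeping: sum the geometric-type tails over the unit intervals near both endpoints, combine the three events via a union bound, and observe that every bound is independent of $t$, so the resulting probability of $E_{t,u}^c$ is bounded uniformly by a quantity tending to zero as $u \to \infty$.
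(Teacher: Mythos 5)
The paper does not prove this lemma; it is imported verbatim from \cite[Proposition 7.3]{Lawler_multifractal}, so there is no in-paper argument to compare against. Evaluated on its own, your sketch---tail bounds for $J$ from \eqref{add.5}, the Poisson equation $\mathcal{L}h = g$ for $g(x) = 1 - 2\cosh^{-2}x - \beta$ giving the It\^o decomposition $L_s - s\beta = h(J_s) - h(J_0) - M_s$ with $h'$ bounded, and Doob's $L^2$ maximal inequality on dyadic blocks---is the standard martingale route to ergodic bounds of this type, and it should close. Your verification that $h'$ is bounded is correct: writing $h'(x) = 2\cosh^{2q}(x)\int_{-\infty}^x \cosh^{-2q}(y)g(y)\,dy$ and using $\int g\,v = 0$ together with the $e^{-2q|y|}$ decay gives $h'(\pm\infty) = \mp(1-\beta)/q$, so $h$ is even (with $h(0)=0$) and $|h(x)| = O(|x|)$.

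Two execution points are worth flagging. First, the Lyapunov/Foster step for the right endpoint is superfluous: the estimate \eqref{add.5} is stated with a constant \emph{uniform} over the location $k$ of the unit window $[k,k+1]$, so you may apply it directly on $[t-k-1, t-k]$ with threshold $\asymp (k+2)^u$ exactly as you do on $[k,k+1]$; the mixing argument would be needed only to reprove \eqref{add.5} from scratch. Second, the third inequality is the one you treat most cursorily, and it is precisely where uniformity over $s$ requires care: a ``dyadic decomposition of $[s,t]$'' for each $s$ is a continuum of decompositions. The clean version is to index by $j$ with $t-s \in [2^j, 2^{j+1})$, note $|M_t - M_s| \le 2\sup_{t-2^{j+1}\le r \le t}|M_r - M_{t-2^{j+1}}|$, apply Doob's $L^2$ inequality to the forward martingale increment over $[t-2^{j+1}, t]$, and sum the resulting $O(u^{-2}j^{-2})$ tails over $j \le \log_2 t$. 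Combined with the trivial bound $|L_t - L_s - (t-s)\beta| \le \|g\|_\infty(t-s)$ for $t-s\lesssim 1$, and the observation that on the first event $|h(J_t) - h(J_s)| = O(u\log(t-s+2))$ because $|J_t| \le u\log 2$ and $|J_s| \le u\log(t-s+2)$ there, this yields the third bound with a probability defect $O(u^{-2})$ uniform in $t$. (The sign in your displayed It\^o identity is flipped, which is harmless.)
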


By integrating, we can see there exists $c_0$ and 
$ c(u)$ such that for all 
$t \geq 1$, on the event  $E_{t,u}$ 
\begin{equation}  \label{add.6} 
     c_0 \, e^{2at} \leq  \sigma(t) \leq c(u) \, e^{2at}.  
\end{equation} 
Note that the lower bound does not depend on $u$ and follows from 
the estimate $\cosh^2 J_s \geq 1$.  An event such as this 
is used to define the event $E_t = E_{t,u}$ in Lemma 4.2 where 
$u$ is sufficiently large so that 
\[    \Prob_*(E_t) = \E\left[N_t \, 1_{E_t}\right] \geq \frac 
 12. \] 
Note that on the event $E_t$,  
\[                |h_{\sigma(t)}'(i)| \asymp 
                       e^{at \beta}, \] 
and using \eqref{add.6}, one can show that $|h_{e^{2at}}'(i)| 
  \asymp 
 (e^{at})^\beta$.  In fact, there is a  function 
$\psi$ with $\psi(s) = o(s)$ as  
$s \rightarrow \infty$, 
 such that for all $0 \leq s \leq t$, 
\[          e^{a \beta s - \psi(s)} 
 \leq       |h_{\sigma(s)}'(i)| \leq e^{a\beta s +\psi(s)}, \] 
\[   e^{a \beta(t-s) - \psi(t-s)} \leq 
    \left| \frac{h_{\sigma(t)}'(i)}{h_{\sigma(s)}'(i)}\right| 
  \leq e^{a\beta(t-s) + \psi(t-s)}.\] 
As in \eqref{add.6}, we can see that this implies (with 
perhaps a different $o(s)$ function $\psi$) 
 \[          e^{a \beta s - \psi(s)} 
 \leq       |h_{e^{2as}}'(i)| \leq e^{a\beta s +\psi(s)}, \] 
\[   e^{a \beta(t-s) - \psi(t-s)}\,  
\left|{h_{e^{2as}}'(i)}\right| 
  \leq 
    \left| {h_{e^{2at}}'(i)}\right| 
  \leq e^{a\beta(t-s) + \psi(t-s)}\,  
\left|{h_{e^{2as}}'(i)}\right| 
.\] 
We get 
\begin{Proposition} \label{mar12.prop1} 
 If $r < r_c$, there is a $c< \infty$ 
and a  subpower function 
$\phi$ such that if $E_t = E_{t,\beta,\psi}$ denotes the event that for 
$1 \leq s \leq t$,  
\begin{equation}  \label{mar12.1} 
               s^\beta \, \phi(s)^{-1} 
  \leq   |h'_{s^2}(i)| \leq  s^\beta \, \phi(s) ,  
\end{equation} 
\begin{equation}  \label{mar12.2} 
      (t/s)^\beta \, \phi(t/s)^{-1} 
      \, |h'_{s^2}(i)| \leq |h_{t^2}'(i)| \leq (t/s)^\beta \,  
   \phi(t/s)  \, |h'_{s^2}(i)|  ,  
\end{equation} 
then 
\[                  \E\left[|h_{t^2}'(i)|^\lambda 
  \, 1_{E_t}\right]  \geq c . \] 
\end{Proposition}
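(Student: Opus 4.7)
The plan is to transfer the uniform lower bound $\Prob_*(E_{T,u}) \geq 1/2$ supplied by the preceding lemma (for $u$ sufficiently large) back to the original measure $\Prob$ via the Radon--Nikodym identity $\Prob_*(\cdot) = \E[N_T\,1_{(\cdot)}]$. The matching choice of time is $T = (\log t)/a$, so that $e^{2aT} = t^2$ and the SDE event at horizon $T$ corresponds to the Loewner-time event at horizon $t^2$.

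The first step is to verify the inclusion $E_{T,u} \subset E_t$ for a suitable subpower $\phi$. On $E_{T,u}$ the pointwise bounds on $|J_s|$ and on $|L_s - s\beta|$ combine, via the identifications $|h'_{\sigma(s)}(i)| = e^{aL_s}$ and $Y_{\sigma(s)} = e^{as}$, to give $|h'_{\sigma(s)}(i)| = e^{a\beta s}\,e^{\pm \psi(s)}$ and $\sigma(s) \asymp e^{2as}$ for $0 \leq s \leq T$. The distortion theorem (the reverse-flow analog of Lemma \ref{lemma34}) then exchanges $|h'_{\sigma(s)}(i)|$ with $|h'_{e^{2as}}(i)|$ at the cost of another subpower factor; this is precisely the content of the two displays immediately preceding the proposition. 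Reparameterizing by $u = e^{as}$ produces \eqref{mar12.1} and \eqref{mar12.2} for all $1 \leq u \leq t$ with $\phi$ depending only on $u$ and $\psi$, so $E_{T,u} \subset E_t$.

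The second step is to unpack the change-of-measure identity. The three factors of
\[ N_T = |h'_{\sigma(T)}(i)|^\lambda\,e^{aT\zeta}\,S_{\sigma(T)}^{-r} \]
are estimated on $E_{T,u}$ as follows: $e^{aT\zeta} = t^\zeta$ exactly; $S_{\sigma(T)}^{-r} = (\cosh J_T)^r$ is bounded by a subpower function of $t$ because $|J_T|$ is uniformly bounded on $E_{T,u}$; and $|h'_{\sigma(T)}(i)|^\lambda$ is comparable to $|h'_{t^2}(i)|^\lambda$ up to a subpower factor, since $\sigma(T) \asymp t^2$ on $E_{T,u}$ and the reverse-flow distortion estimate controls the variation of $|h'|$ across comparable times. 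Substituting the bounds into
\[ \tfrac{1}{2} \;\leq\; \Prob_*(E_{T,u}) \;=\; \E\bigl[N_T\,1_{E_{T,u}}\bigr] \]
and using the inclusion $E_{T,u} \subset E_t$ yields the desired lower bound on $\E[|h'_{t^2}(i)|^\lambda\,1_{E_t}]$ in the normalization prescribed by the proposition.

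The principal obstacle is the comparison between $|h'_{\sigma(T)}(i)|$ and $|h'_{t^2}(i)|$ in the second step. Because $\sigma(T)$ is only constrained to lie in an interval $[c_0 t^2, c(u) t^2]$, the two derivatives could a priori differ by an unbounded multiplicative factor. The resolution is that on $E_{T,u}$ the process $J_s$ stays bounded for $s$ close to $T$, so $Y_s$ stays bounded away from $0$ and $\infty$ near $\sigma(T)$, and the reverse-flow distortion theorem then provides a subpower comparison constant. Since $\phi$ is allowed to be any subpower function, this is exactly what the proposition tolerates.
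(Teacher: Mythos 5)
Your argument is correct and is essentially the paper's own (the paper only sketches this proof in the displays preceding the proposition): choose $u$ with $\Prob_*(E_{T,u})\ge 1/2$ uniformly in $T$, check $E_{T,u}\subset E_t$ via the identification of $e^{aL}$, $e^{a\eta}$, $\cosh J$ with $|h'|$, $Y$, $S^{-1}$, and unfold $\E[N_T 1_{E_{T,u}}]\ge 1/2$ factor by factor, handling the $\sigma(T)$-versus-$t^2$ discrepancy by noting that the elapsed Loewner time is $O(Y_{\sigma(T)}^2)$ so the distortion-in-time estimate gives a bounded comparison constant. Note that your computation actually produces the bound $\E\left[|h_{t^2}'(i)|^{\lambda}1_{E_t}\right]\ge c\,t^{-\zeta}$ (the exact factor $e^{aT\zeta}=t^{\zeta}$ gets divided out), which is the normalization actually used to derive \eqref{feb2.1.1}; the ``$\ge c$'' in the printed statement appears to omit the $t^{-\zeta}$.
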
 
 
Note that  
\[       |h_{t^2}'(i)|^\lambda 
  \, 1_{E_t} \leq |h_{s^2}'(i)|^\lambda  \, (t/s)^{\beta\lambda}   
  \, \phi(t/s)^\lambda.\]    
By the definition of the event, 
we also get $Y_{e^{2at}} \asymp e^{at}$ and $S_{e^{2at}} \asymp 
1$, so that 
\[            \E\left[|h_{e^{2at}}'(i)|^\lambda \right] 
      \asymp [e^{ at}]^{-\zeta}. \] 
This is how \eqref{feb2.1} and \eqref{feb2.1.1} are derived. 

%To get \eqref{feb2.1.2} one notes that 
%correlations at times $s$ and $s+t$ for $\hat f$ correspond to 
%correlations at times $s$ and $s+t$ for $h$. 
 
To get \eqref{feb2.1.2} we have to estimate the
 correlations. Suppose that $S,T$ are positive 
integers with $n^2/2 \leq S \leq  T \leq n^2$.   Let 
$h^{(S)},  h^{(T)}$ be defined as in the beginning of
the section. Recall that $\hat f_S'(z) \, \hat f'_T(z)$ has the same
distribution as $(h_S^{(S)})'(z)(h_T^{(T)})'(z)$. Set $h:=h^{(S)}$ and
$\tilde h:=h^{(T)}$. Let $E(S) , E(T)$ denote 
the indicator function of the event from Proposition  
\ref{mar12.prop1} with $t=n$ and with $h$ being 
$h$ or $\tilde h$, respectively.  We have already noted 
that $\tilde h_t, 0 \leq t \leq T-S$ is independent 
of $h_s, 0 \leq s \leq S$.   
Using \eqref{mar12.2} we see that 
\[     |\tilde h_{T}'(i)|^\lambda \, E(T) 
    \leq |\tilde h_{T-S}'(i)|^\lambda   
  \,  \left(\frac{n^2}{T-S+1}\right)^{\frac {\beta\lambda} 
   2} \, 
   \phi\left(\frac{n^2}{T-S+1}\right). \] 
(Here we use $\phi$ for a subpower function whose exact 
value may change from line to line.) 
The random variable on the right hand side is independent of 
$h$.  Therefore, using Theorem \ref{mar11.theorem}, 
\begin{align*} 
\lefteqn{ \E\left[|h_{S}'(i)|^\lambda \, |\tilde h_{T}'(i)| 
    ^\lambda\, E(T) \right]} \\ 
 & \leq         \E\left[|h_{S}'(i)|^\lambda \right] 
  \, \E\left[|\tilde h_{T-S}'(i)|^\lambda \right] \, 
      \left(\frac{n^2}{T-S+1}\right)^{\frac{\beta\lambda}2} \, 
   \phi\left(\frac{n^2}{T-S+1}\right)\\ 
& \leq  c \, n^{-\zeta} \, (T-S+1)^{-\zeta/2} 
  \,  \left(\frac{n^2}{T-S+1}\right)^{\frac{\beta\lambda}2} \, 
   \phi\left(\frac{n^2}{T-S+1}\right)\\ 
& =  c \, n^{-2\zeta} \,  
 \left(\frac{n^2}{T-S+1}\right)^{\frac{ 
 \beta\lambda+ \zeta} 2}  
\phi\left(\frac{n^2}{T-S+1}\right),
\end{align*} 
and \eqref{feb2.1.2} follows.

\bibliography{JL}
\end{document}